\pgfplotsset{compat=newest}
\definecolor{r}{rgb}{1, 0, 0} 
\definecolor{b}{rgb}{0, 0, 1} 
\definecolor{m}{rgb}{1, 0, 1} 
\definecolor{dg}{rgb}{0.4660, 0.6740, 0.1880} 
\definecolor{dy}{rgb}{0.9290, 0.6940, 0.1250}
\theoremstyle{plain}
\newtheorem{rem}[theorem]{Remark}
\newtheorem{assumption}[theorem]{Assumption}
\newtheorem{example}[theorem]{Example}
\newcommand{\apprlevy}{{(\varepsilon_l)}}
\newcommand{\apprgrf}{{(\varepsilon_W)}}
\newcommand{\cD}{\mathcal{D}}
\numberwithin{equation}{section} \allowdisplaybreaks[1]
\definecolor{darkgreen}{rgb}{0,.6,0}
\title{On properties and applications of Gaussian subordinated Lévy fields}
\author{Robin Merkle \thanks{IANS\textbackslash SimTech, University of Stuttgart
(\email{robin.merkle@mathematik.uni-stuttgart.de}).},
\and Andrea Barth \thanks{IANS\textbackslash SimTech, University of Stuttgart  
	(\email{andrea.barth@mathematik.uni-stuttgart.de}).}}
\begin{document}
		
	\maketitle
		
	\begin{abstract}
We consider Gaussian subordinated Lévy fields (GSLFs) that arise by subordinating Lévy processes with positive transformations of Gaussian random fields on some spatial domain $\mathcal{D}\subset \mathbb{R}^d$, $d\geq 1$. The resulting random fields are distributionally flexible and have in general discontinuous sample paths. Theoretical investigations of the random fields include pointwise distributions, possible approximations and their covariance function. As an application, a random elliptic PDE is considered, where the constructed random fields occur in the diffusion coefficient. Further, we present various numerical examples to illustrate our theoretical findings.
	\end{abstract}
	
	\begin{keywords}
Subordination, L\'evy fields, pointwise distribution, covariance function, random elliptic PDEs
	\end{keywords}

	\begin{AMS}
		60G15, 60E07, 60G60, 60E10,  35R60
	\end{AMS}

	
	\section{Introduction}\label{sec:Intro}
Ample applications that are modelled stochastically require stochastic processes or random fields, which allow for discontinuities or possess higher distributional flexibility than the standard Gaussian model (see for example \cite{HosseiniWellPosedBayesianInverseProblems}, \cite{LevyProcessesInFinance} and \cite{Zhang2004-dt}). In case of a one-dimensional parameter domain, where the parameter often represents time, Lévy processes are widely used (see e.g. \cite{LevyProcessesInFinance} for some applications in finance). For higher-dimensional parameter domains, some extensions of the Gaussian model have been proposed in the literature: The authors of \cite{Gottschalk2021} consider (smoothed) Lévy noise fields with high distributional flexibility and continuous realizations in the context of random PDEs. In \cite{AStudyOfElliptic}, the authors propose a general random field model which allows for spatial discontinuities with flexible jump geometries. However, in its general form, it not easy to investigate theoretical properties of the random field itself. In the context of Bayesian inversion, the level set approach combined with Gaussian random fields is often used as a discontinuous random field model (see, e.g. \cite{MR3549006}, \cite{MR4119332} and \cite{MR3687326}). The distributional flexibility of the resulting model is, however, again restricted since the stochasticity is governed by the Gaussian field.  Another extension of the Gaussian model has been investigated in the recent paper \cite{SGRF}. The construction is motivated by the subordinated Brownian motion, which is a Brownian motion time-changed by a Lévy subordinator (i.e. a non-decreasing Lévy process). As a generalization, the authors consider Gaussian random fields on a higher-dimensional parameter domain subordinated by several independent Lévy subordinators. The constructed random fields allow for spatial jumps and display great distributional flexibility. However, the jump geometry is restricted in the sense that the jump interfaces are always rectangular.

In this paper, we investigate another specific class of discontinuous random fields: the Gaussian subordinated Lévy fields (GSLF). Motivated by the subordination of standard Lévy processes, the GSLF is constructed by subordination of a general real-valued Lévy process by a transformation of a Gaussian random field. It turns out that the resulting fields allow for spatial discontinuities with flexible jump geometries, are distributionally flexible and relatively easy to simulate. Besides a theoretical investigation of the constructed random fields, we present numerical examples and introduce a possible application of the fields in the diffusion coefficient of an elliptic model PDE. Such a problem arises, for instance, in models for subsurface flow in heterogeneous/porous media (see \cite{AStudyOfElliptic}, \cite{Dagan1991}, \cite{Naff1998} and the references therein).

The rest of the paper is structured as follows: In Section \ref{sec:prelim} we shortly introduce Lévy processes and Gaussian random fields, which are crucial for the definition of the GSLF in Section \ref{sec:GaussianSubordLP}. The pointwise distribution of the random fields are investigated in Section \ref{sec:Chracteristicfunction}, where we derive a formula for its (pointwise) characteristic function. Section \ref{sec:approximation} deals with numerical approximations of the GSLF, including an investigation of the approximation error and the pointwise distribution of the approximated fields. Our theoretical investigations are concluded in Section \ref{sec:Covariance}, where we derive a formula for the covariance function. In Section \ref{sec:numerics}, we present various numerical examples which validate and illustrate the theoretical findings of the previous sections. Section \ref{sec:GSLPellPDE} concludes the paper and highlights the application of the GSLF in the context of random elliptic PDEs, exploring theoretical as well as numerical aspects.

	
	\section{Preliminaries}\label{sec:prelim}
In the following section, we give a short introduction to Lévy processes and Gaussian random fields following \cite{SGRF} since they are crucial elements for the construction of the Gaussian subordinated Lévy field. For more details we refer the reader to \cite{RandomFieldsAndGeometry, LevyProcessesAndInfinitelyDivisibleDistributions, LevyProcessesAndStochasticCalculus}. Throughout the paper, we assume that $(\Omega,\mathcal{F},\mathbb{P})$ is a complete probability space. 

\subsection{Lévy processes}
We consider an Borel-measurable index set $\mathcal{T}\subseteq \mathbb{R}_+:=[0,+\infty)$. A \textit{stochastic process} $X=(X(t),~t\in \mathcal{T})$ on $\mathcal{T}$ is a family of random variables on the probability space $(\Omega,\mathcal{F},\mathbb{P})$.  
\begin{definition}
A stochastic process $l$ on $\mathcal{T}=[0,+\infty)$ is said to be a \textit{Lévy process} if 
\begin{enumerate}
\item $l(0)=0~\mathbb{P}-a.s.$,
\item $l$ has independent increments, i.e. for each $0\leq t_1\leq t_2\leq \dots \leq t_{n+1}$ the random variables $(l(t_{j+1}) -l(t_j),~1\leq j\leq n)$ are mutually independent,
\item $l$ has stationary increments, i.e. for each $0\leq t_1\leq t_2\leq \dots \leq t_{n+1}$ it holds $l(t_{j+1}) - l(t_j) \stackrel{\cD}{=} l(t_{j+1}-t_j) - l(0)\stackrel{\cD}{=} l(t_{j+1}-t_j)$, where $\stackrel{\cD}{=}$ denotes equivalence in distribution,
\item $l$ is stochastically continuous, i.e. for all $a>0$ and for all $s\geq 0$ it holds 
\begin{align*}
\underset{t\rightarrow s}{\lim}\, \mathbb{P}(|l(t)-l(s)|>a)=0.
\end{align*}
\end{enumerate}
\end{definition} 	
 The well known \textit{Lévy-Khinchin formula} yields a parametrization of the class of Lévy processes by the so called \textit{Lévy triplet} $(\gamma,b,\nu)$.
\begin{theorem}(Lévy-Khinchin formula, see \cite[Th. 1.3.3 and p. 29]{LevyProcessesAndStochasticCalculus})\label{TH:LevyKhinchinFormula1d}
Let $l$ be a real-valued Lévy process on $\mathcal{T}\subseteq \mathbb{R}_+:=[0,+\infty)$. There exist parameters $\gamma\in \mathbb{R}$, $b\in \mathbb{R}_+$ and a measure $\nu$ on $\mathbb{R}$ such that the characteristic function $\phi_{l(t)}$ of the Lévy process $l$ admits the representation
\begin{align*}
\phi_{l(t)}(\xi) := \mathbb{E}(\exp(i\xi l(t))) = \exp(t\psi(\xi)),~\xi\in \mathbb{R},
\end{align*}
for $t\in \mathcal{T}$. Here, $\psi$ denotes the characteristic exponent of $l$ which is given by 
\begin{align*}
\psi(\xi) = i\gamma \xi - \frac{b}{2}\xi^2 + \int_{\mathbb{R}\setminus\{0\}}e^{i\xi y}-1-i\xi y\mathds{1}_{\{|y|\leq 1\}}(y)\nu(dy),~\xi\in\mathbb{R}.
\end{align*}
Further, the measure $\nu$ satisfies
\begin{align*}
\int_\mathbb{R}	 \min(y^2,1) \,\nu(dy)<\infty,
\end{align*}
and is called \textit{Lévy measure} and $(\gamma,b,\nu)$ \textit{L\'evy triplet}.
\end{theorem}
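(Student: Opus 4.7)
The plan is a two-step reduction: first I would establish the exponential form $\phi_{l(t)}(\xi) = \exp(t\psi(\xi))$ via a semigroup argument in $t$, and then identify $\psi$ by invoking the classical Lévy--Khinchin representation for infinitely divisible distributions applied to the law of $l(1)$.

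For the semigroup step, fix $\xi \in \mathbb{R}$ and set $f_\xi(t) := \phi_{l(t)}(\xi)$. Independent and stationary increments give that $l(s+t) - l(s)$ is independent of $l(s)$ and distributed as $l(t)$, hence
\[
f_\xi(s+t) = \mathbb{E}\bigl[e^{i\xi l(s)} e^{i\xi (l(s+t)-l(s))}\bigr] = f_\xi(s)\, f_\xi(t),
\]
with $f_\xi(0)=1$, and stochastic continuity makes $t \mapsto f_\xi(t)$ continuous. To take logarithms I need non-vanishing: the decomposition $l(1) = \sum_{k=1}^n (l(k/n) - l((k-1)/n))$ as a sum of $n$ i.i.d. copies of $l(1/n)$ yields $\phi_{l(1)}(\xi) = \phi_{l(1/n)}(\xi)^n$, and $\phi_{l(1/n)}(\xi) \to 1$ pointwise by stochastic continuity, forcing $\phi_{l(1)}(\xi) \neq 0$ for every $\xi$; the same reasoning shows $f_\xi(t) \neq 0$ for all $t \geq 0$. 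A continuous, non-vanishing, multiplicative map on $[0,\infty)$ starting at $1$ is of the form $\exp(t\psi(\xi))$ for some $\psi(\xi) \in \mathbb{C}$, with $\psi$ continuous in $\xi$ by continuity of $\phi_{l(1)}$.

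The same decomposition also exhibits $l(1)$ as infinitely divisible. Invoking the Lévy--Khinchin representation for infinitely divisible laws on $\mathbb{R}$ then yields unique parameters $\gamma \in \mathbb{R}$, $b \geq 0$, and a measure $\nu$ on $\mathbb{R} \setminus \{0\}$ with $\int_{\mathbb{R}} \min(y^2,1)\,\nu(dy) < \infty$ such that $\psi(\xi) = \log f_\xi(1)$ admits the stated integral form; combining this with the semigroup step yields $\phi_{l(t)}(\xi) = \exp(t\psi(\xi))$.

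The main obstacle sits outside this reduction: it is the Lévy--Khinchin representation for infinitely divisible distributions itself, whose proof approximates the law of $l(1)$ by compound Poisson distributions obtained by truncating small jumps, establishes tightness of the resulting triplets, and passes to a weak limit while using the compensator $i\xi y\,\mathds{1}_{\{|y|\leq 1\}}(y)$ to secure integrability of $\nu$ near the origin. Since this representation is a quoted result of the cited references (notably Applebaum), the proof here can rely on it directly, and only the semigroup identification of the first step needs to be carried out in detail.
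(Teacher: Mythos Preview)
Your outline is a correct and standard route to the Lévy--Khinchin formula: the multiplicative semigroup argument to obtain $\phi_{l(t)}(\xi)=\exp(t\psi(\xi))$, followed by an appeal to the representation theorem for infinitely divisible laws applied to $l(1)$, is exactly how the result is derived in the references you name.

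However, there is nothing in the paper to compare against. The paper does not prove this theorem at all; it is stated as a quoted result with the citation ``see \cite[Th.~1.3.3 and p.~29]{LevyProcessesAndStochasticCalculus}'' and used as a black box throughout. So your proposal is not so much an alternative to the paper's proof as it is a sketch of the proof the paper deliberately omits. In that sense your write-up goes beyond what the paper requires, and the one caveat you yourself flag --- that the heavy lifting sits in the infinitely divisible representation, which you also ultimately cite rather than prove --- means your argument and the paper's treatment rest on the same external reference.
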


\subsection{Gaussian random fields}\label{Subsec:GRF}
A random field defined over the Borel set $\mathcal{D}\subset \mathbb{R}^d$ is a family of real-valued random variables on the probability space $(\Omega,\mathcal{F},\mathbb{P})$ parametrized by $\underline{x}\in\mathcal{D}$. The Gaussian random field  defines an important example.	
	\begin{definition}(see \cite[Sc. 1.2]{RandomFieldsAndGeometry})
	A random field $(W(\underline{x}),~\underline{x}\in\mathcal{D})$ on a $d$-dimensional domain $\mathcal{D}\subset \mathbb{R}^d$ is said to be a \textit{Gaussian random field (GRF)} if, for any $\underline{x}^{(1)},\dots,\underline{x}^{(n)} \in \mathcal{D}$ with $n\in \mathbb{N}$, the $n$-dimensional random variable $(W(\underline{x}^{(1)}),\dots,W(\underline{x}^{(n)}))$ follows a multivariate Gaussian distribution. In this case, we define the \textit{mean function} $\mu_W(\underline{x}^{(i)}):=\mathbb{E}(W(\underline{x}^{(i)}))$ and the covariance function $q_W(\underline{x}^{(i)},\underline{x}^{(j)}):=\mathbb{E}((W(\underline{x}^{(i)})-\mu_W(\underline{x}^{(i)}))(W(\underline{x}^{(j)})-\mu_W(\underline{x}^{(j)})))$, for $\underline{x}^{(i)},~\underline{x}^{(j)}\in \mathcal{D}$. The GRF $W$ is called centered, if $\mu_W(\underline{x})=0$ for all $\underline{x}\in \mathcal{D}$. 
	\end{definition}
	Note that every GRF is determined uniquely by its mean and covariance function. The GRFs considered in this paper are assumed to be mean-square continuous, which is a common assumption (cf. \cite{RandomFieldsAndGeometry}). We denote by $Q:L^2(\mathcal{D})\rightarrow L^2(\mathcal{D})$ the \textit{covariance operator} of $W$ which is defined by 
\begin{align*}
Q(\psi)(\underline{x})=\int_{\mathcal{D}}q_W(\underline{x},\underline{y})\psi(\underline{y})d\underline{y} \text{, for } \underline{x}\in \mathcal{D},
\end{align*}
for $\psi \in L^2(\mathcal{D})$. Here, $L^2(\mathcal{D})$ denotes the Lebesgue space of all square integrable functions over $\mathcal{D}$ (see, for example, \cite{SobolevSpaces}). If $\mathcal{D}$ is compact, it is well known that there exists a decreasing sequence $(\lambda_i,~i\in \mathbb{N})$ of real eigenvalues of $Q$ with corresponding eigenfunctions $(e_i,~i\in\mathbb{N})\subset L^2(\mathcal{D})$ which form an orthonormal basis of $L^2(\mathcal{D})$ (see \cite[Section 3.2]{RandomFieldsAndGeometry} and \cite[Theorem VI.3.2 and Chapter II.3]{Funktionalanalysis}). A GRF $W$ is called \textit{stationary} if the mean function $\mu$ is constant and the covariance function $q_W(\underline{x}^{(1)},\underline{x}^{(2)})$ only depends on the difference $\underline{x}^{(1)}-\underline{x}^{(2)}$ of the values $\underline{x}^{(1)},~\underline{x}^{(2)}\in \mathcal{D}$ and a stationary GRF $W$ is called \textit{isotropic} if the covariance function $q_W(\underline{x}^{(1)},\underline{x}^{(2)})$ only depends on the Euclidean length $\|\underline{x}^{(1)}-\underline{x}^{(2)}\|_2$ of the difference of the values $\underline{x}^{(1)},~\underline{x}^{(2)}\in \mathcal{D}$ (see \cite{RandomFieldsAndGeometry}, p. 102 and p. 115).

\begin{example}
The Matérn-GRFs are a class of continuous GRFs which are commonly used in applications. For a certain smoothness parameter $\nu > 1/2$, correlation parameter $r>0$ and variance $\sigma^2>0$, the Matérn-$\nu$ covariance function is defined by $q_M(\underline{x},\underline{y})=\rho_M(\|\underline{x}-\underline{y}\|_2)$, for $(\underline{x},\underline{y})\in\mathbb{R}_+^d\times \mathbb{R}_+^d$, with 
\begin{align*}
\rho_M(s) = \sigma^2 \frac{2^{1-\nu}}{\Gamma(\nu)}\Big(\frac{2s\sqrt{\nu}}{r}\Big)^\nu K_\nu\Big(\frac{2s\sqrt{\nu}}{r}\Big), \text{ for }s\geq 0.
\end{align*}
Here, $\Gamma(\cdot)$ is the Gamma function and $K_\nu(\cdot)$ is the modified Bessel function of the second kind (see \cite[Section 2.2 and Proposition 1]{QuasiMonteCarloFEMethodsForEllipticPDEsWithLognormalRandomCoefficients}). A Matérn-$\nu$ GRF is a centered GRF with covariance function $q_M$.
\end{example}


\section{The Gaussian subordinated Lévy field}\label{sec:GaussianSubordLP} In this section we define Gaussian subordinated Lévy fields. Since their construction is motivated by the subordination of standard Lévy processes we shortly repeat this procedure: If $l$ denotes a Lévy process and $S$ denotes a Lévy subordinator (i.e. a non-decreasing Lévy process), which is independent of $l$, the time-changed process
\begin{align*}
t\mapsto l(S(t)),~t\geq 0,
\end{align*}
is called subordinated Lévy process. It can be shown that this process is again a Lévy process (cf. \cite[Theorem 1.3.25]{LevyProcessesAndStochasticCalculus}).

In order to construct the GSLF we consider a domain $\mathcal{D}\subset \mathbb{R}^d$ with $1\leq d\in\mathbb{N}$. Let $l=(l(t),~t\geq 0)$ be a Lévy process, $W:\Omega\times\mathcal{D}\rightarrow\mathbb{R}$ be an $\mathcal{F}\otimes\mathcal{B}(\mathcal{D})$-measurable GRF  which is independent of $l$ and $F:\mathbb{R}\rightarrow \mathbb{R_+}$ be a measurable, non-negative function. The \textit{Gaussian subordinated Lévy field} is defined by 
\begin{align*}
L(\underline{x}) = l(F(W(\underline{x}))), \text{ for } \underline{x}\in \mathcal{D}.
\end{align*}
Note that assuming the GRF $W$ to have continuous paths is sufficient to ensure joint measurability (see \cite[Lemma 4.51]{InfiniteDimensionalAnalysis}). Since the Lévy process $l$ is in general discontinuous, the GSLF $L$ has in general discontinuous paths. This is demonstrated in Figure \ref{FIG:SamplesGSLP}, which shows samples of the GSLF. 
\begin{figure}[ht]
	\centering	\subfigure{\includegraphics[scale=0.19]{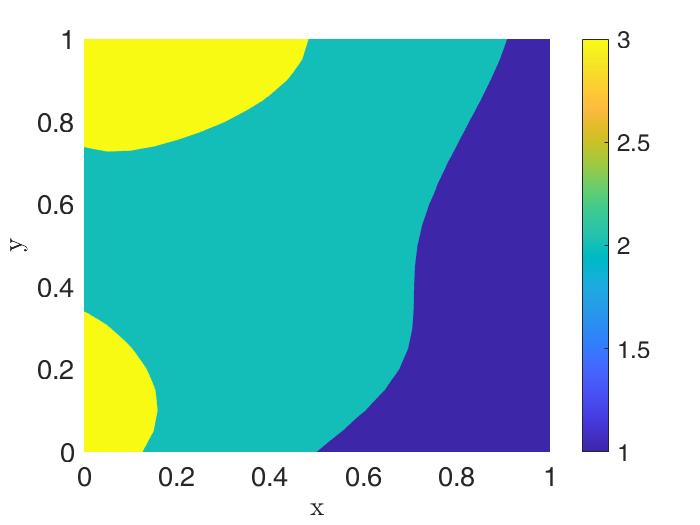}}	\subfigure{\includegraphics[scale=0.19]{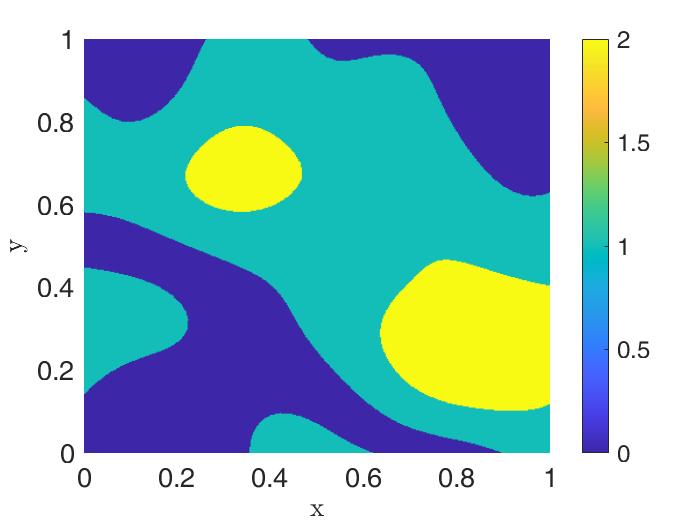}}
	\caption{Sample of a Matérn-1.5-GRF-subordinated Poisson process.}\label{FIG:SamplesGSLP}
\end{figure}
\begin{rem}
Discontinuous random fields often serve as prior model to solve inverse problems with a Bayesian approach as an alternative to the standard Gaussian prior (see e.g. \cite{MR3549006, HosseiniWellPosedBayesianInverseProblems}). In these situations, the prior model is often set to be a Gaussian(-related) level-set function. One way to construct such a prior model is as follow:
\begin{align*}
u(\underline{x}) = \sum_{i=1}^n u_i\mathds{1}_{D_i}(\underline{x}),~\underline{x}\in\mathcal{D},
\end{align*} 
where $n\in\mathbb{N}$, $(u_i,~i=1,\dots,n)\subset{\mathbb{R}}$ are fixed and 
\begin{align*}
D_i=\{\underline{x}\in\mathcal{D}~|~c_{i-1}\leq W(\underline{x})< c_i\},
\end{align*}
with fixed levels $(c_i,~i=1,\dots,n)\subset\mathbb{R}$ and a GRF $W$ (see e.g. \cite{MR3687326, MR4119332}). The GSLF, as defined above, may be interpreted as a generalization of the Gaussian level-set function.
\end{rem}

\begin{rem}\label{rem:GSLPmeasurable}
It is easy to see that the GSLF is measurable: A Lévy process $l:\Omega\times \mathbb{R}_+\rightarrow \mathbb{R}$ has càdlàg paths and, hence, is $\mathcal{F}\otimes \mathcal{B}(\mathbb{R}_+)-\mathcal{B}(\mathbb{R})$-measurable (see \cite[Chapter 1, Theorem 30]{StochasticIntegrationAndDifferentialEquations} and \cite[Chapter 6]{LevyProcessesAndInfinitelyDivisibleDistributions}). Further, since $F$ and $W$ are measurable by assumption, the mapping 
\begin{align*}
(\omega,\underline{x})\mapsto F(W(\omega,\underline{x})), ~(\omega,\underline{x})\in\Omega\times\mathcal{D},
\end{align*}
is $\mathcal{F}\otimes\mathcal{B}(\mathcal{D})-\mathcal{B}(\mathbb{R}_+)$-measurable. It follows now by \cite[Lemma 4.49]{InfiniteDimensionalAnalysis}, that the mapping
\begin{align*}
(\omega,\underline{x})\mapsto (\omega,F(W(\omega,\underline{x}))),~(\omega,\underline{x})\in\Omega\times\mathcal{D},
\end{align*}
 is $\mathcal{F}\otimes\mathcal{B}(\mathcal{D})-\mathcal{F}\otimes \mathcal{B}(\mathbb{R}_+)$-measurable. Therefore, the GSLF
 \begin{align*}
  (\omega,\underline{x})\mapsto  l(\omega,F(W(\omega,\underline{x}))),~(\omega,\underline{x})\in\Omega\times\mathcal{D},
 \end{align*}
is $\mathcal{F}\otimes \mathcal{B}(\mathcal{D})-\mathcal{B}(\mathbb{R})$-measurable (cf. \cite[Lemma 4.22]{InfiniteDimensionalAnalysis}).
\end{rem}


\section{The pointwise characteristic function of a GSLF}\label{sec:Chracteristicfunction}
In the following section we derive a formula for the pointwise characteristic function of the GSLF, which determines the pointwise distribution entirely. Such a formula is especially valuable in applications, where distributions of a random field have to be fitted to data observed from real world phenomena. We start with a technical Lemma on the computation of expectations of functionals of the GSLF.
\begin{lemma}\label{LE:IteratedExpectationGSLP}
Let $l=(l(t),~t\geq 0)$ be a stochastic process with a.s. càdlàg paths and $W_+$ be a real-valued, non-negative random variable which is stochastically independent of $l$. Further, let $g:\mathbb{R}\rightarrow\mathbb{R}$ be a continuous function. It holds
\begin{align*}
\mathbb{E}(g(l(W_+))) = \mathbb{E}(m(W_+)),
\end{align*}
with $m(z) := \mathbb{E}(g(l(z)))$ for $z\in\mathbb{R}_+$.
\end{lemma}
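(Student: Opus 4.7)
The plan is to exploit joint measurability of the map $(\omega,z)\mapsto l(\omega,z)$, which is granted by the càdlàg assumption, and then combine it with the independence of $W_+$ and $l$ through Fubini's theorem. This is essentially the classical ``freezing'' or ``independence lemma'' argument used routinely for computing expectations of functionals of time-changed processes.

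First I would establish that $l$, viewed as a map $\Omega\times\mathbb{R}_+\to\mathbb{R}$, is $\mathcal{F}\otimes\mathcal{B}(\mathbb{R}_+)$-$\mathcal{B}(\mathbb{R})$-measurable. The standard way is to approximate $l$ by the dyadic piecewise constant processes $l_n(\omega,z):=l(\omega,\lfloor 2^n z\rfloor/2^n)$, each of which is manifestly jointly measurable since it is built from countably many $\mathcal{F}$-measurable random variables indexed by Borel sets of $\mathbb{R}_+$; right-continuity of the paths of $l$ yields $l_n\to l$ pointwise, and joint measurability is preserved under pointwise limits. (This is actually the same argument cited in Remark~\ref{rem:GSLPmeasurable}.) Composing with the continuous function $g$, the map $(\omega,z)\mapsto g(l(\omega,z))$ is then likewise jointly measurable.

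Next I would use the independence of $W_+$ and $l$. Denoting by $\mathbb{P}_{W_+}$ the law of $W_+$ on $\mathbb{R}_+$, independence gives that the law of the pair $(l,W_+)$ on the appropriate product space factorises as $\mathbb{P}\otimes\mathbb{P}_{W_+}$. Applying Fubini's theorem to the jointly measurable, (nonnegative or integrable) integrand $(\omega,z)\mapsto g(l(\omega,z))$ yields
\begin{align*}
\mathbb{E}(g(l(W_+)))
&= \int_{\Omega}\int_{\mathbb{R}_+} g(l(\omega,z))\,\mathbb{P}_{W_+}(dz)\,\mathbb{P}(d\omega) \\
&= \int_{\mathbb{R}_+} \mathbb{E}(g(l(z)))\,\mathbb{P}_{W_+}(dz) = \mathbb{E}(m(W_+)),
\end{align*}
which is the desired identity. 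The continuity assumption on $g$ plays only a measurability role here (it could in fact be relaxed to Borel measurability together with a suitable integrability hypothesis).

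The one delicate step is the joint measurability of $l$: without it, neither the integrand $g(l(\omega,W_+(\omega)))$ nor the inner map $z\mapsto\mathbb{E}(g(l(z)))$ is guaranteed to make sense as a measurable function, and the Fubini step is not justified. Once this is established via the càdlàg approximation described above, the rest of the proof is a direct application of the product-measure reformulation of independence.
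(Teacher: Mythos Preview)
Your argument is correct and is exactly the standard ``freezing'' route; the paper itself gives no self-contained proof but defers to \cite[Lemma~4.1]{SubordGRFTheory}, whose argument is the one you outline (joint measurability of $(\omega,z)\mapsto l(\omega,z)$ from the c\`adl\`ag property, then Fubini via the product form of the joint law granted by independence). One cosmetic point: in your displayed chain the first equality already replaces $W_+$ by an independent $\mathbb{P}_{W_+}(dz)$-integral while still integrating the outer variable over $(\Omega,\mathbb{P})$, which is slightly informal since on the original space the same $\omega$ drives both $l$ and $W_+$; it is cleaner to integrate the outer variable over the path space of $l$ against its law $\mu_l$ (so that the factorisation reads $\mu_l\otimes\mathbb{P}_{W_+}$ rather than $\mathbb{P}\otimes\mathbb{P}_{W_+}$), or to phrase the step as conditioning on $\sigma(l)$. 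This does not affect the validity of the proof.
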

\begin{proof}
The proof follows by the same arguments as in the proof of \cite[Lemma 4.1]{SubordGRFTheory}. 
\end{proof}

\begin{rem}\label{rem:ExtensionIteratedExpectation}
Note that Lemma \ref{LE:IteratedExpectationGSLP} also holds for complex-valued, continuous and bounded functions $g:\mathbb{R}\rightarrow \mathbb{C}$ (cf. \cite[Remark 4.2]{SubordGRFTheory}). Further, we emphasize that Lemma \ref{LE:IteratedExpectationGSLP} also holds for an $\mathbb{R}_+^d$-valued random variable $W_+$ and a random field $(l(\underline{t}),~\underline{t}\in\mathbb{R}_+^d)$, independent of $W_+$, which is a.s. càdlàg in each variable, i.e. for $\mathbb{P}$-almost all $\omega \in\Omega$, it holds
\begin{align*}
\underset{n\rightarrow \infty}{lim} l(t_1^{(n)},\dots,t_d^{(n)}) = l(t_1,\dots,t_d),
\end{align*}
for $t_j^{(n)} \searrow t_j$, for $n\rightarrow \infty$, $j=1,\dots,d$, and any $\underline{t}=(t_1,\dots,t_d)\in \mathbb{R}_+^d$.
\end{rem}
With Lemma \ref{LE:IteratedExpectationGSLP} at hand, we are able to derive a formula for the pointwise characteristic funciton of the GSLF.
\begin{corollary}\label{COR:CharFctGSLP}
Let $l=(l(t),~t\geq 0)$ be a Lévy process with Lévy triplet $(\gamma,b,\nu)$ and $W=(W(\underline{x}),~\underline{x} \in \mathcal{D})$ be an independent GRF with pointwise mean $\mu_W(\underline{x})=\mathbb{E}(W(\underline{x}))$ and variance $\sigma_W(\underline{x})^2:=Var(W(\underline{x}))$ for $\underline{x} \in \mathcal{D}$. Further, let $F:\mathbb{R}\rightarrow \mathbb{R}_+$ be measurable. It holds
\begin{align*}
\phi_{l(F(W(\underline{x})))}(\xi)=\mathbb{E}\Big(\exp(i\xi \,l(F(W(\underline{x}))))\Big)  
&=\frac{1}{\sqrt{2\pi \sigma_W(\underline{x})^2}}\int_\mathbb{R}exp\Big( F(y)\psi(\xi)- \frac{(y-\mu_W(\underline{x}))^2}{2\sigma_W(\underline{x})^2}\Big) dy,
\end{align*}
for $\underline{x}\in\mathcal{D}$, where $\psi$ denotes the characteristic exponent of $l$ defined by 
\begin{align*}
\psi(\xi) = i\gamma \xi - \frac{b}{2}\xi^2 + \int_{\mathbb{R}\setminus\{0\}}e^{i\xi y}-1-i\xi y\mathds{1}_{\{|y|\leq 1\}}(y)\nu(dy),~\xi\in\mathbb{R}.
\end{align*}
\end{corollary}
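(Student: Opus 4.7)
The plan is to treat this as a straightforward conditioning calculation: freeze the value of the subordinator $F(W(\underline{x}))$, exploit the independence of $W$ and $l$ to evaluate the inner expectation via the Lévy--Khinchin formula, and then integrate out $W(\underline{x})$ against its Gaussian density. The role of Lemma \ref{LE:IteratedExpectationGSLP} (together with the complex-valued extension pointed out in Remark \ref{rem:ExtensionIteratedExpectation}) is to justify this conditioning rigorously.

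First I would set $W_+ := F(W(\underline{x}))$, which is an $\mathbb{R}_+$-valued random variable independent of $l$, and take $g(y) := \exp(i\xi y)$. This $g$ is complex-valued, continuous and bounded, and $l$ has a.s. càdlàg paths since it is a Lévy process, so the hypotheses of Lemma \ref{LE:IteratedExpectationGSLP} (in the extended form of Remark \ref{rem:ExtensionIteratedExpectation}) are met. Applying the lemma gives
\begin{align*}
\phi_{l(F(W(\underline{x})))}(\xi) = \mathbb{E}\bigl(g(l(W_+))\bigr) = \mathbb{E}\bigl(m(W_+)\bigr), \qquad m(z) = \mathbb{E}\bigl(\exp(i\xi l(z))\bigr).
\end{align*}

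Next I would use Theorem \ref{TH:LevyKhinchinFormula1d} to identify $m(z) = \exp(z\psi(\xi))$, where $\psi$ is the characteristic exponent of $l$ (this is the key input from the Lévy structure). Substituting back yields
\begin{align*}
\phi_{l(F(W(\underline{x})))}(\xi) = \mathbb{E}\bigl(\exp(F(W(\underline{x}))\,\psi(\xi))\bigr).
\end{align*}
Since $W(\underline{x}) \sim \mathcal{N}(\mu_W(\underline{x}), \sigma_W(\underline{x})^2)$, the law of the unconscious statistician expresses this last expectation as the stated integral against the one-dimensional Gaussian density, which completes the proof.

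The only minor obstacle is bookkeeping around the complex-valued integrand: $m(z)$ takes values in $\mathbb{C}$, so I would invoke the complex-valued version of Lemma \ref{LE:IteratedExpectationGSLP} (as stated in Remark \ref{rem:ExtensionIteratedExpectation}) and verify that $z \mapsto \exp(z\psi(\xi))$ is bounded on $\mathbb{R}_+$ (it is, because $\mathrm{Re}\,\psi(\xi) \leq 0$ for any Lévy characteristic exponent, so $|\exp(z\psi(\xi))| = \exp(z\,\mathrm{Re}\,\psi(\xi)) \leq 1$ for $z \geq 0$). This guarantees absolute integrability against the Gaussian density, so that writing the outer expectation as a Lebesgue integral is legitimate, and no separate Fubini justification is needed.
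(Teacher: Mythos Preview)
Your proof is correct and follows essentially the same route as the paper: apply Lemma~\ref{LE:IteratedExpectationGSLP} (via Remark~\ref{rem:ExtensionIteratedExpectation} for the complex-valued $g(\cdot)=\exp(i\xi\cdot)$) with $W_+=F(W(\underline{x}))$, identify $m(z)=\exp(z\psi(\xi))$ by the L\'evy--Khinchin formula, and then integrate against the Gaussian density of $W(\underline{x})$. Your additional remark that $\mathrm{Re}\,\psi(\xi)\leq 0$ ensures $|m(z)|\leq 1$ is a nice extra bit of care that the paper leaves implicit.
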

\begin{proof}
We consider a fixed $\underline{x}\in\mathcal{D}$ and use Lemma \ref{LE:IteratedExpectationGSLP} with $g(\cdot) := \exp(i\xi\cdot)$ and $W_+:=F(W(\underline{x}))$ to calculate
\begin{align*}
\mathbb{E}\Big(\exp(i\xi  \,l(F(W(\underline{x}))))\Big) = \mathbb{E}(g(l(W_+)) = \mathbb{E}(m(W_+)) = \mathbb{E}(m(F(W(\underline{x})))),
\end{align*}
where $m$ is defined through
\begin{align*}
m(z) = \mathbb{E}(\exp(i\xi l(z)) = \exp(z\,\psi(\xi)),~z\in\mathbb{R}_+,
\end{align*}
by the Lévy-Khinchin formula (see Theorem \ref{TH:LevyKhinchinFormula1d}).
Hence, we obtain
\begin{align*}
\mathbb{E}\Big(\exp(i\xi  \,l(F(W(\underline{x}))))\Big)&= \mathbb{E}\Big(\exp(F(W(\underline{x}))\psi(\xi))\Big)\\
&=\frac{1}{\sqrt{2\pi \sigma_W(\underline{x})^2}}\int_\mathbb{R}exp\Big( F(y)\psi(\xi)- \frac{(y-\mu_W(\underline{x}))^2}{2\sigma_W(\underline{x})^2}\Big) dy.
\end{align*}
\end{proof}


\section{Approximation of the fields}\label{sec:approximation}

The GSLF may in general not be simulated exactly since in most situations it is not possible to draw exact samples of the corresponding GRF and the Lévy process. The question arises how the GSLF may be approximated and if the corresponding approximation error may be quantified. In this section we answer both questions. We prove an approximation result for the GSLF where we approximate the GRF and the Lévy process separately.
To be more precise, we approximate the Lévy processes using a piecewise constant càdlàg approximation on a discrete grid (see e.g. \cite{ApproximationAndSimulation} and the remainder of the current section). The GRF may be approximated by a truncated Karhuen-Loève-expansion or using values of the GRF simulated on a discrete grid, e.g. via Circulant Embedding (see, e.g. \cite{AStudyOfElliptic, MLMCMethodsForStochEllMultiscalePDEs, FurtherAnalysisOfMultilevelMonteCarloMethodsForEllipticPDEsWithRandomCoefficients} resp. \cite{AnalysisOfCirculantEmbeddingMethodsForSamplingStationaryRandomFields, CirculantEmbeddingWithWMCAnalysisForEllipicPDEWithLognormalCoefficients}). Naturally, we have to start with some assumptions on the regularity of the GRF and the approximability of the Lévy process. Assumptions of this type are natural and well known in different situations (see e.g. \cite{AStudyOfElliptic, SGRFPDE, StrongAndWeakErrorEstimatesForTheSolutionsOfEllipticPDEsWithRandomCoefficient}). 
For simplicity, we consider centered GRFs in this subsection.

\begin{assumption}\label{ASS:CutProblemEigenvalues}
Let $W$ be a zero-mean GRF on the compact domain $\mathcal{D}$. We denote by $q_W:\mathcal{D}\times \mathcal{D}\rightarrow\mathbb{R}$ the corresponding covariance function and by $((\lambda_i,e_i),~i\in \mathbb{N})$ the eigenpairs associated to the corresponding covariance operator $Q$ as introduced in Section \ref{Subsec:GRF}. In particular, $(e_i,~i\in \mathbb{N})$ is an orthonormal basis of $L^2(\mathcal{D})$.
\begin{enumerate}
\item We assume that the eigenfunctions are continuously differentiable and there exist positive constants $\alpha, ~\beta, ~C_e, ~C_\lambda>0$ such that for any $i\in\mathbb{N}$ it holds
\begin{align*}
\|e_i\|_{L^\infty(\mathcal{D})}\leq C_e,\|\nabla e_i\|_{L^\infty(\mathcal{D})}\leq C_e i^\alpha,~ \sum_{i=1}^ \infty \lambda_ii^\beta\leq C_\lambda	< + \infty.
\end{align*}
\item $F:\mathbb{R}\rightarrow \mathbb{R}_+$ is Lipschitz continuous and globally bounded by $C_F>0$, i.e. $F(x)<C_F,~ x\in\mathbb{R}$.
\item $l$ is a Lévy process on $[0,C_F]$ with Lévy triplet $(\gamma,b,\nu)$ which is independent of $W$. Further, we assume there exists a constant $\eta>1$ and càdlàg approximations $l^\apprlevy$ of this process such that for every $s\in[1,\eta)$ it holds 
\begin{align}\label{EQ:AssApprLevy}
\mathbb{E}(|l(t)-l^\apprlevy(t)|^s)\leq C_l\varepsilon_l,~ t\in[0,C_F),
\end{align}
for $\varepsilon_l >0$ and 
\begin{align}\label{EQ:MomentsLevy}
\mathbb{E}(|l(t)|^s)\leq C_{l}t^\delta,~ t\in[0,C_F),
\end{align}
with $\delta\in(0,1]$ and a constant $C_l>0$ which may depend on $s$ but is independent of $t$ and $\varepsilon_l$.  
\end{enumerate}
\end{assumption}
We continue with a remark on Assumption \ref{ASS:CutProblemEigenvalues}.
\begin{rem}\label{rem:LPShortTimeBehaviour}
Assumtion \ref{ASS:CutProblemEigenvalues} \textit{i} is natural for GRFs and guarantees certain regularity properties for the paths of the GRF (see e.g. \cite{AStudyOfElliptic, SGRFPDE, StrongAndWeakErrorEstimatesForTheSolutionsOfEllipticPDEsWithRandomCoefficient}). Equation~\eqref{EQ:AssApprLevy} ensures that we can approximate the Lévy subordinators in an $L^s$-sense. This can be achieved under appropriate assumptions on the tails of the distribution of the Lévy processes, see \cite[Assumption 3.6, Assumption 3.7, Theorem 3.21]{ApproximationAndSimulation} and \cite[Section 7]{SGRFPDE}).
There are several results in the direction of condition \eqref{EQ:MomentsLevy}. For example, in \cite{MR2430710} and \cite{MR3373306}, the authors formulate general assumptions on the Lévy measure which guarantee Equation~\eqref{EQ:MomentsLevy} and similar properties. Further, in \cite{MR2430710} the authors explicitly derive the rate $\delta$ in Equation~\eqref{EQ:MomentsLevy} for several Lévy processes. In \cite[Proposition 2.3]{MR3004556}, an exact polynomial time-dependence of the absolute moments of a Lévy process under the assumption that the absolute moment of the Lévy process exists up to a certain order was proven.
In order to illustrate \eqref{EQ:MomentsLevy}, we present a short numerical example: for three different Lévy processes, we estimate $\mathbb{E}(|l(t)|^s)$ for $t=2^i$ with $i\in\{1,0,-1,\dots,-16\}$ using $10^7$ samples of the process $l$ and different values for the exponent $s\geq 1$. The results are shown in Figure \ref{Fig:MomLevy}, where the estimated moments  $\mathbb{E}(|l(t)|^s)$ are plotted against the time parameter $t$. The results clearly indicate that $\mathbb{E}(|l(t)|^s) = \mathcal{O}(t)$, $t\rightarrow 0$ which implies \eqref{EQ:MomentsLevy} with $\delta=1$ in the considered examples.
\begin{figure}[ht]
	\centering
	\subfigure{\includegraphics[scale=0.3]{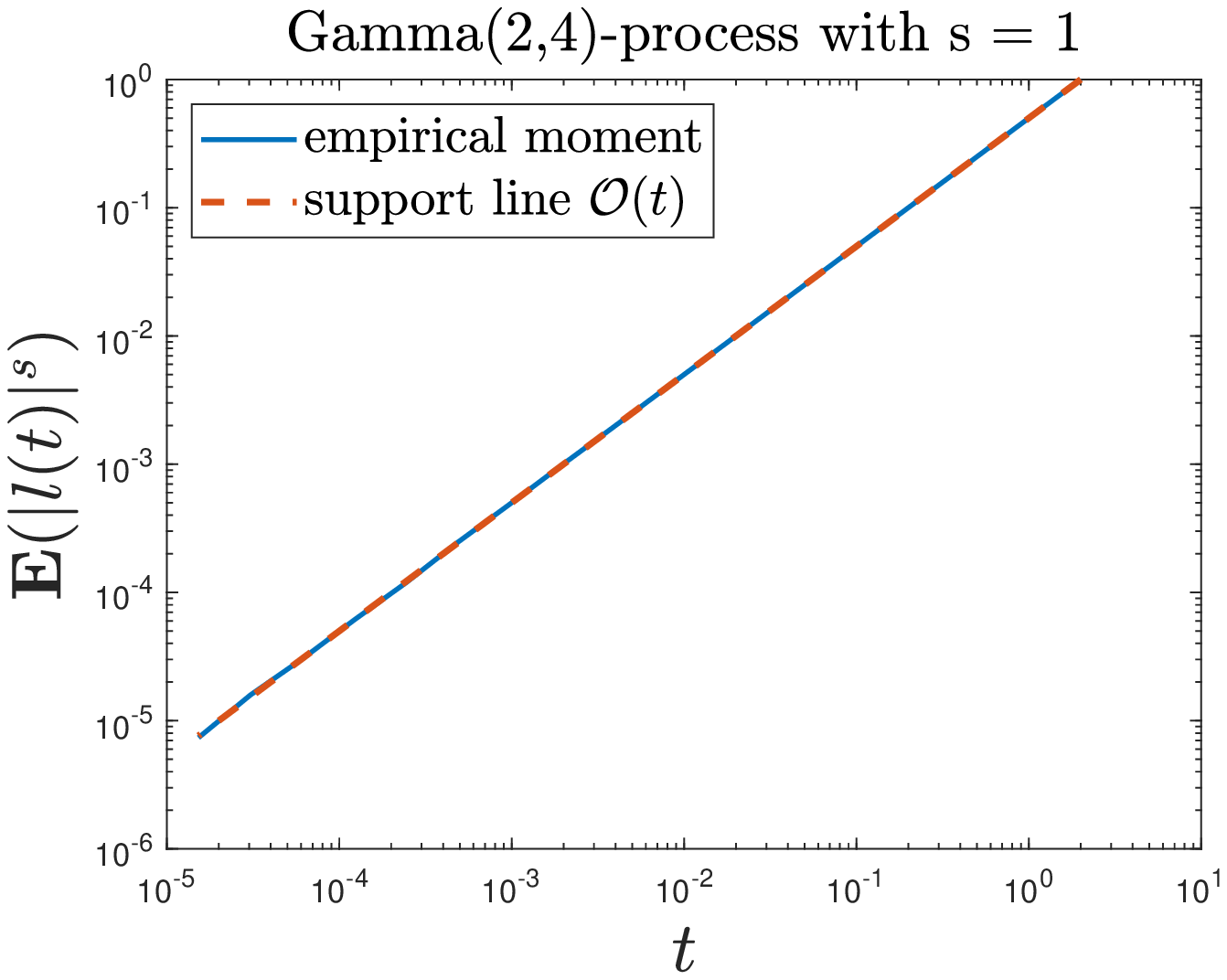}}
	\subfigure{\includegraphics[scale=0.3]{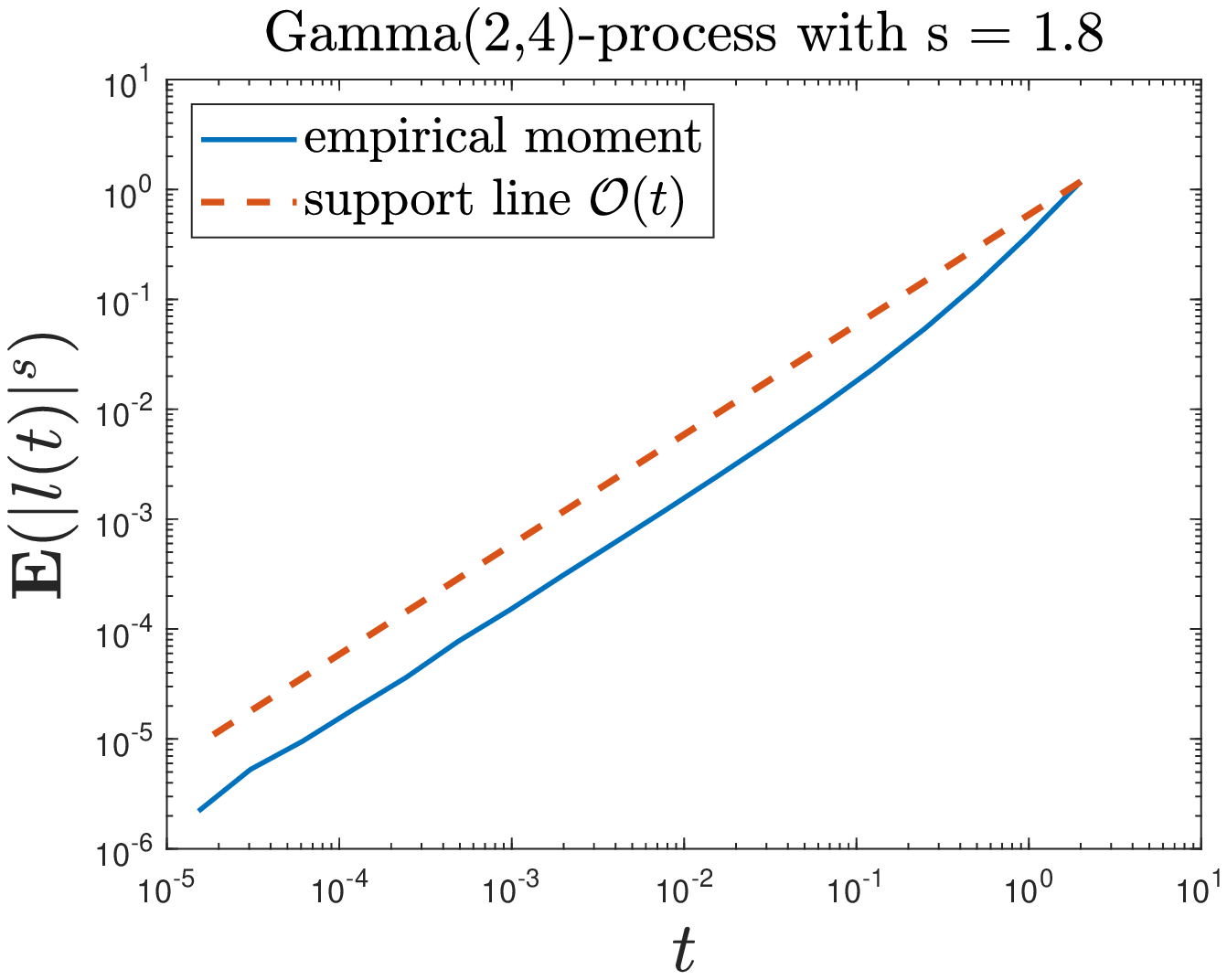}}
	\subfigure{\includegraphics[scale=0.3]{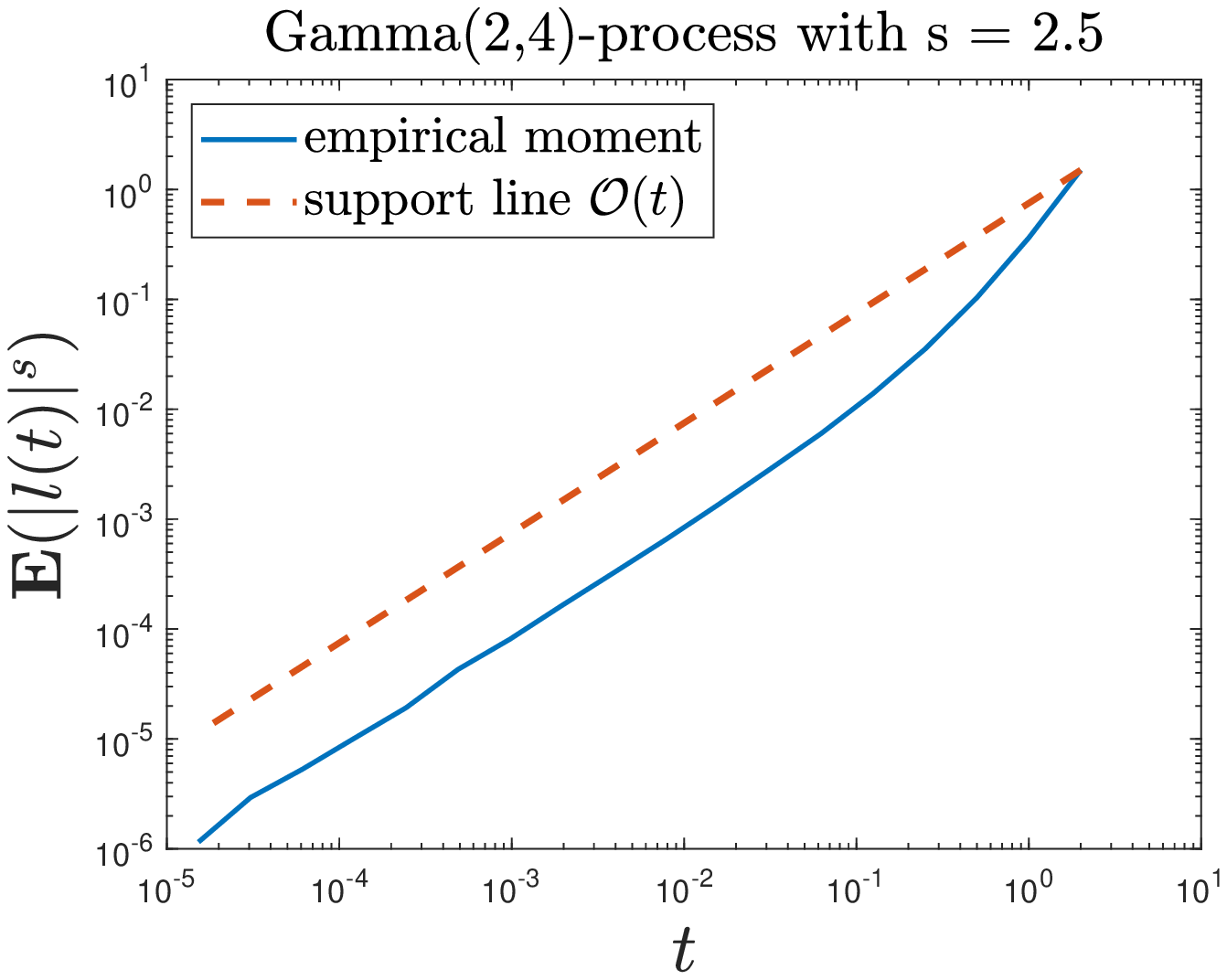}}\\
	\subfigure{\includegraphics[scale=0.3]{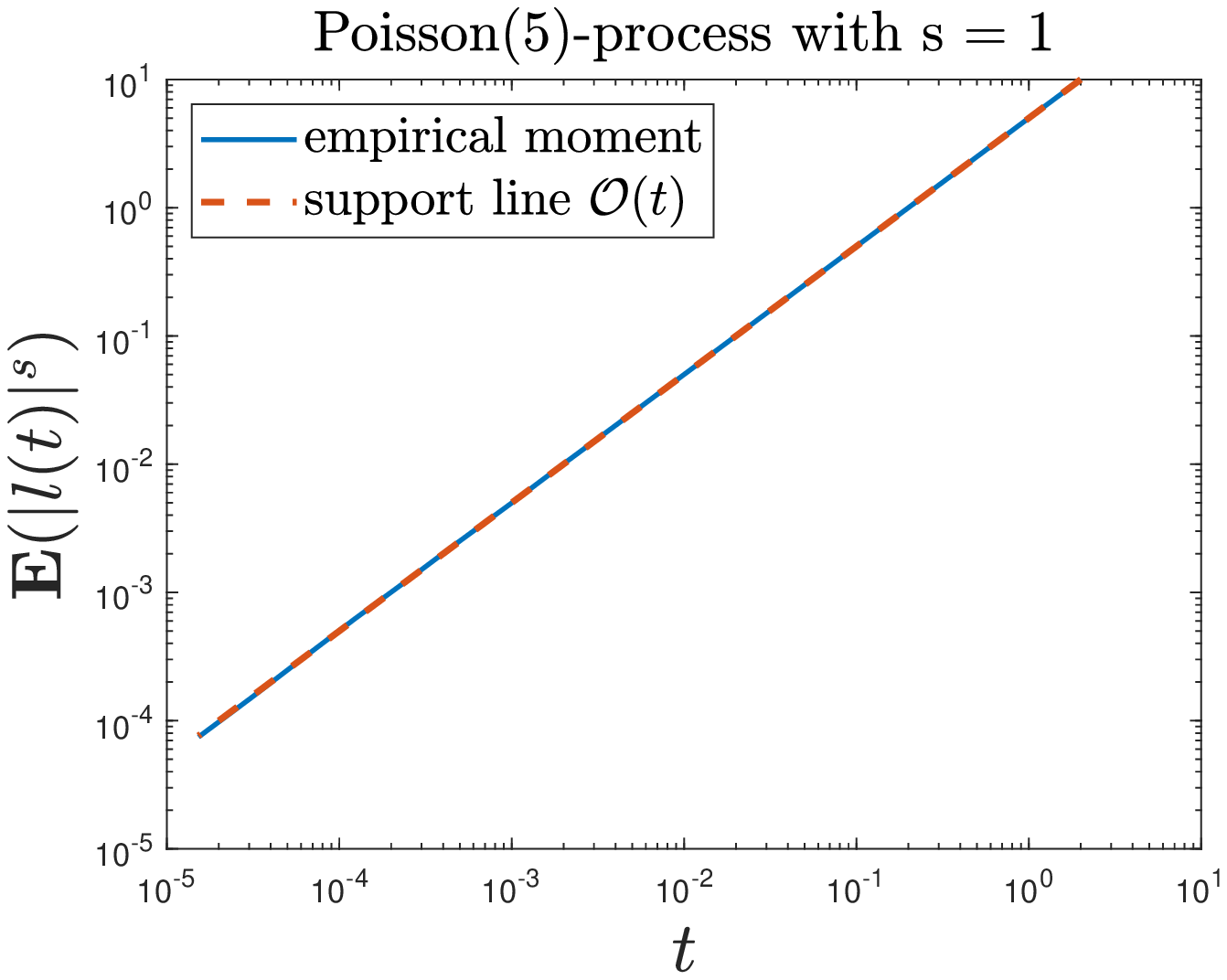}}
	\subfigure{\includegraphics[scale=0.3]{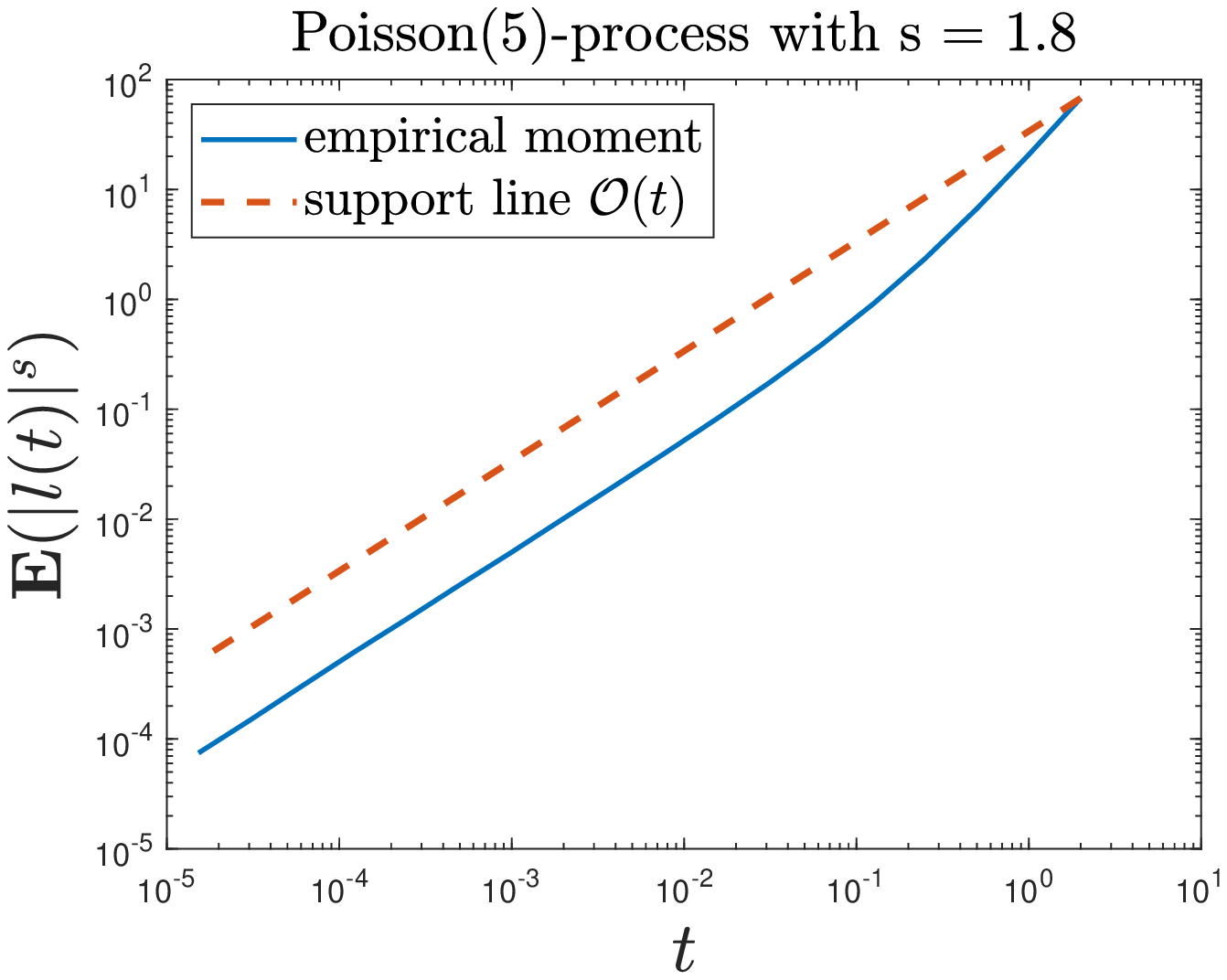}}
	\subfigure{\includegraphics[scale=0.3]{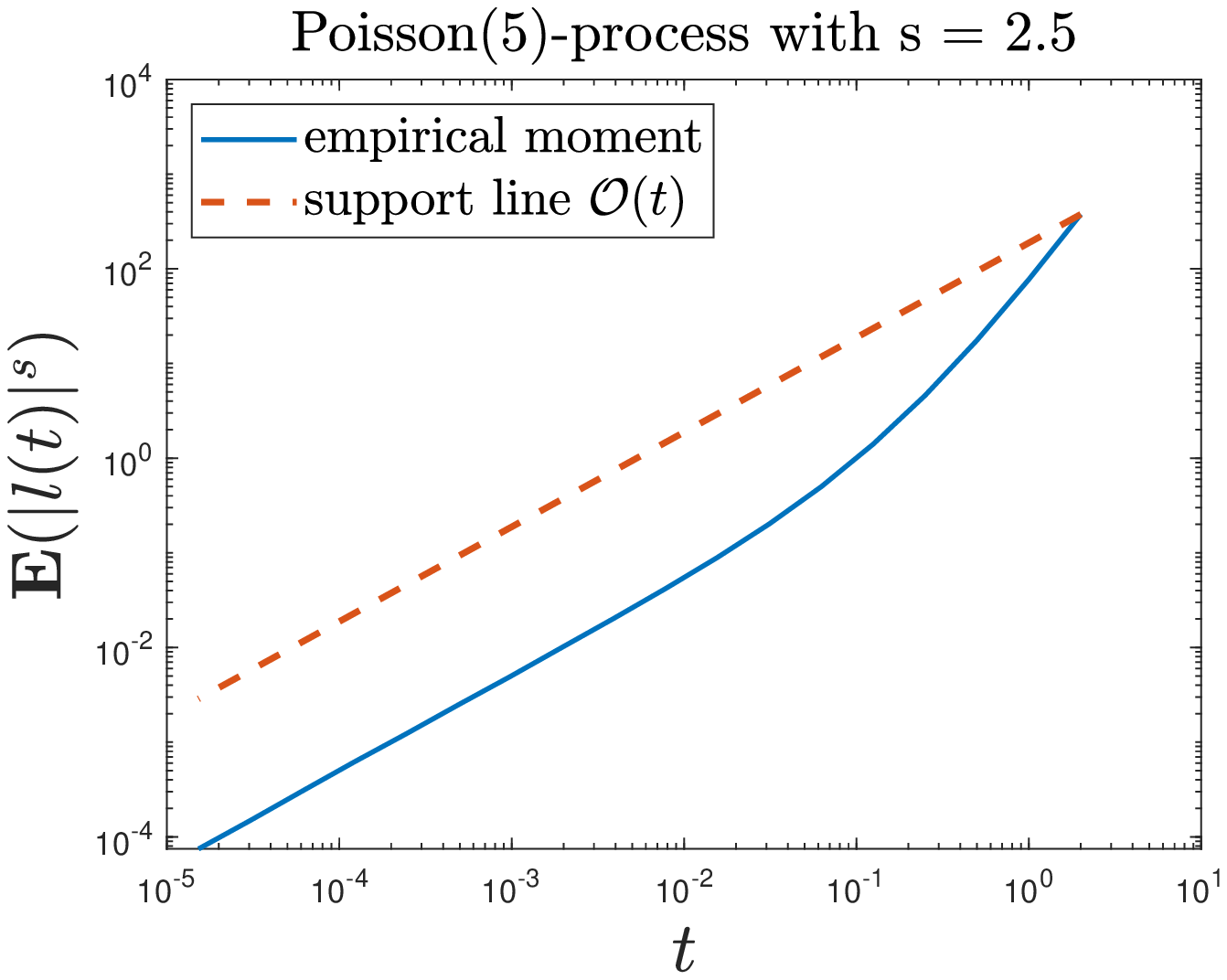}}\\  
	\subfigure{\includegraphics[scale=0.3]{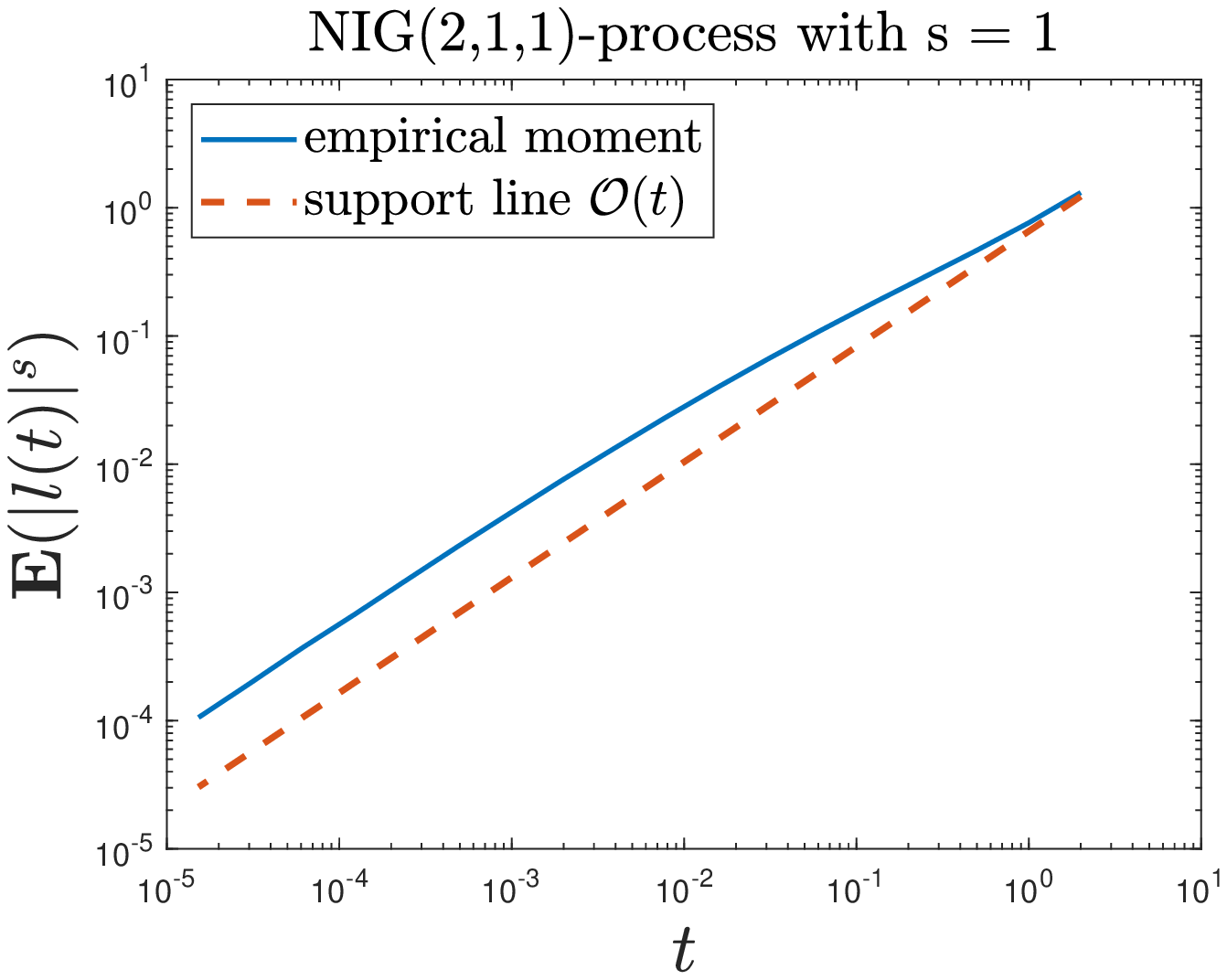}}
    \subfigure{\includegraphics[scale=0.3]{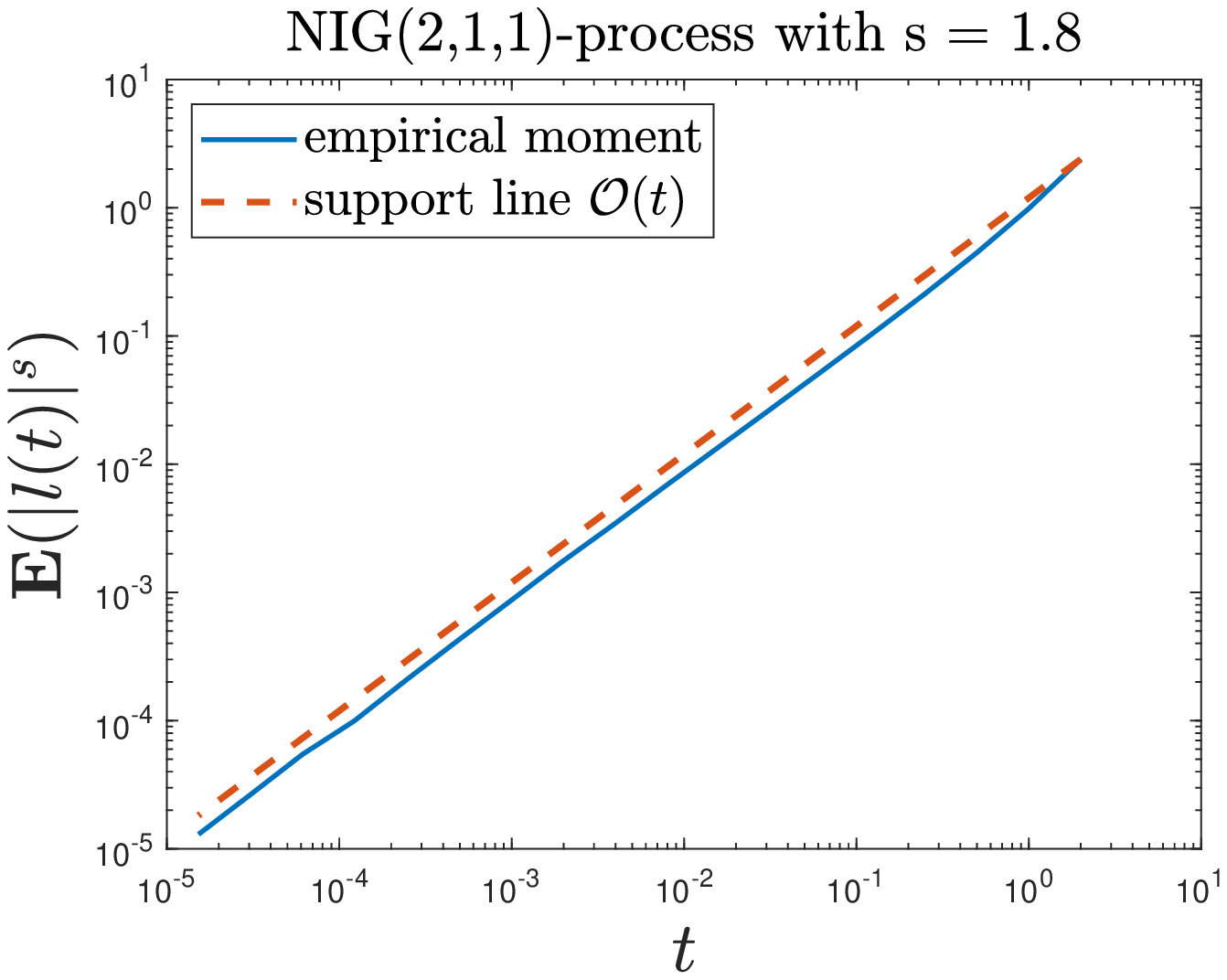}}
	\subfigure{\includegraphics[scale=0.3]{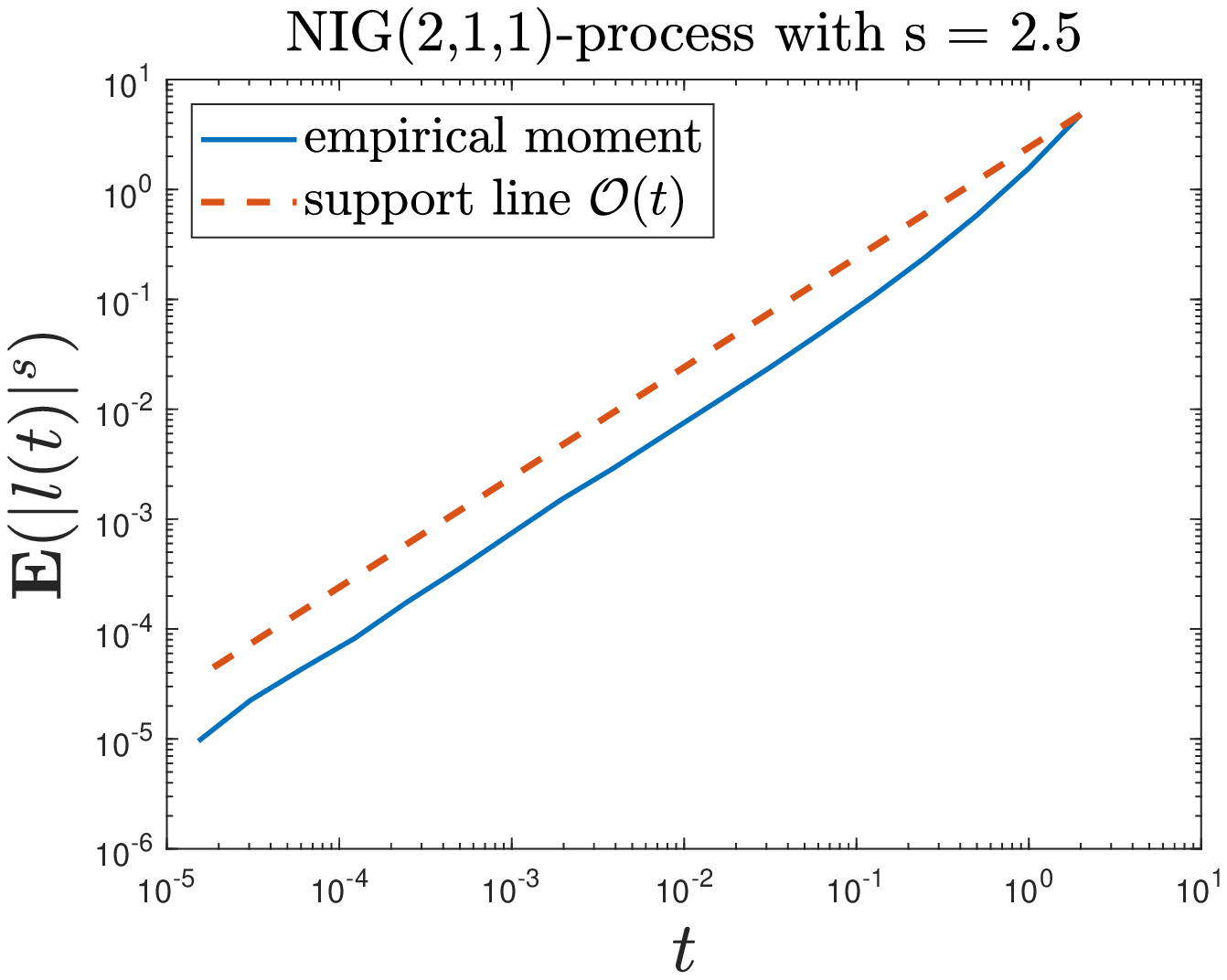}}
	\caption{Estimated $s$-moment for different Lévy processes and different values of $s$. Top: Gamma(2,4)-process, middle: Poisson(5)-process, bottom: NIG(2,1,1)-process.}\label{Fig:MomLevy}
\end{figure}
\end{rem}

We close this subsection with a remark on a possible way to construct approximations of Lévy processes.
\begin{rem}\label{rem:LPApprox}
One way to construct a càdlàg approximation $l^{(\varepsilon_l)}\approx l$ is a piecewise constant extension of the values of the process on a discrete grid: assume  $\{t_i,~i=0\dots,N_l\}$, with $t_0=0$ and $t_{N_l}=C_F$ is a grid on $[0,C_F]$ with $|t_{i+1}-t_i|=t_{i+1}-t_i=\varepsilon_l>0$ for $i=0,\dots,{N_l}-1$ and $\{l(t_i),~i=0,\dots,{N_l}\}$ are the values of the process $l$ on this grid. We define
\begin{align*}
l^{(\varepsilon_l)} (t) = \sum_{i=1}^{N_l} \mathds{1}_{[t_{i-1},t_i)} (t) \,l(t_{i-1}),~t\in[0,C_F).
\end{align*}
Such a construction yields an admissible approximation in the sense of Assumption \ref{ASS:CutProblemEigenvalues} \textit{iii} for many Lévy processes (see e.g. \cite[Section 7]{SGRFPDE} for the case of Poisson and Gamma processes). Note that the values of the process on the grid may be simulated easily for many Lévy processes using the independent and stationary increment property.
\end{rem}

\subsection{Approximation of the GRF}\label{subsec:ApprGRF}
In this subsection, we shortly introduce two possible approaches to approximate the GRF W. Both approaches are admissible for the construction of approximations of the GSLF in Subsection \ref{subsec:ApprGSLP}. In the following and for the rest of the section, we consider $\mathcal{D}=[0,D]^d$. Assumption \ref{ASS:CutProblemEigenvalues} allows conclusions to be drawn on the spatial regularity of the GRF $W$. The next lemma on the approximation of the GRF $W$ follows from the regularity properties of $W$ (see \cite[Lemma 4.4]{SGRFPDE} and \cite{StrongAndWeakErrorEstimatesForTheSolutionsOfEllipticPDEsWithRandomCoefficient}).
\begin{lemma}\label{LE:StrongErrorBoundGRFApprox}
Consider the discrete grid $G^{(\varepsilon_W)}=\{(x_{i^{(1)}},\dots,x_{i^{(d)}})|~{i^{(1)}},\dots, {i^{(d)}}=0,\dots,M_{\varepsilon_W}\}$ on $\mathcal{D}$ where $(x_i,~i=0,\dots,M_{\varepsilon_W})$ is an equidistant grid on $[0,D]$ with maximum step size $\varepsilon_W$. Further, let $W^\apprgrf$ be an approximation of the GRF $W$ on the discrete grids $G^\apprgrf$ which is constructed by point evaluation of the random field $W$ on the grid and multilinear interpolation between the grid points. Under Assumption \ref{ASS:CutProblemEigenvalues} \textit{i}, it holds for $n\in[1,+\infty)$:
\begin{align*}
\|W-W^\apprgrf\|_{L^n(\Omega;L^\infty(\mathcal{D}))}&\leq C(D,n,d)\varepsilon_W^\gamma
\end{align*}
for $\gamma< \min (1,\beta/(2\alpha))$, where $\beta$ and $\alpha$ are the parameters from Assumption \ref{ASS:CutProblemEigenvalues} \textit{i}.
\end{lemma}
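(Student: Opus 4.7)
The plan is to reduce the statement to a classical Hölder regularity estimate for the GRF combined with a standard multilinear interpolation bound, following the structure of the cited \cite{SGRFPDE}. First I would expand $W$ in its Karhunen--Loève series $W(\underline{x})=\sum_{i=1}^\infty \sqrt{\lambda_i}\,e_i(\underline{x})Z_i$ with i.i.d.\ standard normal $Z_i$, so that for any $\underline{x},\underline{y}\in\mathcal{D}$ the increment $W(\underline{x})-W(\underline{y})$ is centered Gaussian with variance $\sum_{i}\lambda_i(e_i(\underline{x})-e_i(\underline{y}))^2$. Using the pointwise bound $\|e_i\|_{L^\infty}\leq C_e$ and the Lipschitz bound $\|\nabla e_i\|_{L^\infty}\leq C_e i^\alpha$, I would interpolate via $|e_i(\underline{x})-e_i(\underline{y})|\leq (2C_e)^{1-\theta}(C_e i^\alpha\|\underline{x}-\underline{y}\|)^\theta$ for any $\theta\in[0,1]$, obtaining
\begin{equation*}
\sum_{i=1}^\infty \lambda_i(e_i(\underline{x})-e_i(\underline{y}))^2 \leq C \Big(\sum_{i=1}^\infty \lambda_i i^{2\alpha\theta}\Big)\,\|\underline{x}-\underline{y}\|^{2\theta}.
\end{equation*}
For $2\alpha\theta\leq \beta$ the series is bounded by $C_\lambda$, so by Gaussian moment equivalence $\mathbb{E}(|W(\underline{x})-W(\underline{y})|^{2s})\leq C_s\,\|\underline{x}-\underline{y}\|^{2s\theta}$ for every $s\geq 1$.

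Second, I would invoke the Kolmogorov--Chentsov continuity theorem in its quantitative form (Garsia--Rodemich--Rumsey / standard version on $\mathcal{D}=[0,D]^d$) to conclude that $W$ admits a modification for which
\begin{equation*}
\big\|[W]_{C^{0,\gamma}(\mathcal{D})}\big\|_{L^n(\Omega)} \leq C(D,n,d,s)
\end{equation*}
for any $\gamma<\theta-d/(2s)$. Since $s\geq 1$ can be taken arbitrarily large and $\theta$ can be chosen arbitrarily close to $\min(1,\beta/(2\alpha))$, every exponent $\gamma<\min(1,\beta/(2\alpha))$ is admissible.

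Third, I would apply a standard error estimate for tensor-product multilinear interpolation on the uniform grid $G^{(\varepsilon_W)}$: on each subcube of side $\varepsilon_W$, for a Hölder continuous $f$ with exponent $\gamma\in(0,1]$,
\begin{equation*}
\|f-I^{(\varepsilon_W)}f\|_{L^\infty(\mathcal{D})} \leq C(d)\,\varepsilon_W^{\gamma}\,[f]_{C^{0,\gamma}(\mathcal{D})},
\end{equation*}
since the interpolant is exact on constants and the nodal distance within any cube is at most $\sqrt{d}\,\varepsilon_W$. Applying this pathwise with $f=W(\omega,\cdot)$ and taking the $L^n(\Omega)$-norm yields $\|W-W^{(\varepsilon_W)}\|_{L^n(\Omega;L^\infty(\mathcal{D}))}\leq C(D,n,d)\,\varepsilon_W^{\gamma}\,\|[W]_{C^{0,\gamma}}\|_{L^n(\Omega)}$, which together with the previous step completes the bound.

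The main obstacle will be the Hölder regularity step: the Gaussian-moment computation is routine, but extracting a uniform-in-$\omega$ Hölder seminorm with controlled $L^n(\Omega)$-norm (as opposed to just almost-sure Hölder continuity) needs the quantitative Kolmogorov--Chentsov / Garsia--Rodemich--Rumsey inequality and careful bookkeeping of the loss $d/(2s)$ in the exponent to justify why \emph{any} $\gamma<\min(1,\beta/(2\alpha))$ is attainable. The interpolation estimate and the reference to \cite{SGRFPDE, StrongAndWeakErrorEstimatesForTheSolutionsOfEllipticPDEsWithRandomCoefficient} for this step are then straightforward.
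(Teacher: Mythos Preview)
Your proposal is correct and follows exactly the route that the paper defers to: the paper does not prove this lemma in-text but cites \cite[Lemma~4.4]{SGRFPDE} and \cite{StrongAndWeakErrorEstimatesForTheSolutionsOfEllipticPDEsWithRandomCoefficient}, whose argument is precisely the one you outline---KL-based increment variance bound via the interpolation $|e_i(\underline{x})-e_i(\underline{y})|\leq(2C_e)^{1-\theta}(C_ei^\alpha\|\underline{x}-\underline{y}\|)^\theta$, quantitative Kolmogorov--Chentsov to obtain $\|[W]_{C^{0,\gamma}}\|_{L^n(\Omega)}<\infty$ for every $\gamma<\min(1,\beta/(2\alpha))$, and then the pathwise multilinear interpolation error bound $\|f-I^{(\varepsilon_W)}f\|_{L^\infty}\leq C(d)\varepsilon_W^\gamma[f]_{C^{0,\gamma}}$. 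Your bookkeeping of the exponent loss $d/(2s)$ and the limiting argument in $s$ and $\theta$ is also the standard one used there, so nothing further is needed.
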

It follows by the Karhunen-Loève-Theorem (see e.g. \cite[Section 3.2]{RandomFieldsAndGeometry}) that the GRF $W$ admits the representation
\begin{align*}
W(\underline{x}) = \sum_{i=1}^\infty \sqrt{\lambda_i} Z_i e_i(\underline{x}),~\underline{x}\in \mathcal{D},
\end{align*}
with i.i.d. $\mathcal{N}(0,1)$-distributed random variables $(Z_i,~i\in\mathbb{N})$ and the sum converges in the mean-square sense, uniformly in $\underline{x}\in\mathcal{D}$. The Karhunen-Loève expansion (KLE) motivates another approach to approximate the GRF $W$: For a fixed cut-off index $N\in\mathbb{N}$, we define the approximation $W^{N}$ by 
\begin{align}\label{EQ:GRFAPPRKL}
W^{N}(\underline{x}) = \sum_{i=1}^{N} \sqrt{\lambda_i} Z_i e_i(\underline{x}),~\underline{x}\in \mathcal{D}.
\end{align}
Under Assumption \ref{ASS:CutProblemEigenvalues} we can quantify the corresponding approximation error (cf. \cite[Theorem 3.8]{AStudyOfElliptic}).
\begin{lemma}\label{LE:StrongApprGRFKLE}
Let Assumption \ref{ASS:CutProblemEigenvalues} \textit{i} hold. Then, it holds for any $N\in\mathbb{N}$ and $n\in[1,+\infty)$:
\begin{align*}
\|W-W^N\|_{L^n(\Omega;L^\infty(\mathcal{D}))}&\leq C(D,n,d)N^{-\frac{\beta}{2}}.
\end{align*}
\end{lemma}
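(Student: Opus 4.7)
The plan is to exploit the explicit Karhunen--Loève representation $R_N := W-W^N=\sum_{i>N}\sqrt{\lambda_i}\,Z_i\,e_i$ of the truncation residual and reduce the claim to two ingredients: (i) a pointwise Gaussian variance estimate, and (ii) a Sobolev--Slobodeckij embedding that lifts this pointwise bound to one uniform in $\underline{x}\in\mathcal{D}$.

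\emph{Pointwise control.} For each fixed $\underline{x}\in\mathcal{D}$ the random variable $R_N(\underline{x})$ is centered Gaussian with variance $\sum_{i>N}\lambda_i e_i(\underline{x})^2 \le C_e^2\sum_{i>N}\lambda_i$ because the $Z_i$ are i.i.d.\ $\mathcal{N}(0,1)$. The crucial tail bound comes from Assumption~\ref{ASS:CutProblemEigenvalues}~\textit{i}: since $i^\beta\ge N^\beta$ for $i>N$,
$\sum_{i>N}\lambda_i \le N^{-\beta}\sum_{i>N}\lambda_i i^\beta \le C_\lambda N^{-\beta}$.
Thus $\mathrm{Var}(R_N(\underline{x}))\le C_e^2 C_\lambda N^{-\beta}$ uniformly in $\underline{x}$, and by equivalence of Gaussian moments $\|R_N(\underline{x})\|_{L^n(\Omega)}\le c(n) C_e\sqrt{C_\lambda}\,N^{-\beta/2}$.

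\emph{Lifting to $L^\infty(\mathcal{D})$.} I would use the continuous embedding $W^{s,p}(\mathcal{D})\hookrightarrow L^\infty(\mathcal{D})$, valid whenever $s\in(0,1)$ and $sp>d$. By Jensen, for any $p\ge n$ it suffices to bound $\mathbb{E}\|R_N\|_{W^{s,p}(\mathcal{D})}^p$. The $L^p$-part is handled by integrating the pointwise estimate against $\mathcal{D}$ and is of order $|\mathcal{D}|N^{-p\beta/2}$. For the Gagliardo seminorm, Gaussian moment equivalence reduces the task to controlling the canonical pseudo-metric $d_N(\underline{x},\underline{y})^2=\sum_{i>N}\lambda_i(e_i(\underline{x})-e_i(\underline{y}))^2$. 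Interpolating between $\|e_i\|_{L^\infty(\mathcal{D})}\le C_e$ and $\|\nabla e_i\|_{L^\infty(\mathcal{D})}\le C_e i^\alpha$ yields $|e_i(\underline{x})-e_i(\underline{y})|\le (2C_e)^{1-\theta}(C_e i^\alpha)^\theta|\underline{x}-\underline{y}|^\theta$ for every $\theta\in[0,1]$, whence $d_N(\underline{x},\underline{y})^2\le C(\theta)|\underline{x}-\underline{y}|^{2\theta}\sum_{i>N}\lambda_i i^{2\alpha\theta}$. Picking $\theta$ with $2\alpha\theta<\beta$ preserves the full rate in $N$, namely $\sum_{i>N}\lambda_i i^{2\alpha\theta}\le C_\lambda N^{2\alpha\theta-\beta}$, and routine integration of $|\underline{x}-\underline{y}|^{p\theta-d-sp}$ over $\mathcal{D}\times\mathcal{D}$ with $s<\theta$ closes the bound at the desired order $N^{-p\beta/2}$.

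\emph{Main obstacle.} The delicate point is the joint choice of the parameters $(p,s,\theta)$: $\theta<\beta/(2\alpha)$ must hold so that the $i^{2\alpha\theta}$-weighted eigenvalue sum still inherits the $N^{-\beta}$ decay, $s<\theta$ is needed so that the Gagliardo kernel is integrable, while $sp>d$ and $p\ge n$ are required for the Sobolev embedding and the Jensen step, respectively. All four constraints can be met by taking $p$ sufficiently large and $s,\theta$ correspondingly small, with the resulting dependence on $\alpha,\beta,C_e,C_\lambda$ absorbed into the constant $C(D,n,d)$ of the statement.
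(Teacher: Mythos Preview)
Your approach is genuinely different from the paper's. The paper proves this lemma in two lines: it invokes \cite[Theorem~3.8]{AStudyOfElliptic} to obtain directly
\[
\|W-W^N\|_{L^n(\Omega;L^\infty(\mathcal{D}))}^2 \le C(D,n,d)\sum_{i>N}\lambda_i,
\]
and then bounds the tail sum exactly as you do, via $\sum_{i>N}\lambda_i\le N^{-\beta}\sum_{i>N}\lambda_i i^\beta\le C_\lambda N^{-\beta}$. All the analytic work of passing from pointwise control to $L^\infty(\mathcal{D})$ is outsourced to the cited reference.

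Your self-contained route via the fractional Sobolev embedding is attractive, but as written it does \emph{not} deliver the stated rate. The Gagliardo seminorm step gives
\[
\mathbb{E}\,[R_N]_{W^{s,p}(\mathcal{D})}^p \;\lesssim\; \Bigl(\sum_{i>N}\lambda_i i^{2\alpha\theta}\Bigr)^{p/2} \;\le\; C_\lambda^{p/2}\,N^{(2\alpha\theta-\beta)p/2},
\]
which is $N^{-(\beta-2\alpha\theta)p/2}$, strictly worse than the $N^{-\beta p/2}$ you claim in the last sentence of the ``lifting'' paragraph. Since the embedding $W^{s,p}\hookrightarrow L^\infty$ forces $s>d/p>0$ and the Gagliardo integral forces $\theta>s$, you cannot send $\theta\to 0$ without simultaneously letting $p\to\infty$, in which case the embedding constant and the moment constants are no longer uniform. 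Choosing the parameters $N$-dependently would make $C$ depend on $N$. Hence your argument yields $\|W-W^N\|_{L^n(\Omega;L^\infty(\mathcal{D}))}\le C\,N^{-(\beta-\varepsilon)/2}$ for every fixed $\varepsilon>0$, but not the sharp exponent $\beta/2$ asserted in the lemma.

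To recover the full rate without citing the external theorem you would need a sharper tool for the supremum---for instance Borell--TIS together with a Dudley entropy estimate, which bounds $\mathbb{E}\sup_{\underline{x}}|R_N(\underline{x})|$ by a quantity comparable to $\sup_{\underline{x}}\mathrm{Var}(R_N(\underline{x}))^{1/2}$ up to a factor that does not depend on $N$; Gaussian hypercontractivity then upgrades this to all $L^n$. That is essentially what the result in \cite{AStudyOfElliptic} encapsulates.
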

\begin{proof}
It follows by \cite[Theorem 3.8]{AStudyOfElliptic} that 
\begin{align*}
\|W-W^N\|_{L^n(\Omega;L^\infty(\mathcal{D}))}^2\leq C(D,n,d) \sum_{i=N+1}^\infty \lambda_i,
\end{align*}
for all $N\in\mathbb{N}$. We use Assumption \ref{ASS:CutProblemEigenvalues} \textit{i} to compute
\begin{align*}
\sum_{i=N+1}^\infty \lambda_i = \sum_{i=N+1}^\infty \lambda_i i^{\beta} i^{-\beta}\leq N^{-\beta}\sum_{i=N+1}^\infty \lambda_i i^{\beta}\leq C_\lambda N^{-\beta},
\end{align*}
which finishes the proof.
\end{proof}
In the following, we denote by $W^N$ an approximation of the GRF of $W$. The notation is clear if we approximate the GRF by the truncated KLE. If we approximate $W$ by sampling on a discrete grid (cf. Lemma \ref{LE:StrongErrorBoundGRFApprox}), then we use $\varepsilon_W = 1/N$ and abuse notation to write $W^N = W^{(1/N)}$. Regardless of the choice between these two approaches, we thus obtain an approximation $W^N\approx W$ with 
\begin{align}\label{EQ:ApproximationErrorGRFGeneral}
\|W-W^N\|_{L^n(\Omega;L^\infty(\mathcal{D}))}&\leq C(D,n,d)R(N),
\end{align}
for $N\in\mathbb{N}$ and $n\in[1,+\infty)$, where $R(N)=N^{-\gamma}$ if we approximate the GRF by discrete sampling and $R(N)=N^{-\frac{\beta}{2}} $ if we approximate it by the KLE approach.

\subsection{Approximation of the GSLF}\label{subsec:ApprGSLP}

We are now able to quantify the approximation error for the GSLF where both components, the Lévy process and the GRF, are approximated.

\begin{theorem}\label{TH:GSLPAPPR}
Let Assumption \ref{ASS:CutProblemEigenvalues} hold and assume an approximation $W^N\approx W$ of the GRF is given, as introduced in Subsection \ref{subsec:ApprGRF}. For a given real number $1\leq p<\eta$ and a globally Lipschitz continuous function $g:\mathbb{R}\rightarrow\mathbb{R}$, it holds
\begin{align*}
\|g(l^\apprlevy(F(W^N))) - g(l(F(W)))\|_{L^p(\Omega;L^p(\mathcal{D}))}\leq C( \varepsilon_l^{\frac{1}{p}} + R(N)^\frac{\delta}{p} ),
\end{align*}
for $N\in\mathbb{N}$ and $\varepsilon_l>0$, where $\delta$ is the positive constant from \eqref{EQ:MomentsLevy}.
\end{theorem}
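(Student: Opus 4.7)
The plan is a triangle-inequality split that isolates the Lévy approximation error from the GRF approximation error:
$$
\|g(l^{\apprlevy}(F(W^N))) - g(l(F(W)))\|_{L^p(\Omega;L^p(\mathcal{D}))} \leq I_1 + I_2,
$$
where $I_1 := \|g(l^{\apprlevy}(F(W^N))) - g(l(F(W^N)))\|_{L^p(\Omega;L^p(\mathcal{D}))}$ (only the Lévy process is approximated) and $I_2 := \|g(l(F(W^N))) - g(l(F(W)))\|_{L^p(\Omega;L^p(\mathcal{D}))}$ (only the GRF is approximated). In both terms I would pull out the Lipschitz constant $L_g$ of $g$ and use Fubini to rewrite the $L^p(\Omega;L^p(\mathcal{D}))$-norm as an integral over $\mathcal{D}$ of a pointwise $L^p(\Omega)$-expectation, so that matters reduce to controlling differences of the Lévy process at random times.

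For $I_1$, the time argument of $l$ and $l^{\apprlevy}$ is the same random value $F(W^N(\underline{x}))$, which is independent of both processes. Conditioning on $W^N(\underline{x})$ via the extension of Lemma \ref{LE:IteratedExpectationGSLP} described in Remark \ref{rem:ExtensionIteratedExpectation} (applied to the càdlàg process $z \mapsto l^{\apprlevy}(z) - l(z)$), the pointwise expectation equals $\mathbb{E}(m(F(W^N(\underline{x}))))$ with $m(z) := \mathbb{E}|l^{\apprlevy}(z) - l(z)|^p$. Assumption \ref{ASS:CutProblemEigenvalues}~\textit{iii} together with the hypothesis $p \in [1,\eta)$ gives $m(z) \leq C_l \varepsilon_l$ uniformly in $z \in [0, C_F)$, so that integration over $\mathcal{D}$ and taking the $p$-th root yields $I_1 \leq C \varepsilon_l^{1/p}$.

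For $I_2$, the Lévy process is evaluated at two distinct random times, $F(W(\underline{x}))$ and $F(W^N(\underline{x}))$. Conditioning on this pair (both components are independent of $l$) via the multivariate version of Lemma \ref{LE:IteratedExpectationGSLP} reduces the inner expectation to $h(a,b) := \mathbb{E}|l(a) - l(b)|^p$ for deterministic $a,b \in [0, C_F)$. Stationarity of increments gives $l(a) - l(b) \stackrel{d}{=} l(|a-b|)$, and the short-time moment bound \eqref{EQ:MomentsLevy} then yields $h(a,b) \leq C_l |a-b|^\delta$. Combining this with the Lipschitz continuity of $F$ and the pointwise bound $|W^N(\underline{x}) - W(\underline{x})| \leq \|W^N - W\|_{L^\infty(\mathcal{D})}$ gives
$$
I_2^p \leq L_g^p \, C_l \, L_F^\delta \, |\mathcal{D}| \, \mathbb{E}\bigl(\|W^N - W\|_{L^\infty(\mathcal{D})}^\delta\bigr).
$$
Since $\delta \in (0,1] \leq p$, Jensen's inequality applied to the concave map $x \mapsto x^{\delta/p}$ combined with the GRF approximation bound \eqref{EQ:ApproximationErrorGRFGeneral} yields $\mathbb{E}(\|W^N - W\|_{L^\infty(\mathcal{D})}^\delta) \leq (C R(N))^\delta$, hence $I_2 \leq C R(N)^{\delta/p}$.

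The main obstacle is $I_2$: the Lévy process must be controlled at two correlated random times, so the argument must exploit the joint independence of $l$ from $(W, W^N)$ together with the stationary-increment property in order to bring the short-time moment bound \eqref{EQ:MomentsLevy} into play. Apart from the one-shot Jensen step that passes the concave exponent $\delta$ through the $L^p(\Omega;L^\infty(\mathcal{D}))$-bound on $W - W^N$, everything else is a routine triangle-and-Fubini computation.
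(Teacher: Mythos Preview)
Your proposal is correct and follows essentially the same route as the paper's proof: the same triangle split into $I_1$ and $I_2$, the same use of Lemma~\ref{LE:IteratedExpectationGSLP} (and its multivariate extension in Remark~\ref{rem:ExtensionIteratedExpectation}) to condition on the subordinating variables, and the same appeal to stationarity of increments plus \eqref{EQ:MomentsLevy} to bound $\mathbb{E}|l(a)-l(b)|^p$ by $C_l|a-b|^\delta$. The only cosmetic difference is in the final step on $I_2$: the paper works pointwise, applies H\"older/Jensen for the concave map $x\mapsto x^\delta$ to $\mathbb{E}|W(\underline{x})-W^N(\underline{x})|^\delta$, and then invokes \eqref{EQ:ApproximationErrorGRFGeneral} with $n=1$, whereas you first pass to the sup-norm and use Jensen for $x\mapsto x^{\delta/p}$ together with \eqref{EQ:ApproximationErrorGRFGeneral} at $n=p$; both yield the same $R(N)^{\delta/p}$ bound.
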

\begin{proof}
We calculate using Fubini's theorem and the triangle inequality
\begin{align*}
&\|g(l^\apprlevy(F(W^N))) - g(l(F(W)))\|_{L^p(\Omega;L^p(\mathcal{D}))}\\
& \leq  \|g(l^\apprlevy(F(W^N))) - g(l(F(W^N)))\|_{L^p(\Omega\times \mathcal{D})} +  \|g(l(F(W^N))) - g(l(F(W)))\|_{L^p(\Omega\times \mathcal{D})}\\
&=:I_1+I_2
\end{align*}
 Further, we use the Lipschitz continuity of $g$ together with Lemma \ref{LE:IteratedExpectationGSLP} to obtain for any $\underline{x}\in\mathcal{D}$
\begin{align*}
\mathbb{E}(| g(l^\apprlevy(F(W^N(\underline{x})))) - g(l(F(W^N(\underline{x}))))|^p) &\leq C_g\mathbb{E}(| l^\apprlevy(F(W^N(\underline{x}))) - l(F(W^N(\underline{x})))|^p)\\
& = \mathbb{E}(m(F(W^N(\underline{x})))
\end{align*} 
 with 
 \begin{align*}
 m(z) = \mathbb{E}(|l(z)-l^\apprlevy(z)|^p)\leq C_l\varepsilon_l  
 \end{align*}
 for $z\in[0,C_F)$ by Assumption \ref{ASS:CutProblemEigenvalues} \textit{iii}. Therefore, we obtain
 \begin{align*}
\mathbb{E}(| g(l^\apprlevy(F(W^N(\underline{x})))) - g(l(F(W^N(\underline{x}))))|^p)\leq C_l\varepsilon_l,
\end{align*}
for all $\underline{x}\in\mathcal{D}$ and, hence,
 \begin{align*}
 I_1^p &= \int_\mathcal{D}\mathbb{E}(| g(l^\apprlevy(F(W^N(\underline{x})))) - g(l(F(W^N(\underline{x})))) |^p)d\underline{x}\\
 &\leq  D^d C_l\varepsilon_l.
 \end{align*}
 For the second summand we calculate using Lemma \ref{LE:IteratedExpectationGSLP} and Remark \ref{rem:ExtensionIteratedExpectation} with $\tilde{l}(t_1,t_2):=l(t_1)-l(t_2)$ and $\tilde{W}_+:=(F(W(\underline{x})),F(W^N(\underline{x})))$
\begin{align*}
\mathbb{E}(|l(F(W(\underline{x}))) - l(F(W^N(\underline{x})))|^p) = \mathbb{E}(\tilde m(F(W(\underline{x})),F(W^N(\underline{x})))),
\end{align*} 
with
\begin{align*}
\tilde m(t_1,t_2) = \mathbb{E}(|l(t_1)-l(t_2)|^p),~t_1,t_2\in [0,C_F).
\end{align*}
For $C_F> t_1\geq t_2\geq 0$, the stationarity of $l$ together with Assumption \ref{ASS:CutProblemEigenvalues} \textit{iii} yields
\begin{align*}
\tilde{m}(t_1,t_2)= \mathbb{E}(|l(t_1-t_2)|^p) \leq C_l |t_1-t_2|^\delta.
\end{align*}
Further, for $0\leq t_1\leq t_2< C_F$ we have 
\begin{align*}
\tilde{m}(t_1,t_2)= \mathbb{E}(|l(t_2)-l(t_1)|^p)=\mathbb{E}(|l(t_2-t_1)|^p) \leq C_l |t_2-t_1|^\delta.
\end{align*}
Overall, we obtain for any $\omega\in\Omega$ the pathwise estimate
\begin{align*}
\tilde{m}(F(W(\underline{x})),F(W^N(\underline{x})))&\leq C_l|F(W(\underline{x})) - F(W^N(\underline{x}))|^\delta,
\end{align*}
for any $\underline{x}\in\mathcal{D}$.
We apply Hölder's inequality and Equation \eqref{EQ:ApproximationErrorGRFGeneral} to obtain
\begin{align*}
\mathbb{E}(|l(F(W(\underline{x}))) - l(F(W^N(\underline{x})))|^p) &= \mathbb{E}(\tilde m(F(W(\underline{x})),F(W^N(\underline{x})))),\\ 
&\leq C_l\mathbb{E}(|F(W(\underline{x})) - F(W^N(\underline{x}))|^\delta)\\
&\leq  C_l\mathbb{E}(|F(W(\underline{x})) - F(W^N(\underline{x}))|)^\delta\\
&\leq  C_lC_F\mathbb{E}(|W(\underline{x}) - W^N(\underline{x})|)^\delta\\
&\leq C_lC_F C(D,d)^\delta R(N)^{\delta},
\end{align*}
for any $\underline{x}\in\mathcal{D}$. Finally, we use the Lipschitz contnuity of $g$ to obtain:
 \begin{align*}
 I_2^p &= \int_\mathcal{D}\mathbb{E}(| g(l(F(W^N(\underline{x})))) - g(l(F(W(\underline{x})))) |^p)d\underline{x}\\
 &\leq C(C_l,D,d,\delta,C_F,C_g)R(N)^{\delta}.
 \end{align*}
 Overall, we end up with
 \begin{align*}
 \|g(l^\apprlevy(F(W^N))) - g(l(F(W)))\|_{L^p(\Omega;L^p(\mathcal{D}))} \leq C(C_l,D,d,\delta,C_F,C_g,p)(\varepsilon_l^{\frac{1}{p}} + R(N)^{\frac{\delta}{p}}).
 \end{align*}
\end{proof}

\subsection{The pointwise distribution of the approximated GSLF}\label{subsec:ApprLevyPWDist}

In Section \ref{sec:Chracteristicfunction} we ivestigated the pointwise distribution of a GSLF and derived a formula for its pointwise characteristic function. In Section \ref{sec:approximation}, we demonstrated how approximations of the Lévy process $l$ and the underlying GRF $W$ may be used to approximate the GSLF and quantified the approximation error. This is of great importance especially in applications, since it is in general not possible to simulate the GRF or the Lévy process on their continuous parameter domains. The question arises how such an approximation affects the pointwise distribution of the field. For this purpose, we prove in the following a formula for the pointwise charateristic function of the approximated GSLF.

\begin{corollary}\label{COR:CharFctApprGSLP}
We consider a GSLF with Lévy process $l$ and an independent GRF $W$. Let $l^{(\varepsilon_l)}\approx l$ be a càdlàg approximation of the Lévy process and $W^N\approx W$ be an approximation of the GRF, where we assume that the mean function $\mu_W$ of the GRF $W$ is known and does not need to be approximated. For $\underline{x}\in\mathcal{D}$, the pointwise characteristic function of the approximated GSLF is given by
\begin{align*}
\phi_{l^{(\varepsilon_l)}(F(W^N(\underline{x})))}(\xi) =\mathbb{E}\Big(\exp(i\xi l^{(\varepsilon_l)}(F(W^N(\underline{x}))))\Big)=\mathbb{E}(\rho_{l^{(\varepsilon_l)}}(F(W^N(\underline{x})),\xi)),~\xi\in\mathbb{R},
\end{align*}
where
\begin{align*}
\rho_{l^{(\varepsilon_l)}}(t,\xi) = \mathbb{E}(\exp(i\xi l^{(\varepsilon_l)}(t))),
\end{align*}
denotes the pointwise characteristic function of $l^{(\varepsilon_l)}$.
Further, consider the discrete grid $\{t_i,~i=0,\dots,N_l\}$ with $t_0=0$, $t_{N_l}=C_F$ and $|t_{i+1}-t_i|=\varepsilon_l$, for $i=0,\dots,N_\ell-1$. 

If the Lévy process is approximated according to Remark \ref{rem:LPApprox} and the GRF is approximated with the truncated KLE, that is
\begin{align*}
W^N(\underline{x}) =\mu_W(\underline{x})+ \sum_{i=1}^N \sqrt{\lambda_i} Z_i e_i(\underline{x}) \approx \mu_W(\underline{x}) + \sum_{i=1}^\infty \sqrt{\lambda_i} Z_i e_i(\underline{x})=W(\underline{x}),~\underline{x}\in \mathcal{D},
\end{align*}
 with some $N\in\mathbb{N}$, i.i.d. standard normal random variables $(Z_i,~i\in\mathbb{N})$ and eigenbasis $((\lambda_i,e_i),~i\in\mathbb{N})$ corresponding to the covariance operator $Q$ of $W$ (cf. Section \ref{subsec:ApprGRF}) and $F(\cdot)<C_F$, we obtain
\begin{align*}
\phi_{l^{(\varepsilon_l)}(F(W^N(\underline{x})))}(\xi)&= \frac{1}{\sqrt{2\pi \sigma_{W,N}^2 (\underline{x})}}\int_\mathbb{R}exp\Big(t_{\lfloor F(y)/\varepsilon_l\rfloor} \psi(\xi) - \frac{(y - \mu_W(\underline{x}))^2}{2\sigma_{W,N}^2(\underline{x})}\Big)dy,~\xi \in\mathbb{R}.
\end{align*}
Here, $\sigma_{W,N}^2$ is the variance function of the approximation $W^N$, which is given by
\begin{align*}
\sigma_{W,N}^2(\underline{x}) := Var(W^N(\underline{x})) = \mathbb{E}\Big(\sum_{i=1}^N \sqrt{\lambda_i} Z_i e_i(\underline{x})\Big)^2 = \sum_{i=1}^N \lambda_i e_i^2(\underline{x}),~\underline{x}\in\mathcal{D}.
\end{align*}
The function $\psi$ denotes the characteristic exponent of $l$ defined by 
\begin{align*}
\psi(\xi) = i\gamma \xi - \frac{b}{2}\xi^2 + \int_{\mathbb{R}\setminus\{0\}}e^{i\xi y}-1-i\xi y\mathds{1}_{\{|y|\leq 1\}}(y)\nu(dy),
\end{align*}
and, for any positive real number $x$, we denote by $\lfloor x \rfloor$ the largest integer smaller or equal than $x$.
\end{corollary}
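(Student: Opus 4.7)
The plan is to proceed in two stages, mirroring the two parts of the claim. The first part is a direct application of Lemma~\ref{LE:IteratedExpectationGSLP} (together with Remark~\ref{rem:ExtensionIteratedExpectation} to cover the complex-valued integrand), where I would take $g(\cdot) = \exp(i\xi \,\cdot)$ and $W_+ = F(W^N(\underline{x}))$. The key observation is that $l^{(\varepsilon_l)}$ inherits independence from $W^N$ because $l^{(\varepsilon_l)}$ is a measurable function of $l$ (and, in the piecewise constant case, of finitely many of its values), while $W^N$ is a measurable function of $W$, and $l \perp W$ by Assumption~\ref{ASS:CutProblemEigenvalues}(iii). Càdlàg continuity of $l^{(\varepsilon_l)}$ must be checked, but this is immediate for the piecewise constant construction in Remark~\ref{rem:LPApprox}. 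Applying the lemma then yields
\begin{equation*}
\phi_{l^{(\varepsilon_l)}(F(W^N(\underline{x})))}(\xi) = \mathbb{E}\bigl(\rho_{l^{(\varepsilon_l)}}(F(W^N(\underline{x})),\xi)\bigr),
\end{equation*}
which is the first assertion.

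For the second part, I would first identify $\rho_{l^{(\varepsilon_l)}}(t,\xi)$ explicitly for the piecewise constant approximation. By construction, for $t \in [0,C_F)$ one has $l^{(\varepsilon_l)}(t) = l(t_{\lfloor t/\varepsilon_l\rfloor})$, so by the Lévy--Khinchin formula (Theorem~\ref{TH:LevyKhinchinFormula1d}),
\begin{equation*}
\rho_{l^{(\varepsilon_l)}}(t,\xi) = \mathbb{E}\bigl(\exp(i\xi\, l(t_{\lfloor t/\varepsilon_l\rfloor}))\bigr) = \exp\bigl(t_{\lfloor t/\varepsilon_l\rfloor}\,\psi(\xi)\bigr).
\end{equation*}
Next, I would identify the law of $W^N(\underline{x})$: since $W^N(\underline{x}) = \mu_W(\underline{x}) + \sum_{i=1}^N \sqrt{\lambda_i}\,Z_i e_i(\underline{x})$ is a finite linear combination of i.i.d.\ standard normals, it is itself normally distributed with mean $\mu_W(\underline{x})$ and variance $\sigma_{W,N}^2(\underline{x}) = \sum_{i=1}^N \lambda_i e_i^2(\underline{x})$. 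The computation of this variance is the orthogonality-of-$(Z_i)$ calculation already written out in the statement.

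Combining the two ingredients, the expectation in the first part becomes an integral against the one-dimensional Gaussian density of $W^N(\underline{x})$:
\begin{equation*}
\phi_{l^{(\varepsilon_l)}(F(W^N(\underline{x})))}(\xi)
= \frac{1}{\sqrt{2\pi\,\sigma_{W,N}^2(\underline{x})}} \int_{\mathbb{R}} \exp\!\Bigl(t_{\lfloor F(y)/\varepsilon_l\rfloor}\,\psi(\xi) - \frac{(y-\mu_W(\underline{x}))^2}{2\sigma_{W,N}^2(\underline{x})}\Bigr)\,dy,
\end{equation*}
which is the claimed formula. The only subtle points I anticipate are technical: ensuring that $F(y)/\varepsilon_l < C_F/\varepsilon_l = N_l$ so that the floor-index lies in the admissible grid range (which follows from $F < C_F$), and verifying measurability of $y \mapsto t_{\lfloor F(y)/\varepsilon_l\rfloor}$ (clear, since $F$ is measurable and the floor function is Borel). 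Since the integrand is bounded in modulus by $1$, no integrability issue arises in applying Fubini/the transfer theorem. I do not foresee a genuine obstacle; the proof is essentially a bookkeeping of Lemma~\ref{LE:IteratedExpectationGSLP}, Lévy--Khinchin, and the Gaussianity of $W^N(\underline{x})$.
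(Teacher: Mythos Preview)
Your proposal is correct and follows essentially the same route as the paper: apply Lemma~\ref{LE:IteratedExpectationGSLP} for the first identity, compute $\rho_{l^{(\varepsilon_l)}}(t,\xi)=\exp(t_{\lfloor t/\varepsilon_l\rfloor}\psi(\xi))$, and then integrate against the Gaussian density of $W^N(\underline{x})$. The only cosmetic difference is that the paper derives the characteristic function of $l^{(\varepsilon_l)}(t)$ by decomposing it into a sum of i.i.d.\ increments and invoking the convolution theorem, whereas you observe directly that $l^{(\varepsilon_l)}(t)=l(t_{\lfloor t/\varepsilon_l\rfloor})$ and apply L\'evy--Khinchin once; your shortcut is slightly cleaner but leads to the same formula.
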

\begin{proof}
As in the proof of Corollary \ref{COR:CharFctGSLP}, we use Lemma \ref{LE:IteratedExpectationGSLP} to calculate
\begin{align*}
\phi_{l^{(\varepsilon_l)}(F(W^N(\underline{x})))}(\xi)&= \mathbb{E}\Big(\exp(i\xi l^{(\varepsilon_l)}(F(W^N(\underline{x}))))\Big) \\
&=\mathbb{E}(\rho_{l^{(\varepsilon_l)}}(F(W^N(\underline{x})),\xi)),~\xi\in\mathbb{R}.
\end{align*}
In the next step, we compute the charateristic function of the approximation $l^{(\varepsilon_l)}$ of the Lévy process constructed as described in Remark \ref{rem:LPApprox}. We use the independence and stationarity of the increments of the Lévy process to obtain
\begin{align*}
l^{(\varepsilon_l)} (t) &= \sum_{i=1}^{N_l} \mathds{1}_{[t_{i-1},t_i)} (t) l(t_{i-1})= \sum_{i=1}^{N_l} \mathds{1}_{[t_{i-1},t_i)} (t) \sum_{j=1}^{i-1}l(t_j)-l(t_{j-1})
\stackrel{\mathcal{D}}{=} \sum_{k=1}^{\lfloor t/\varepsilon_l \rfloor}l_k^{\varepsilon_l},
\end{align*}
where $(l_k^{\varepsilon_l},~k\in\mathbb{N})$ are i.i.d. random variables following the distribution of $l(\varepsilon_l)$ and we denote by $\stackrel{\mathcal{D}}{=}$ equivalence of the corresponding probability distributions. The convolution theorem (see e.g. \cite[Lemma 15.11]{WTheorie}) yields the representation
\begin{align*}
\phi_{l^{(\varepsilon_l)}(t)}(\xi) &= \prod_{k=1}^{\lfloor t/\varepsilon_l \rfloor} \mathbb{E}(\exp(i\xi l_k^{(\varepsilon_l)}))
= \prod_{k=1}^{\lfloor t/\varepsilon_l \rfloor} \exp(\varepsilon_l \psi(\xi))
 = \exp(\lfloor t/\varepsilon_l \rfloor \varepsilon_l \psi(\xi))
 = \exp(t_{\lfloor t/\varepsilon_l \rfloor} \psi(\xi)),
\end{align*}
for $t\in[0,C_F)$ and $\xi \in \mathbb{R}$. Therefore, we obtain 
 \begin{align*}
 \phi_{l^{(\varepsilon_l)}(F(W^N(\underline{x})))}(\xi) &= \mathbb{E}(\exp(t_{\lfloor F(W^N(\underline{x}))/\varepsilon_l \rfloor} \psi(\xi))),~\xi\in\mathbb{R}.
 \end{align*}
The second formula for the characteristic function of the approximated field now follows from the fact that $W^N(\underline{x})\sim \mathcal{N}(\mu_W(\underline{x}),\sigma_{W,N}^2(\underline{x}))$.
\end{proof}


\section{The covariance structure of GSLFs}\label{sec:Covariance}
In many modeling applications one aims to use a random field model that mimics a specific covariance structure which is, for example, obtained from empirical data. In such a situation it is useful to have access to the theoretical covariance function of the random fields used in the model. Therefore, we derive a formula for the covariance function of the GSLF in the following section.

\begin{lemma}\label{LE:CovGSLP}
Assume $W$ is a GRF and $l$ is an independent Lévy process with existing first and second moment. We deonte by $\mu_W$, $\sigma_W^2$ and $q_W$, the mean, variance and covariance function of $W$ and by $\mu_l(t)=\mathbb{E}(l(t))$, $\mu_l^{(2)}(t):=\mathbb{E}(l(t)^2)$ the functions for the first second moment of the Lévy process $l$. For $\underline{x}\neq \underline{x}'\in\mathcal{D}$, the covariance function of the GSLF $L$ is given by
\begin{align*}
q_L(\underline{x},\underline{x}') &=\int_{\mathbb{R}^2} c_l(F(u),F(v))f_{(W(\underline{x}),W(\underline{x}'))}(u,v)d(u,v) \\
&~-\int_\mathbb{R}\mu_l(F(u))f_{W(\underline{x})}(u)du\int_\mathbb{R}\mu_l(F(v))f_{W(\underline{x}')}(v)dv,
\end{align*}
where we define
\begin{align*}
f_{W(\underline{x})}(u)&:=\frac{exp\Big(-\frac{(u-\mu_W(\underline{x}))^2}{2\sigma_W^2(\underline{x})}\Big)}{\sqrt{2\pi\sigma_W^2(\underline{x})}},\\
\Sigma_W(\underline{x},\underline{x}')&:=\begin{bmatrix}
\sigma_W^2(\underline{x}) & q_W(\underline{x},\underline{x}') \\ 
q_W(\underline{x},\underline{x}')  & \sigma_W^2(\underline{x}')
\end{bmatrix} ,\\
f_{(W(\underline{x}),W(\underline{x}'))}(u,v) &:= \frac{exp\Big(-\frac{1}{2}\begin{pmatrix}u-\mu_W(\underline{x})\\v-\mu_W(\underline{x}')\end{pmatrix}^T\Sigma_W(\underline{x},\underline{x}')^{-1}\begin{pmatrix}u-\mu_W(\underline{x})\\v-\mu_W(\underline{x}')\end{pmatrix}\Big),}{\sqrt{(2\pi)^2\big(\sigma_W^2(\underline{x})\sigma_W^2(\underline{x}') - q_W^2(\underline{x},\underline{x}')\big)}}\\
c_l(u,v) &:= \mu_l(|u-v|)\mu_l(u\wedge v) + \mu_l^{(2)}(u\wedge v),
\end{align*}
for $u,v\in\mathbb{R}$ with $u\wedge v:=\min(u,v)$. For $\underline{x} = \underline{x}'\in\mathcal{D}$, the pointwise variance is given by 
\begin{align*}
\sigma_L^2(\underline{x}) = q_L(\underline{x},\underline{x}) = \int_\mathbb{R}\mu_\ell^{(2)}(F(u))f_{W(\underline{x})}(u)du - \Big(\int_\mathbb{R}\mu_\ell(F(u))f_{W(\underline{x})}(u)du \Big)^2.
\end{align*}
\end{lemma}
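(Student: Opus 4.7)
The plan is to start from the defining identity
\begin{equation*}
q_L(\underline{x},\underline{x}') = \mathbb{E}(L(\underline{x})L(\underline{x}')) - \mathbb{E}(L(\underline{x}))\mathbb{E}(L(\underline{x}')),
\end{equation*}
compute both terms via an iterated-expectation argument of the type of Lemma \ref{LE:IteratedExpectationGSLP}, and then evaluate the resulting expectations as Gaussian integrals against the (marginal resp.\ joint) density of the GRF $W$.

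For the marginals, I would apply Lemma \ref{LE:IteratedExpectationGSLP} with $g(t)=t$ and $W_+=F(W(\underline{x}))$. The lemma is stated for continuous $g$, but the conditioning argument extends to this unbounded $g$ under the first-moment assumption on $l$; this yields $\mathbb{E}(L(\underline{x}))=\mathbb{E}(\mu_l(F(W(\underline{x}))))$, and since $W(\underline{x})\sim\mathcal{N}(\mu_W(\underline{x}),\sigma_W^2(\underline{x}))$, integrating against the one-dimensional Gaussian density produces exactly the subtracted product-of-marginals term in the claimed formula.

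For the cross-moment, I would use the two-parameter version of Lemma \ref{LE:IteratedExpectationGSLP} flagged in Remark \ref{rem:ExtensionIteratedExpectation}, applied to the random field $(t_1,t_2)\mapsto l(t_1)l(t_2)$ (jointly measurable and càdlàg in each variable) and the $\mathbb{R}_+^2$-valued random vector $(F(W(\underline{x})),F(W(\underline{x}')))$, to obtain
\begin{equation*}
\mathbb{E}(L(\underline{x})L(\underline{x}')) = \mathbb{E}\bigl(\tilde m(F(W(\underline{x})),F(W(\underline{x}')))\bigr),\qquad \tilde m(t_1,t_2):=\mathbb{E}(l(t_1)l(t_2)).
\end{equation*}
The key algebraic step is to evaluate $\tilde m$: for $0\leq t_1\leq t_2$, decomposing $l(t_2)=l(t_1)+(l(t_2)-l(t_1))$ and using independent and stationary increments of $l$ gives
\begin{equation*}
\mathbb{E}(l(t_1)l(t_2)) = \mathbb{E}(l(t_1)^2) + \mathbb{E}(l(t_1))\,\mathbb{E}(l(t_2-t_1)) = \mu_l^{(2)}(t_1)+\mu_l(t_1)\mu_l(t_2-t_1).
\end{equation*}
Symmetrising in $t_1,t_2$ via $t_1\wedge t_2$ and $|t_1-t_2|$ recovers $\tilde m(t_1,t_2)=c_l(t_1,t_2)$. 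For $\underline{x}\neq \underline{x}'$ the joint law of $(W(\underline{x}),W(\underline{x}'))$ is a non-degenerate bivariate Gaussian with covariance matrix $\Sigma_W(\underline{x},\underline{x}')$, so the expectation on the right-hand side is exactly the double Gaussian integral against $f_{(W(\underline{x}),W(\underline{x}'))}$ appearing in the statement. For the diagonal case $\underline{x}=\underline{x}'$, where the joint density would be degenerate, I would instead apply the one-parameter lemma directly with $g(t)=t^2$ to get $\mathbb{E}(L(\underline{x})^2)=\mathbb{E}(\mu_l^{(2)}(F(W(\underline{x}))))$ and combine with the mean formula to read off $\sigma_L^2(\underline{x})$.

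The main obstacle is the rigorous justification of the two-parameter iterated expectation for the unbounded functional $(t_1,t_2)\mapsto l(t_1)l(t_2)$, since Lemma \ref{LE:IteratedExpectationGSLP} and Remark \ref{rem:ExtensionIteratedExpectation} are formulated for bounded continuous test functions. The cleanest route is truncation: apply the bounded-continuous version to $g_K(t_1,t_2):=(t_1\wedge K)(t_2\wedge K)$ (or a smooth variant) and pass to the limit $K\to\infty$ using dominated convergence, with the dominating function controlled by the second-moment hypothesis $\mathbb{E}(l(t)^2)<\infty$ together with Cauchy--Schwarz. Everything else reduces to standard manipulations of Gaussian densities and Lévy-process increments.
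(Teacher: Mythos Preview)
Your proposal is correct and follows essentially the same route as the paper: decompose $q_L$ into the cross-moment minus the product of means, apply Lemma~\ref{LE:IteratedExpectationGSLP} (and its two-parameter extension from Remark~\ref{rem:ExtensionIteratedExpectation}) to reduce to expectations of $\mu_l$, $\mu_l^{(2)}$ and $c_l$ applied to the Gaussian input, compute $c_l$ via the independent/stationary increment decomposition $l(t_2)=l(t_1)+(l(t_2)-l(t_1))$, and then write the remaining expectations as integrals against the one- and two-dimensional Gaussian densities. You are in fact slightly more careful than the paper, which applies the iterated-expectation lemma directly to the unbounded functionals without the truncation argument you outline.
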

\begin{proof}
We compute using Lemma  \ref{LE:IteratedExpectationGSLP}, for $\underline{x}\in\mathcal{D}$
\begin{align*}
\mu_L(\underline{x}):=\mathbb{E}(L(\underline{x})) = \mathbb{E}\Big(l(F(W(\underline{x})))\Big)  = \mathbb{E}\Big(\mu_l(F(W(\underline{x}))))\Big).
\end{align*}
Further, we calculate for $\underline{x},\underline{x}'\in\mathcal{D}$
\begin{align*}
q_L(\underline{x},\underline{x}')&=\mathbb{E}\Big((L(\underline{x}) - \mu_L(\underline{x}))(L(\underline{x}') - \mu_L(\underline{x}'))\Big)\\
&= \mathbb{E}(L(\underline{x})L(\underline{x}')) - \mu_L(\underline{x})\mu_L(\underline{x}').
\end{align*}
Next, we consider $0\leq t_1\leq t_2$ and use the fact that $l$ has stationary and independent increments to compute
\begin{align*}
\mathbb{E}(l(t_1)l(t_2)) = \mathbb{E}\big((l(t_2)-l(t_1))l(t_1)\big) + \mathbb{E}(l(t_1)^2) = \mu_l(t_2-t_1)\mu_l(t_1) + \mu_l^{(2)}(t_1).
\end{align*}
Similarly, we obtain for $0\leq t_2\leq t_1$
\begin{align*}
\mathbb{E}(l(t_1)l(t_2)) = \mathbb{E}\big((l(t_1)-l(t_2))l(t_2)\big) + \mathbb{E}(l(t_2)^2) = \mu_l(t_1-t_2)\mu_l(t_2) + \mu_l^{(2)}(t_2),
\end{align*}
and, hence, it holds for general $t_1,t_2\geq 0$:
\begin{align*}
c_l(t_1,t_2):=\mathbb{E}(l(t_1)l(t_2)) = \mu_l(|t_1-t_2|)\mu_l(t_1 \wedge t_2) + \mu_l^{(2)}(t_1\wedge t_2).
\end{align*}
Another application of Lemma \ref{LE:IteratedExpectationGSLP} and Remark \ref{rem:ExtensionIteratedExpectation} yields
\begin{align*}
\mathbb{E}(L(\underline{x})L(\underline{x}')) = \mathbb{E}(l(F(W(\underline{x})))l(F(W(\underline{x}')))) = \mathbb{E}(c_l(F(W(\underline{x})),F(W(\underline{x}')))).
\end{align*}
Putting these results together, we end up with
\begin{align*}
q_L(\underline{x},\underline{x}') &=  \mathbb{E}(L(\underline{x})L(\underline{x}')) - \mu_L(\underline{x})\mu_L(\underline{x}')\\
& =  \mathbb{E}(c_l(F(W(\underline{x})),F(W(\underline{x}')))) - \mathbb{E}(\mu_l(F(W(\underline{x}))))\mathbb{E}(\mu_l(F(W(\underline{x'})))),
\end{align*}
for $\underline{x},~\underline{x}'\in\mathcal{D}$. The assertion now follows from the fact that $W(\underline{x})\sim \mathcal{N}(\mu_W(\underline{x}),\sigma_W^2(\underline{x}))$ and $(W(\underline{x}),W(\underline{x}'))^T\sim \mathcal{N}_2((\mu_W(\underline{x}),\mu_W(\underline{x}'))^T, \Sigma_W(\underline{x},\underline{x}'))$, for $\underline{x}\neq\underline{x}'\in \mathcal{D}$, together with $c_\ell(t,t) = \mu_\ell^{(2)}(t)$ for $t\geq 0$.
\end{proof}


\section{Numerical examples}\label{sec:numerics}
In this section, we present numerical experiments on the theoretical results presented in the previous sections. The aim is to investigate the results of existing numerical methods and to illustrate the theoretical properties of the GSLF which have been proven in the previous sections, e.g. the pointwise distribution of the approximated fields or the quality of this approximation (see Theorem \ref{TH:GSLPAPPR}). Further, the presented numerical experiments and methods may also be useful for fitting the GSLF to existing data in various applications.

\subsection{Pointwise characteristic function}\label{subsec:PWCharFctvalid}
Corollary \ref{COR:CharFctGSLP} gives access to the pointwise characteristic function of the GSLF $L(\underline{x})=l(F(W(\underline{x}))),~\underline{x}\in\mathcal{D}$, with a Lévy process $l$ and a GRF $W$, which is independent of $l$. Using the characteristic function and the Fourier inversion (FI) method (see \cite{NoteOnTheInversionTheorem}) we may compute the pointwise density function of the GSLF. Note that in both of these steps, the application of Corollary \ref{COR:CharFctGSLP} and of the Fourier inversion theorem, numerical integration is necessary which may be inaccurate or computationally expensive. In this subsection, we choose specific Lévy processes together with a GRF and compare the computed density function of the corresponding GSLF with the histogram of samples of the simulated field. To be precise, we choose a Matérn-1.5-GRF, a Gamma process, set $F=|\cdot|+1$ and consider the GSLF $L(\underline{x})=l(F(W(\underline{x}))$ on $\mathcal{D}=[0,1]^2$. The distributions and the corresponding density functions are presented in Figure \ref{FIG:NumExPWDist}. In line with our expectations, the FI approach perfectly matches the true distribution of the field at $(1,1)$.
\begin{figure}[ht]
\begin{center}
\subfigure{\includegraphics[scale=0.5]{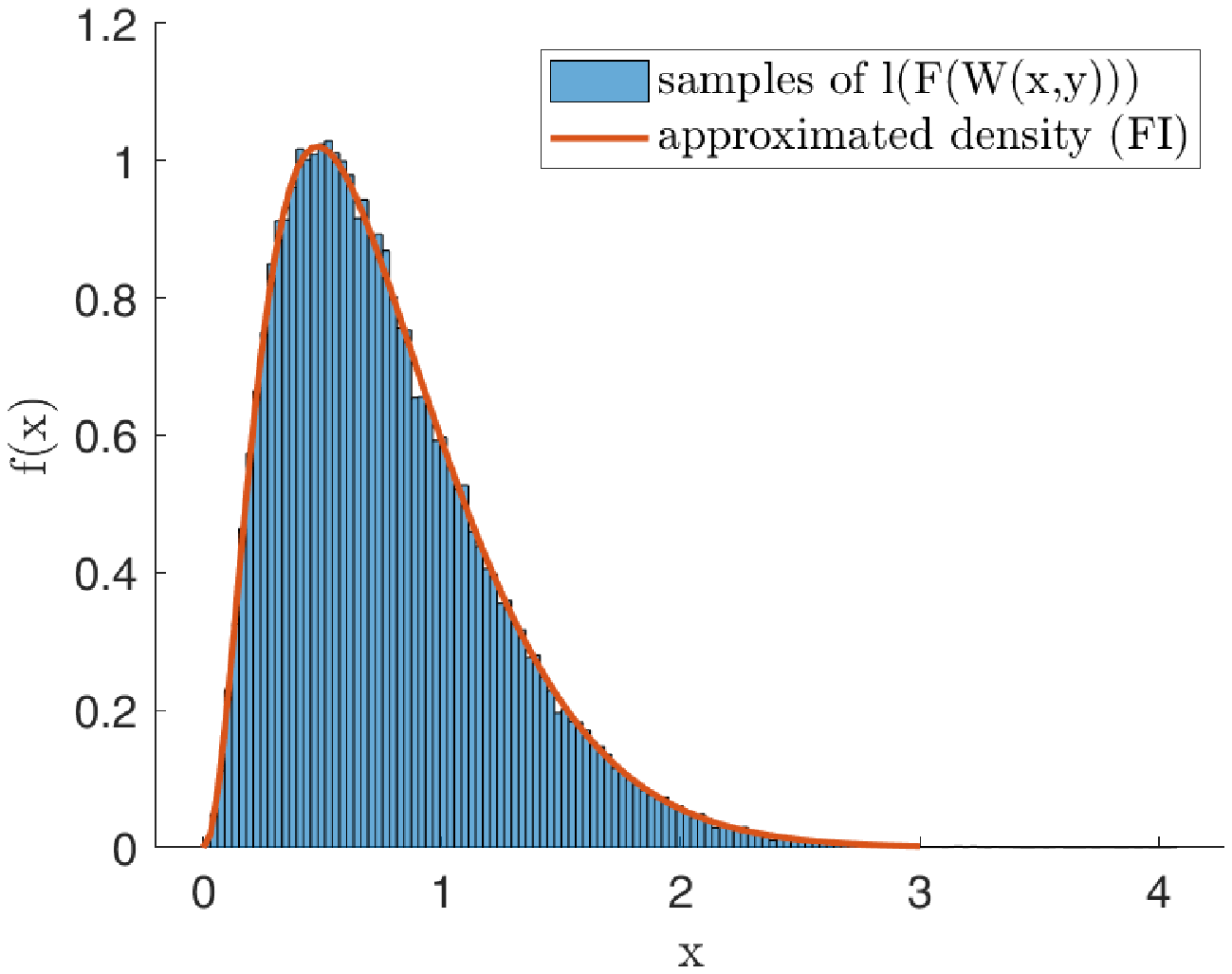}}
\subfigure{\includegraphics[scale=0.5]{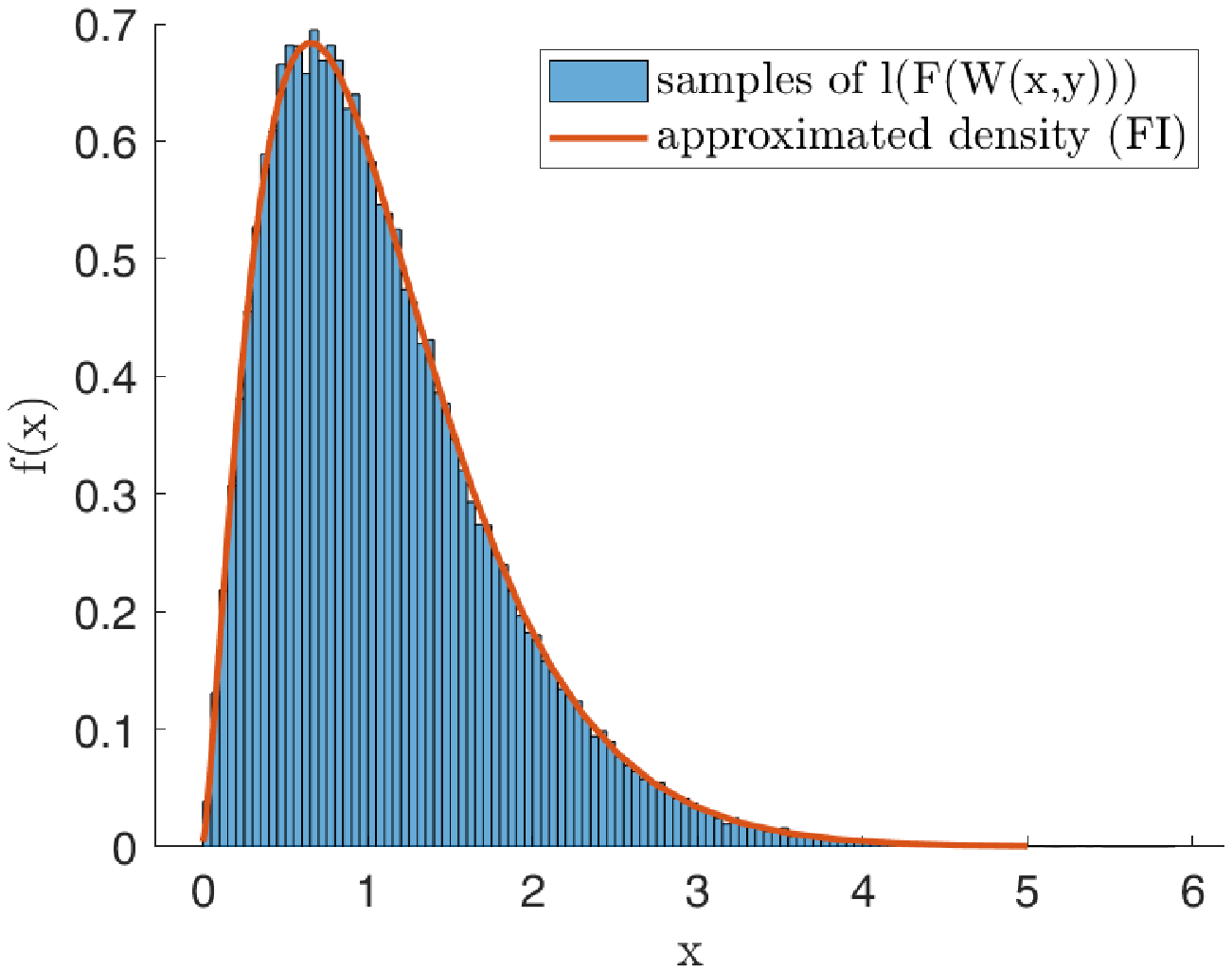}}
\caption{$10^5$ samples of the GSLF evaluated at $(1,1)$ and the corresponding density functions approximated via FI. Left: Gamma(3,10)-process and a Matérn GRF with $\sigma=2$. Right: Gamma(2,4)-process with Matérn GRF with $\sigma=1.5$.}
\label{FIG:NumExPWDist}
\end{center}
\end{figure}

\subsection{Pointwise distribution of approximated fields}\label{subsec:PWCharFctvalidApprFields}

In Subsection \ref{subsec:PWCharFctvalid} we presented a numerical example regarding the pointwise distribution of the GSLF. In applications, it is not possible to draw samples from the exact GSLF and, hence, one has to use  approximations instead. The effect of such an approximation on the pointwise distribution of the random field has been investigated theoretically in Subsection \ref{subsec:ApprLevyPWDist}. In this section, we aim to provide a numerical example in order to visualize the distributional effect of sampling from an approximation of the GSLF. For a specific choice of the Lévy process $l$, the transformation function $F$ and the GRF $W$, we use Corollary \ref{COR:CharFctApprGSLP} and the FI method to compute the pointwise distribution of the approximated field $l^{(\varepsilon_l)}(F(W^N(\underline{x}))) \approx l(F(W(\underline{x})))$ for different approximation parameters $\varepsilon_l$ (resp. $N$) of the Lévy process (resp. the GRF). The computed densities are then compared with samples of the approximated field. We set $\mathcal{D}=[0,1]^2$ and consider the evaluation point $\underline{x}=(0.4,0.6)$. The GRF $W$ is given by the KLE
\begin{align*}
W=\sum_{k=1}^\infty \sqrt{\lambda_k} e_k(x,y)Z_k,
\end{align*}
where the eigenbasis is defined by
\begin{align*}
e_k(x,y)=2\, sin\left(\pi kx\right)sin\left(\pi ky\right),~\lambda_k=10(k^2\pi^2+0.1^2)^{-\nu},
\end{align*}
with $\nu=0.6$ (see \cite[Appendix A]{fasshauer2015kernel}). Further, we set $F(x)=1+\min(|x|,30)$ and choose $l$ to be a Gamma(3,10)-process. The threshold $30$ in the definition of $F$ is large enough to have no effect in our numerical example since the absolute value of $W$ does not exceed 30 in all considered samples. We use piecewise constant approximations $l^{(\varepsilon_l)}$ of the Lévy process $l$ on an equidistant grid with stepsize $\varepsilon_l>0$ (see Remark \ref{rem:LPApprox}) and approximate the GRF $W$ by the truncated KLE with varying truncation indices $N$. To be more precise, we choose 6 different approximation levels for these two approximation parameters, as described in Table \ref{TAB:PWDistApprFieldApprParams}.
\begin{table}
\begin{center}
\begin{tabular}{|c|c|c|c|c|c|c|}
 \hline 
 level & 1 & 2 & 3 & 4 & 5 & 6 \\ 
 \hline 
 $\varepsilon_l$ & 1 & 0.5 & 0.01 & 0.001 & 0.0001 & 1e-7 \\ 
 \hline 
 $N$ & 3 & 5 & 250 & 2500 & 25000 & 1e7 \\ 
 \hline  
 \end{tabular} 
 \caption{Discretization parameters for the approximation of the GSLF.}
\label{TAB:PWDistApprFieldApprParams}
\end{center} 
 \end{table}
For each discretization level, we compute the characteristic function using Corollary \ref{COR:CharFctApprGSLP}, approximate the density function via FI and compare it with $10^5$ samples of the approximated field evaluated at the point $(0.4,0.6)$. The results are given in Figure \ref{FIG:NumExPWDistApprLevels}.
\begin{figure}[ht]
\begin{center}
\subfigure{\includegraphics[scale=0.33]{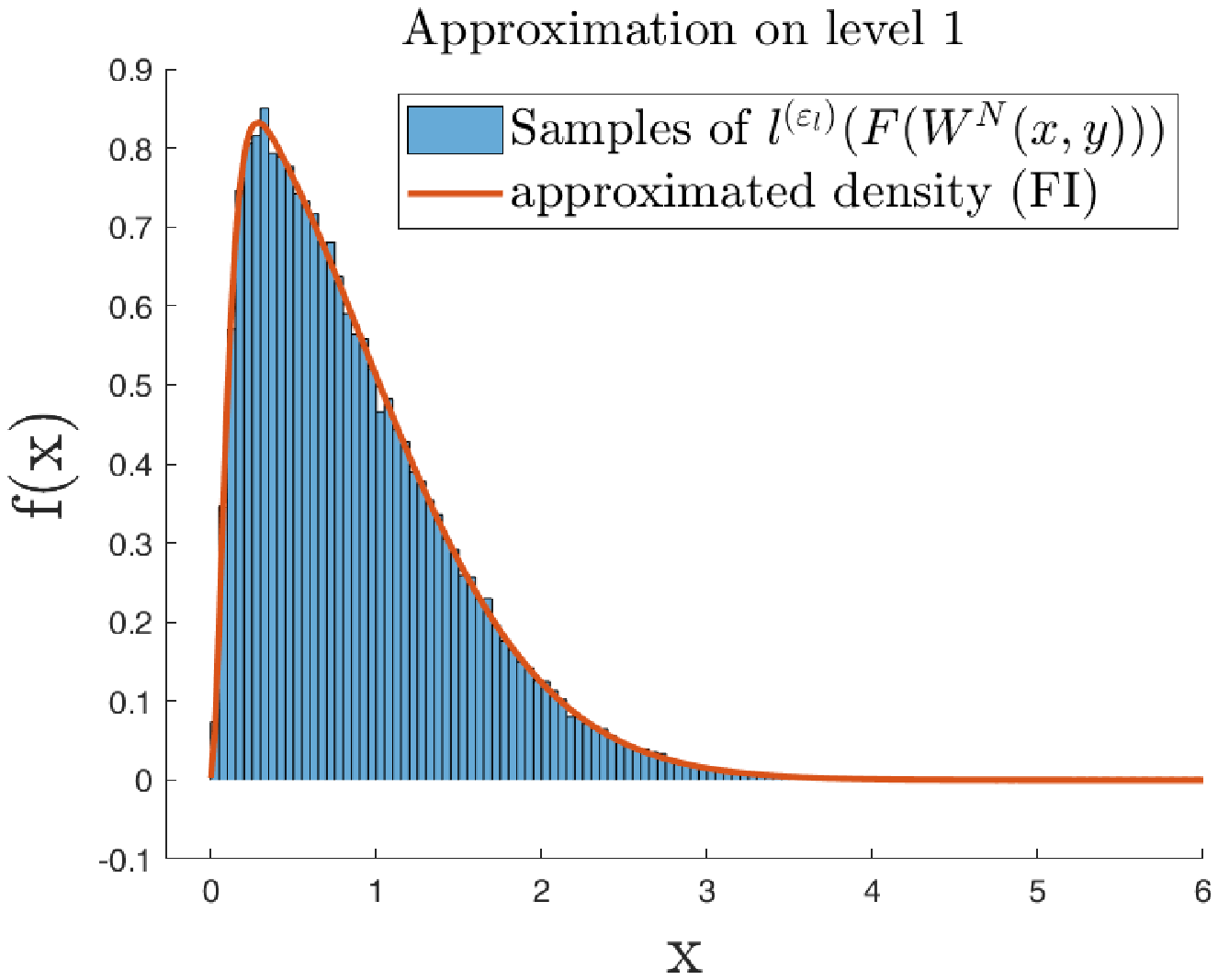}}
\subfigure{\includegraphics[scale=0.33]{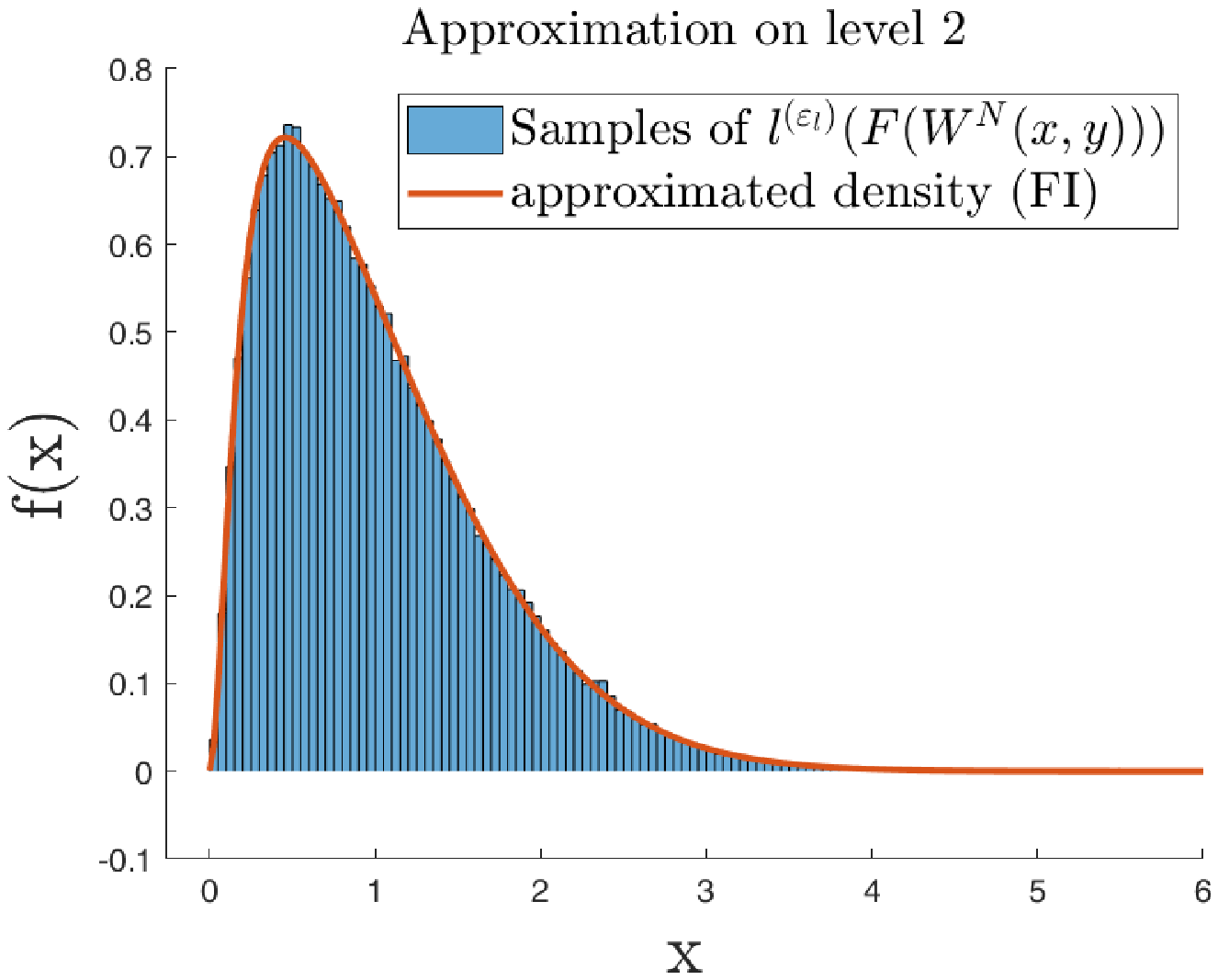}}
\subfigure{\includegraphics[scale=0.33]{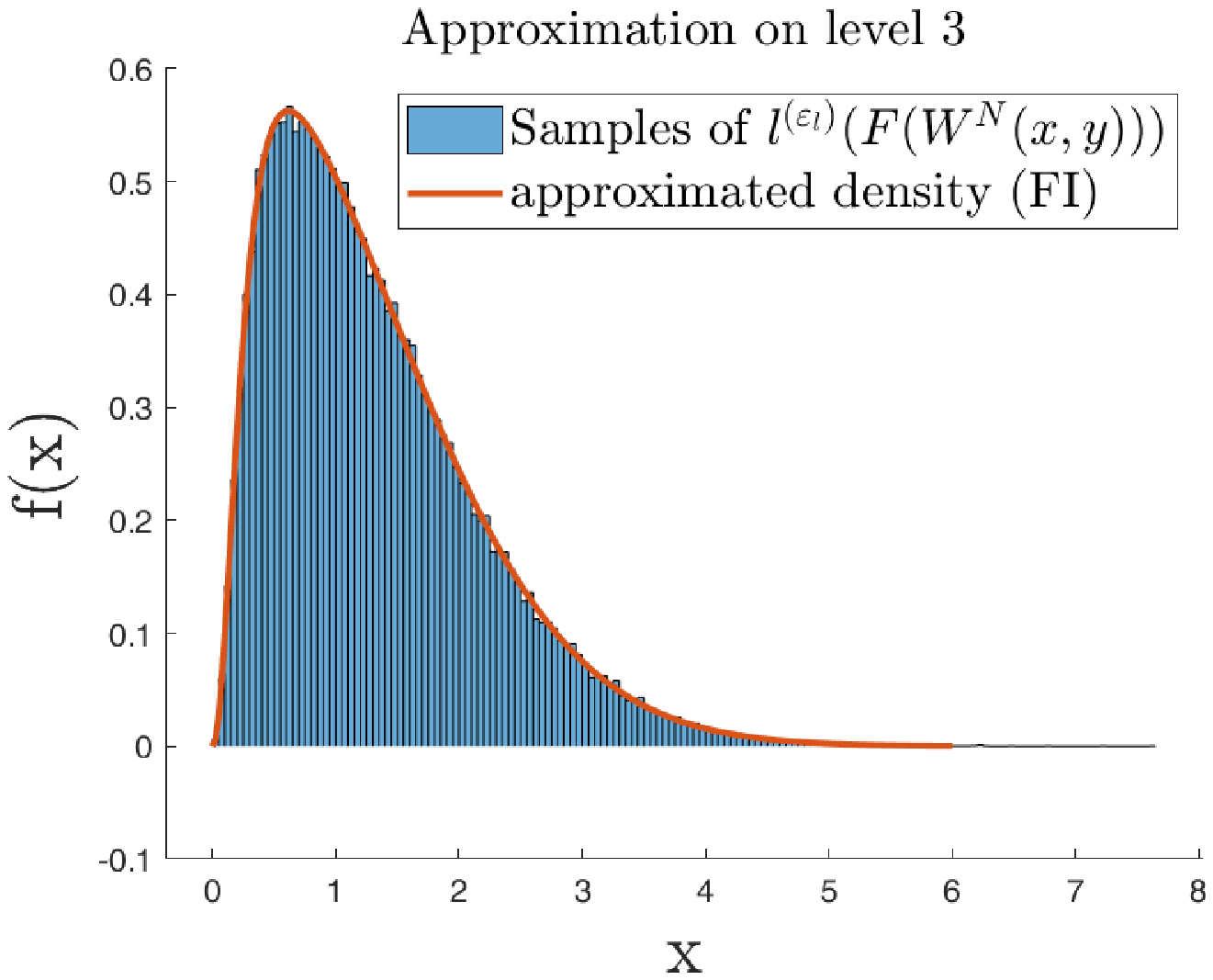}}\\
\subfigure{\includegraphics[scale=0.33]{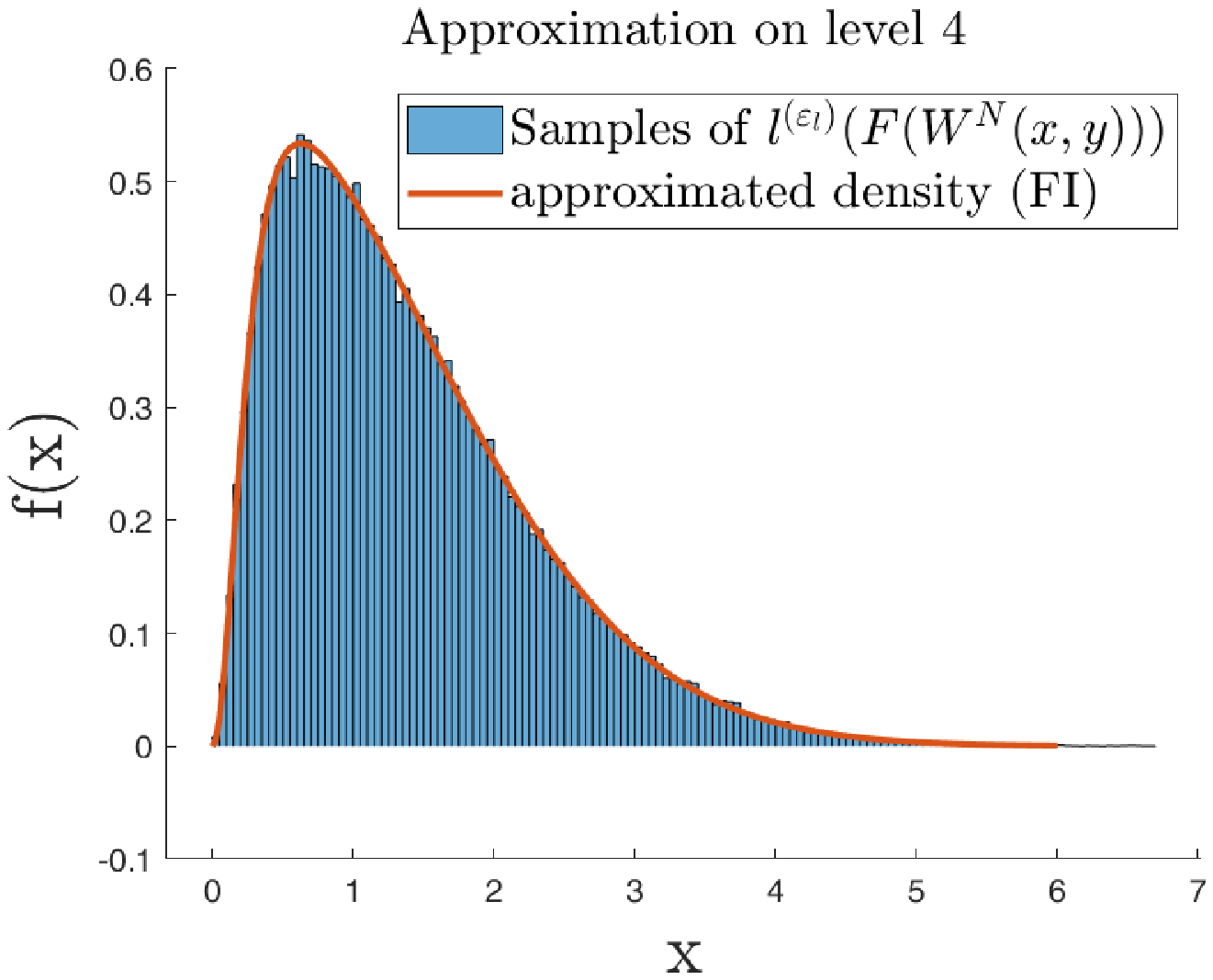}}
\subfigure{\includegraphics[scale=0.33]{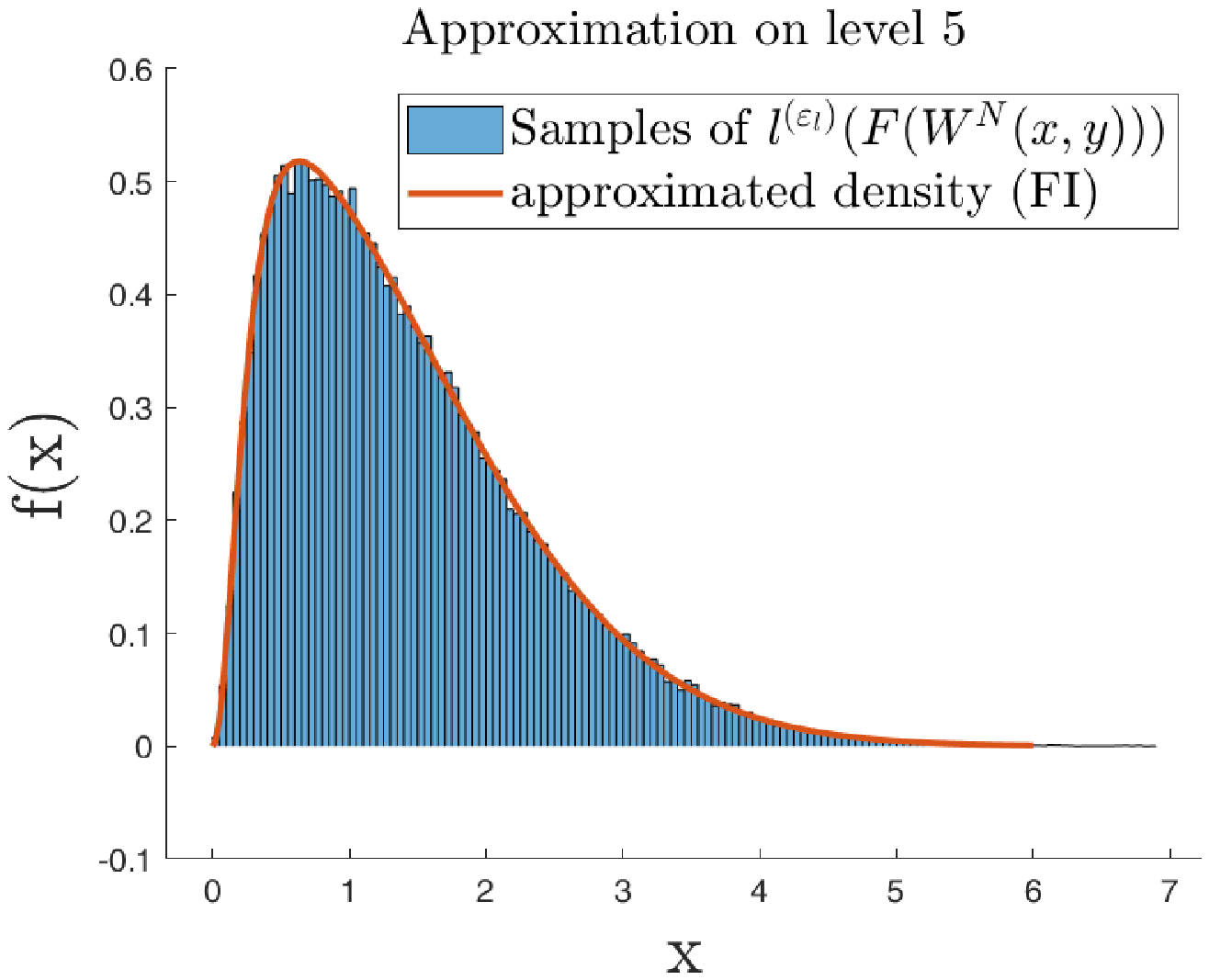}}
\subfigure{\includegraphics[scale=0.33]{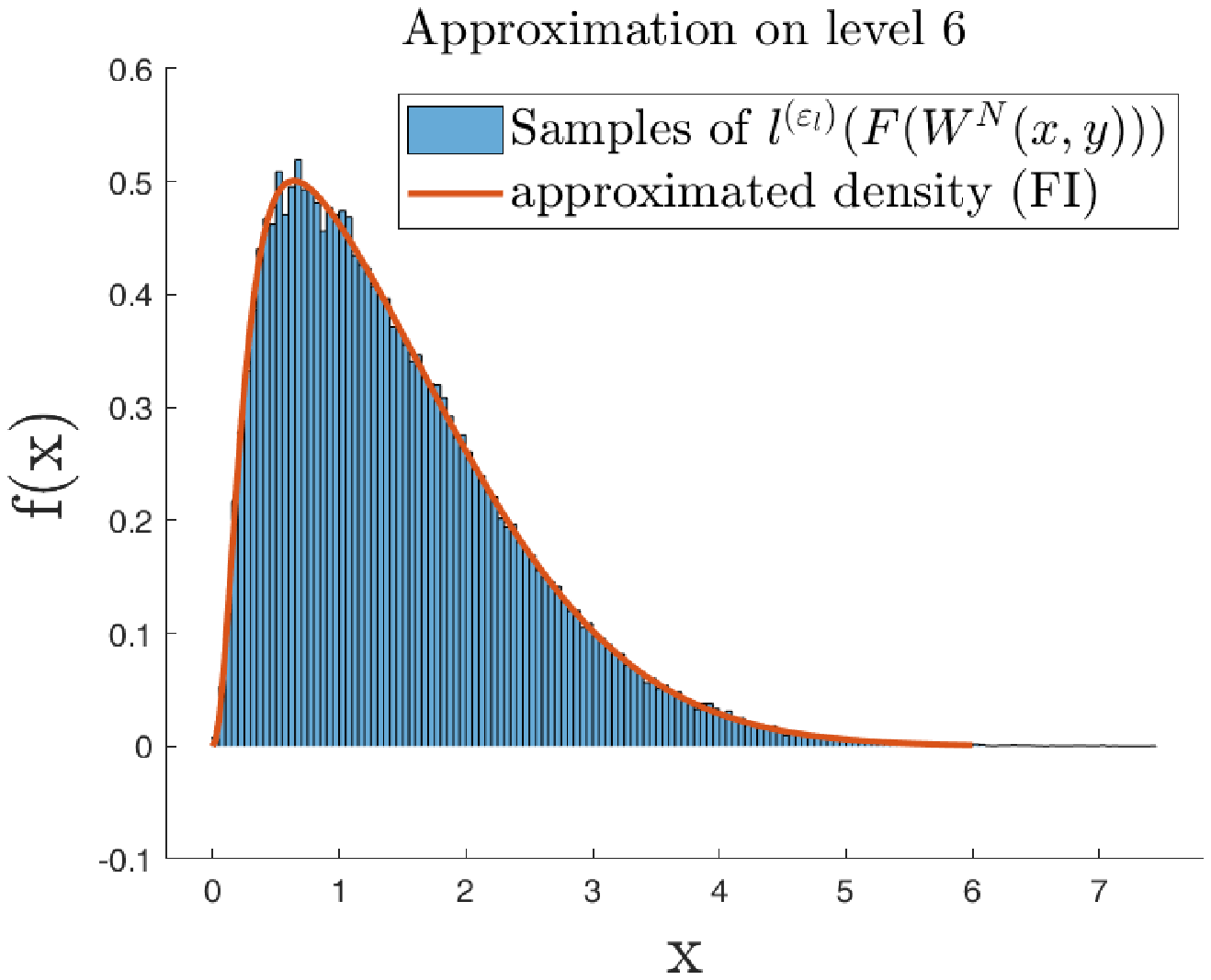}}
\caption{Approximated pointwise densities (FI) of the GSLF on the different discretization levels together with samples of the corresponding approximated GSLF.}
\label{FIG:NumExPWDistApprLevels}
\end{center}
\end{figure}
As expected from Subsection \ref{subsec:PWCharFctvalid}, we see that the densities of the pointwise distribution of the approximated GSLF, which are approximated by FI, fit the actual samples of the approximated field accurately. In order to get a better impression of the influence of the approximation on the different levels, Figure \ref{FIG:NumExPWDistApprDensitiesOverview} shows the densities of the evaluated approximated GSLF on the different levels, together with the density of the exact field. We see how the densities of the approximted GSLF converge to the density of the exact field for $\varepsilon_l\rightarrow 0$ and $N \rightarrow \infty$. The results also show that the effect of the approximation of the GSLF should not be unterestimated: on the lower levels, we obtain comparatively large deviations of the pointwise densities from the density of the exact field, which should be taken into account in applications. Obviously, the effect of the approximation depends heavily on the specific choice of the Lévy process and the underlying GRF.
\begin{figure}[ht]
\begin{center}
\subfigure{\includegraphics[scale=0.7]{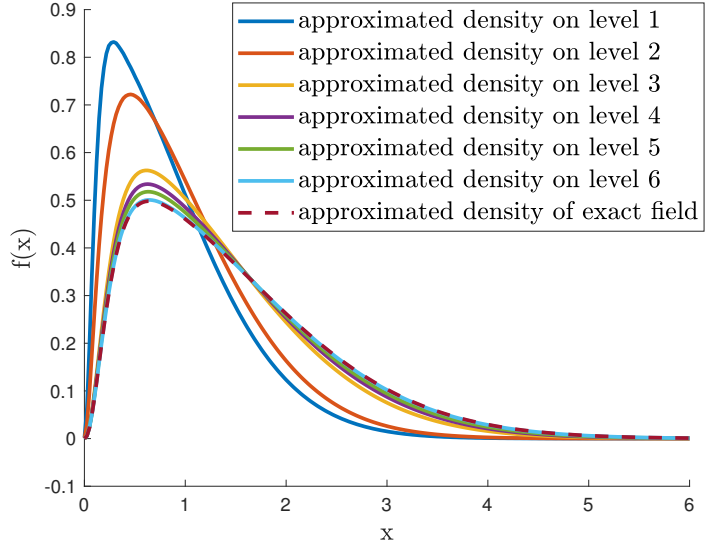}}
\caption{Approximated pointwise densities (FI) of the GSLF on the different discretization levels together with the pointwise density of the actual GSLF.}
\label{FIG:NumExPWDistApprDensitiesOverview}
\end{center}
\end{figure}

\subsection{Numerical approximation of the GSLF}
    
In Section \ref{sec:approximation}, we considered approximations $l^{(\varepsilon_l)}\approx l$ of the Lévy process and $W^N\approx W$ of the GRF and derived an error bound on the corresponding approximation $l^{(\varepsilon_l)}(F(W^N(\underline{x})))\approx l(F(W(\underline{x})))$ (see Theorem \ref{TH:GSLPAPPR}). In fact, under Assumption \ref{ASS:CutProblemEigenvalues}, if we choose the approximation parameter $N$ of the GRF $W$ such that $R(N) \sim \varepsilon_l^{1/\delta}$ with $R(N)$ from Equation  \eqref{EQ:ApproximationErrorGRFGeneral}, we obtain an overall approximation error which is dominated by $\varepsilon_l^{1/p}$:
\begin{align*}
\|l^\apprlevy(F(W^N)) - l(F(W))\|_{L^p(\Omega;L^p(\mathcal{D}))} = \mathcal{O}(\varepsilon_l^{1/p}),~\varepsilon_l\rightarrow 0.
\end{align*}
In this section, we present numerical experiments to showcase this approximation result.

We set $F(x)=\min(|x|,30)$ and consider the domain $\mathcal{D}=[0,1]^2$. Let
\begin{align*}
W(x,y)=\sum_{k=1}^\infty \sqrt{\lambda_k} e_k(x,y)Z_k,
\end{align*}
be a GRF with corresponding eigenbasis
\begin{align}\label{EQ:NumExGSLPApprEigenbasis}
e_k(x,y)=2 sin\left(\pi kx\right)sin\left(\pi ky\right),~\lambda_k=100\,(k^2\pi^2+0.2^2)^{-\nu}, ~\nu>0.5.
\end{align}
With this choice, Assumption \ref{ASS:CutProblemEigenvalues} \textit{i} is satisfied with $\alpha = 1$ and $0<\beta < 2\nu-1$ , since
\begin{align*}
\sum_{k=1}^\infty \lambda_k k^\beta\leq C\sum_{k=1}^\infty k^{\beta-2\nu}<\infty,
\end{align*}
where we used that $\lambda_k\leq C k^{-2\nu}$ for $k\in\mathbb{N}$. Hence, we obtain by Lemma \ref{LE:StrongApprGRFKLE}
\begin{align*}
\|W-W^N\|_{L^n(\Omega;L^\infty(\mathcal{D}))}\leq C(D,n)N^{-\frac{\beta}{2}},
\end{align*}
for $0<\beta<2\nu-1$, i.e. $R(N)= N^{-\frac{\beta}{2}}$ in Equation \eqref{EQ:ApproximationErrorGRFGeneral} and Theorem \ref{TH:GSLPAPPR}. In our experiments, we choose the Lévy subordinator $l$ to be a Poisson or Gamma process. Approximations $l^{(\varepsilon_l)}\approx l$ of these processes satisfying of Assumption \ref{ASS:CutProblemEigenvalues} \textit{iii} may be obtained by piecewise constant extensions of values of the process on a grid with stepsize $\varepsilon_l$ (see Remark \ref{rem:LPApprox}). For these processes, we obtain $\delta=1$ in \eqref{EQ:MomentsLevy} (see Remark \ref{rem:LPShortTimeBehaviour} and \cite[Section 7]{SGRFPDE}). Overall, Theorem \ref{TH:GSLPAPPR} yields the error bound
\begin{align*}
\|l^\apprlevy(F(W^N)) - l(F(W))\|_{L^p(\Omega;L^p(\mathcal{D}))} = C(\varepsilon_l^\frac{1}{p} + N^{-\frac{\beta}{2p}}),
\end{align*}
for $0<\beta<2\nu-1$. Hence, in order to equilibrate the error contributions from the GRF approximation and the approximation of the Lévy process, one should choose the cut-off index $N$ of the KLE approximation of the GRF $W$ according to 
\begin{align}\label{EQ:NumExGSLPApprChoiceN}
N \simeq \varepsilon_l^{-\frac{2}{2\nu-1}},
\end{align}
which then implies
\begin{align}\label{EQ:NumExGSLPApprErrorBound}
\|l^\apprlevy(F(W^N)) - l(F(W))\|_{L^p(\Omega;L^p(\mathcal{D}))} \leq  C\varepsilon_l^\frac{1}{p}.
\end{align}
In our experiments, we set the approximation parameters of the Lévy process to be $\varepsilon_l = 2^{-\ell}$, for $\ell=2,\dots,10$ and the cut-off indices of the GRF are choosen according to \eqref{EQ:NumExGSLPApprChoiceN}. In order to verify \eqref{EQ:NumExGSLPApprErrorBound}, we draw 150 samples of the random variable 
\begin{align*}
l^\apprlevy(F(W^N)) - l(F(W)),
\end{align*}
for the described approximation parameters to estimate the corresponding $L^p$ norm of the approximation error for $p \in\{1,2,2.5,3,3.3\}$ which is expected to behave as $\mathcal{O}(\varepsilon_l^{1/p})$. 

In our first example, we set $l$ to be a Poisson(8)-process and $\nu=2.5$ in Equation~\eqref{EQ:NumExGSLPApprEigenbasis}. For each sample, we compute a reference field by taking $150$ summands in the KLE of the GRF $W$ and use an exact sample of a Poisson process computed by the Uniform Method (see \cite[Section 8.1.2]{LevyProcessesInFinance}). The resulting estimates for the $L^p$ approximation error are plotted against $\varepsilon_l$ for $p\in\{1,2,2.5,3.3\}$, which is illustrated in the Figure \ref{FIG:NumExApprGSLPPoiss}.
\begin{figure}[ht]
\begin{center}
\subfigure{\includegraphics[scale=0.24]{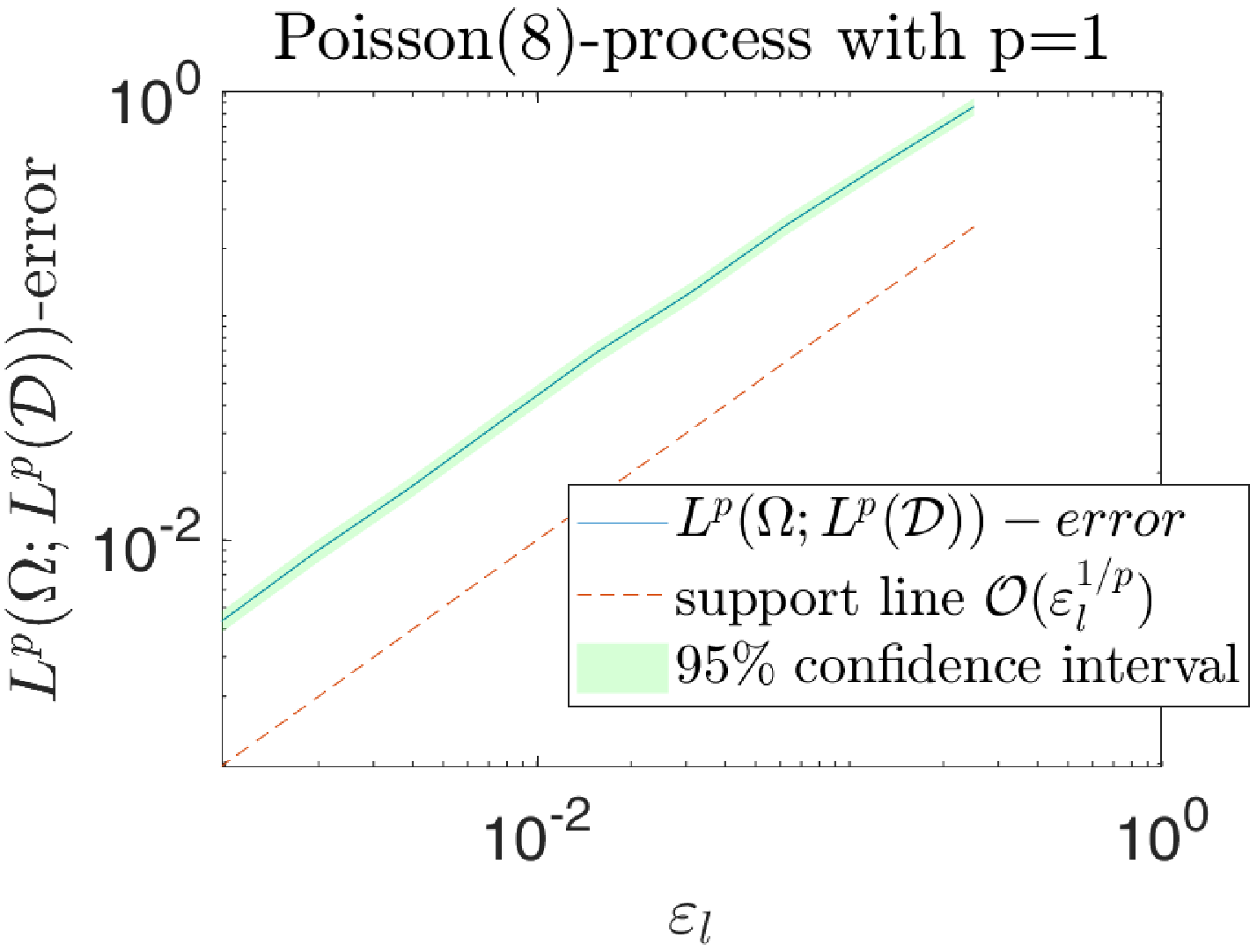}}
\subfigure{\includegraphics[scale=0.24]{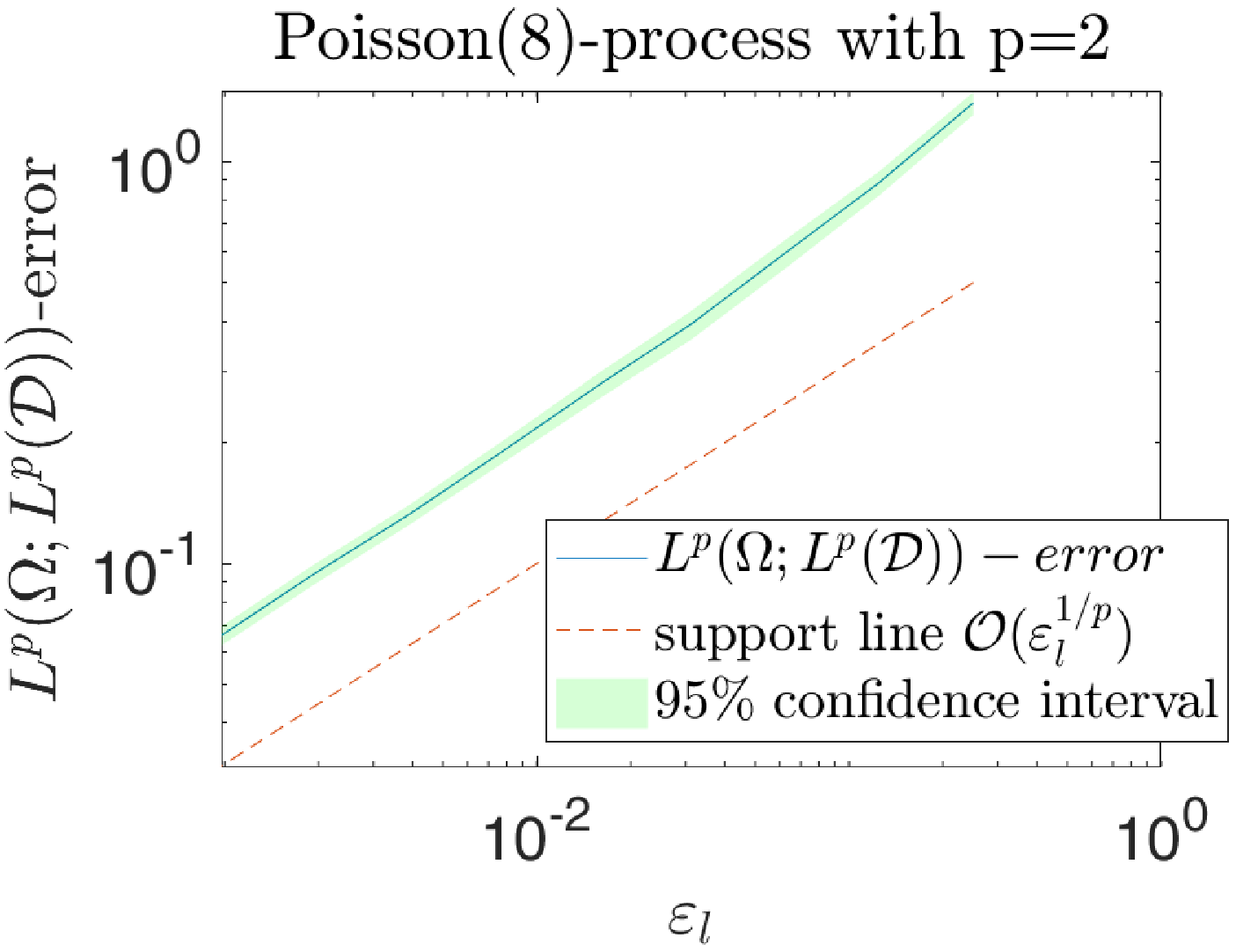}}
\subfigure{\includegraphics[scale=0.24]{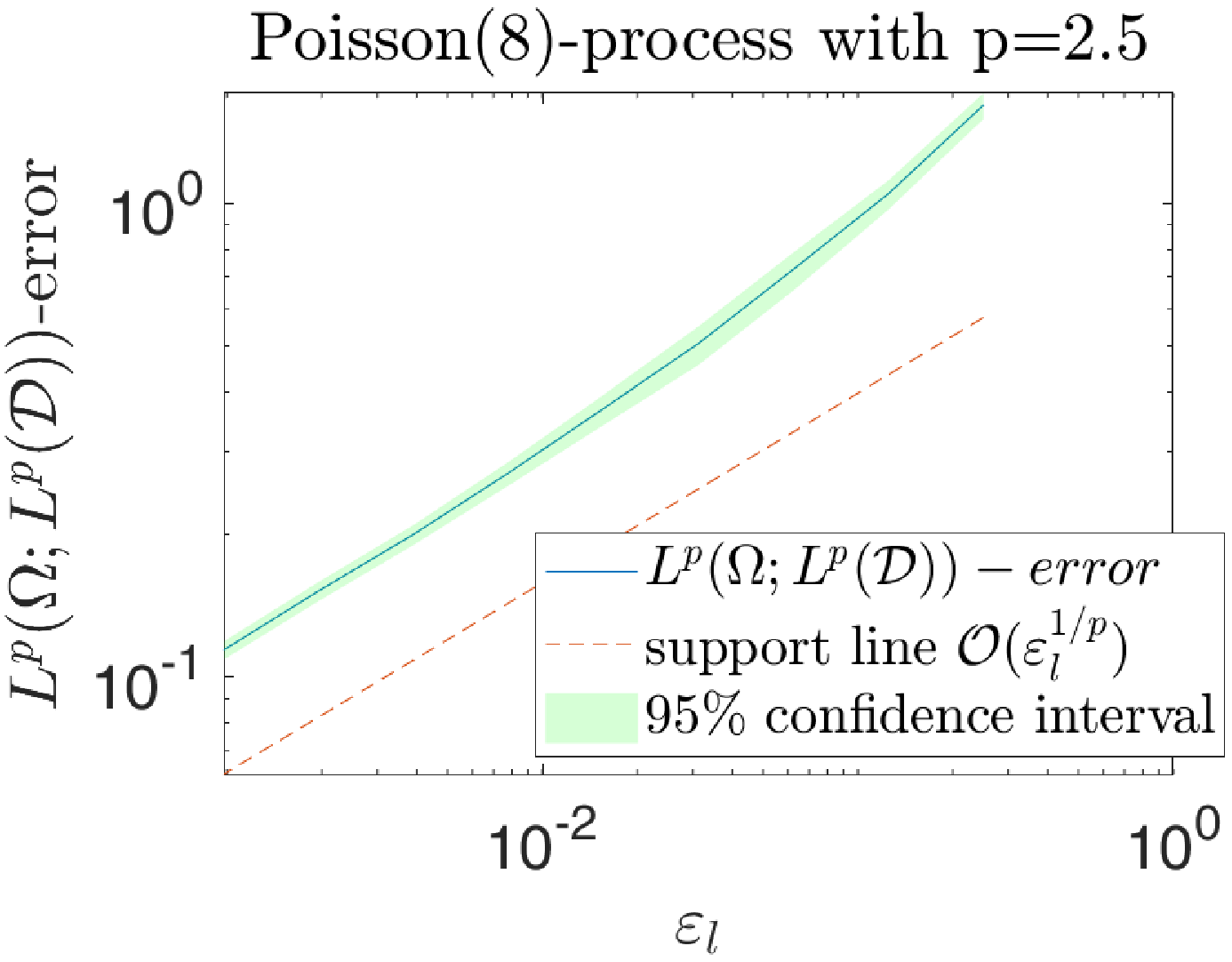}}
\subfigure{\includegraphics[scale=0.24]{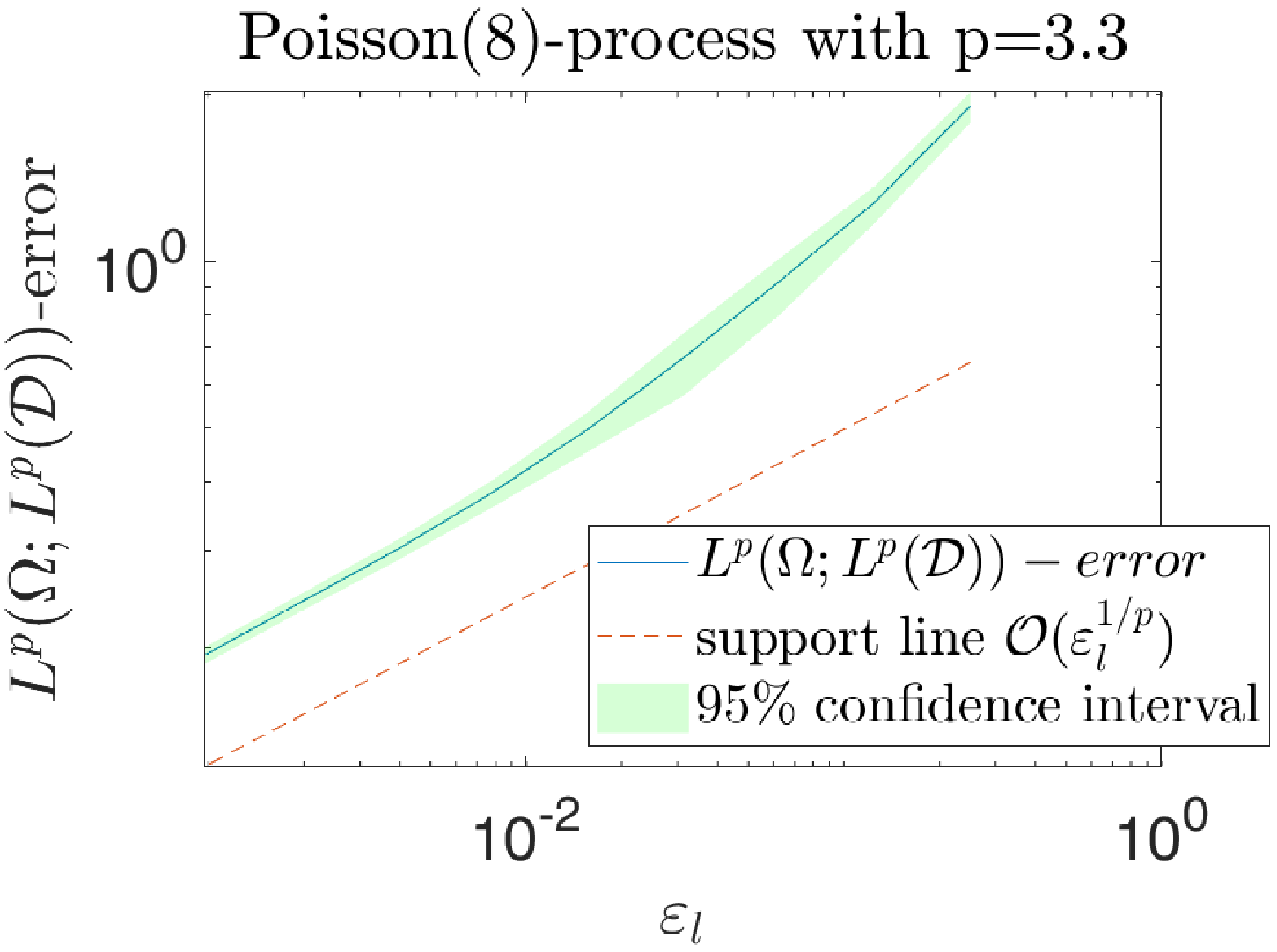}}\\
\subfigure{\includegraphics[scale=0.7]{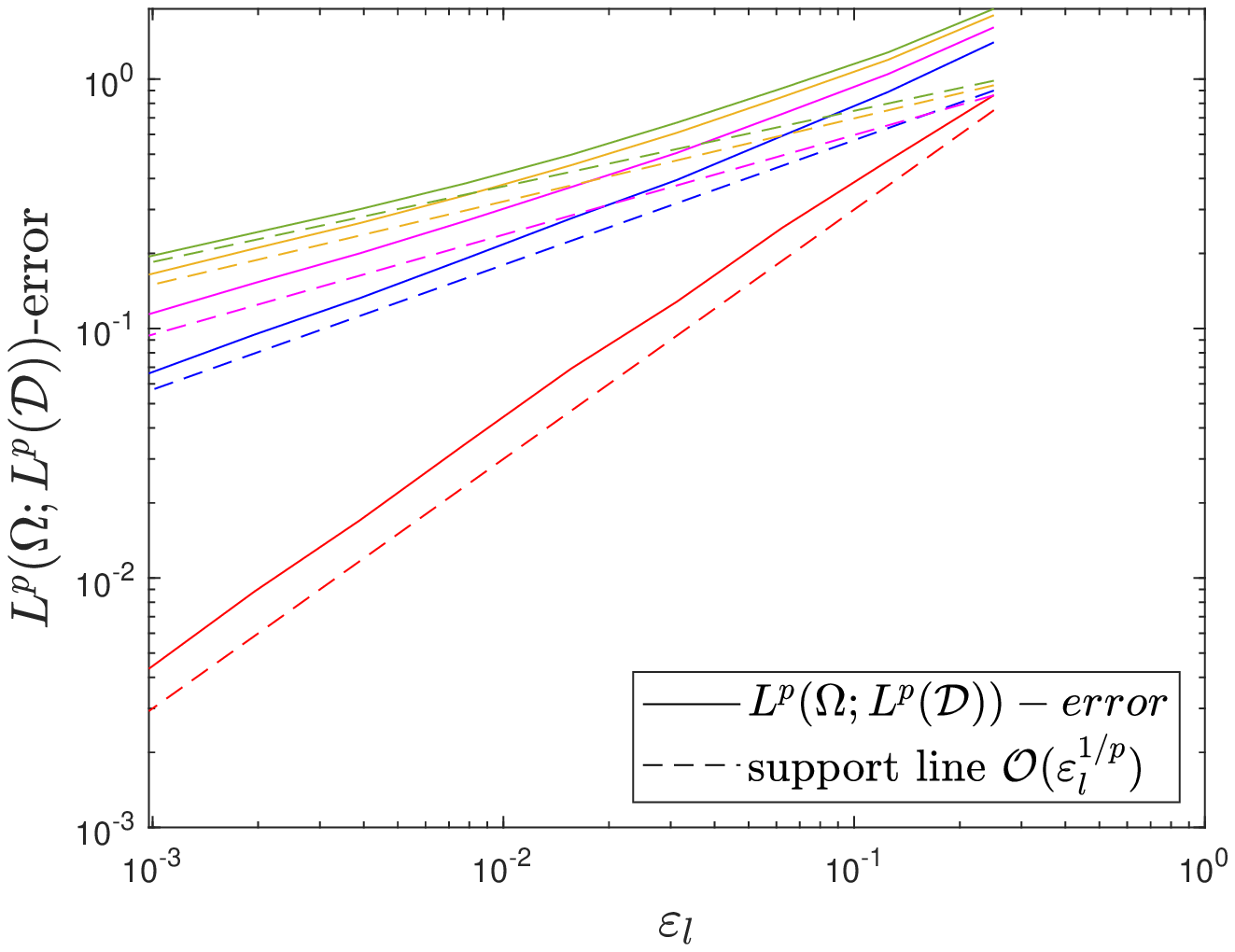}}
\caption{Estimated $L^p$ approximation errors against the discretization parameter $\varepsilon_l$ for different values of $p$ using a Poisson(8)-process (top) and overview for $p = \textcolor{r}{1},~\textcolor{b}{2},~\textcolor{m}{2.5},~\textcolor{dy}{3},~\textcolor{dg}{3.3}$ (bottom).}
\label{FIG:NumExApprGSLPPoiss}
\end{center}
\end{figure}
As expected, the approximated $L^p$ errors converge asymptotically with rate $\varepsilon_l^{1/p}$ for the considered moments $p$. The bottom plot of Figure~\ref{FIG:NumExApprGSLPPoiss} shows an overview of the $L^p$ approximation errors for different values of $p$ and  illustrates that the $L^p$-errors indeed converge with different rates.

In our second numerical example, we set $l$ to be a Gamma(2,4)-process. Further, we set $\nu=2$ in Equation~\eqref{EQ:NumExGSLPApprEigenbasis}. The approximations $l^{(\varepsilon_l)}\approx l$ of the Lévy process on the different levels are again computed by piecewise constant extensions of values of the process on an equidistant grid with stepsize $\varepsilon_l$. Aiming to verify Equation~\eqref{EQ:NumExGSLPApprErrorBound}, we use $150$ samples to estimate the $L^p$ approximation error, for $p \in\{1,2,2.5,3,3.3\}$. In order to compute a sufficiently accurate reference field in each sample, we use $500$ summands in the KLE approximation of the GRF $W$ and the Gamma process is computed on a reference grid with stepsize $\varepsilon_l=2^{-13}$. The convergence of the estimated $L^p$ error plotted against the discretization parameter $\varepsilon_l$ is visualized in Figure~\ref{FIG:NumExApprGSLPGam}, which shows the expected behaviour of the estimated $L^p$ error for the considered moments $p$. As in the previous experiment, we provide a plot with all estimated $L^p$ errors in one figure (see bottom plot in Figure \ref{FIG:NumExApprGSLPGam}), which again confirms that the $L^p$-error of the approximation converge in $\varepsilon_l$ with rate $1/p$ for the considered values of $p$.
\begin{figure}[ht]
\begin{center}
\subfigure{\includegraphics[scale=0.24]{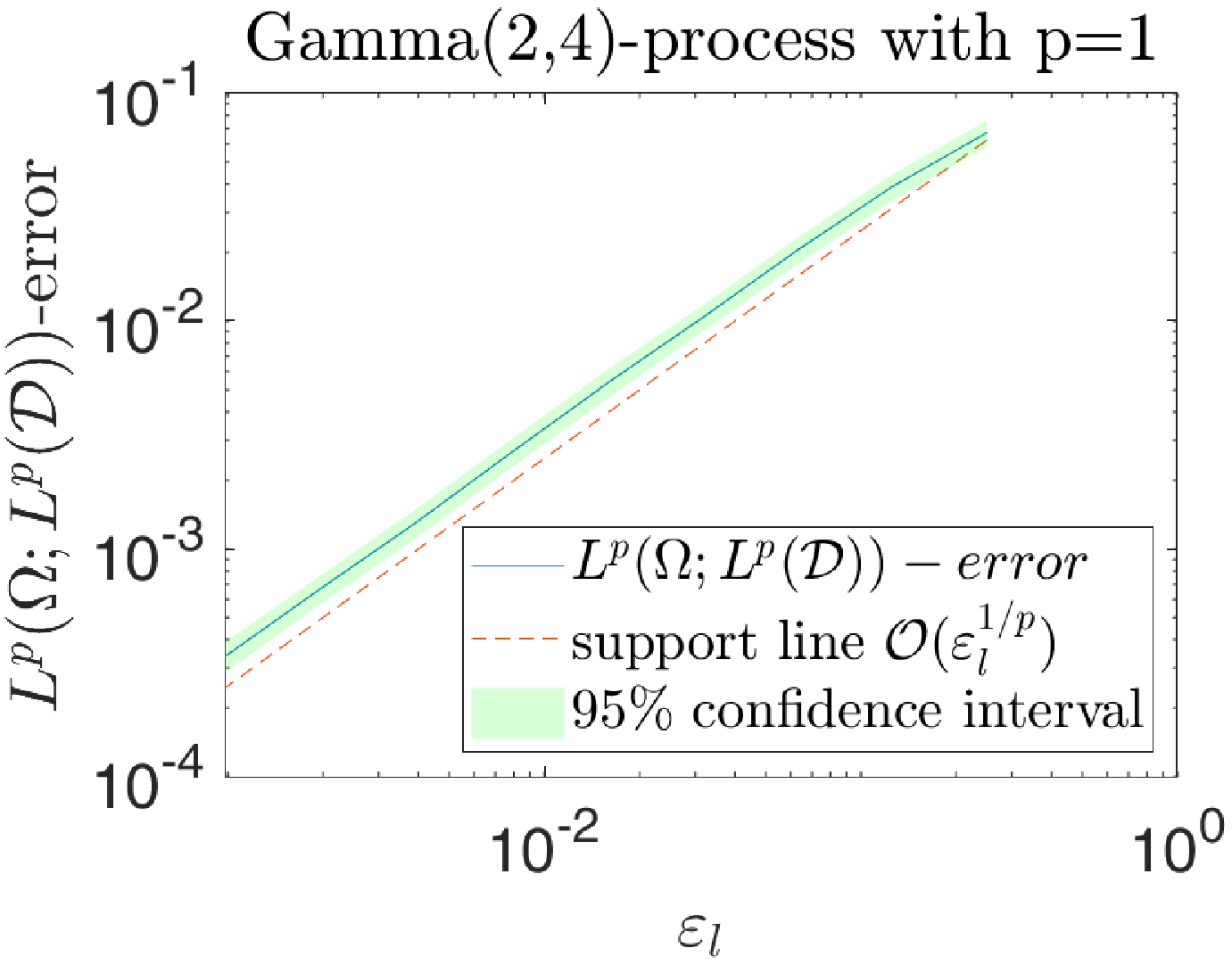}}
\subfigure{\includegraphics[scale=0.24]{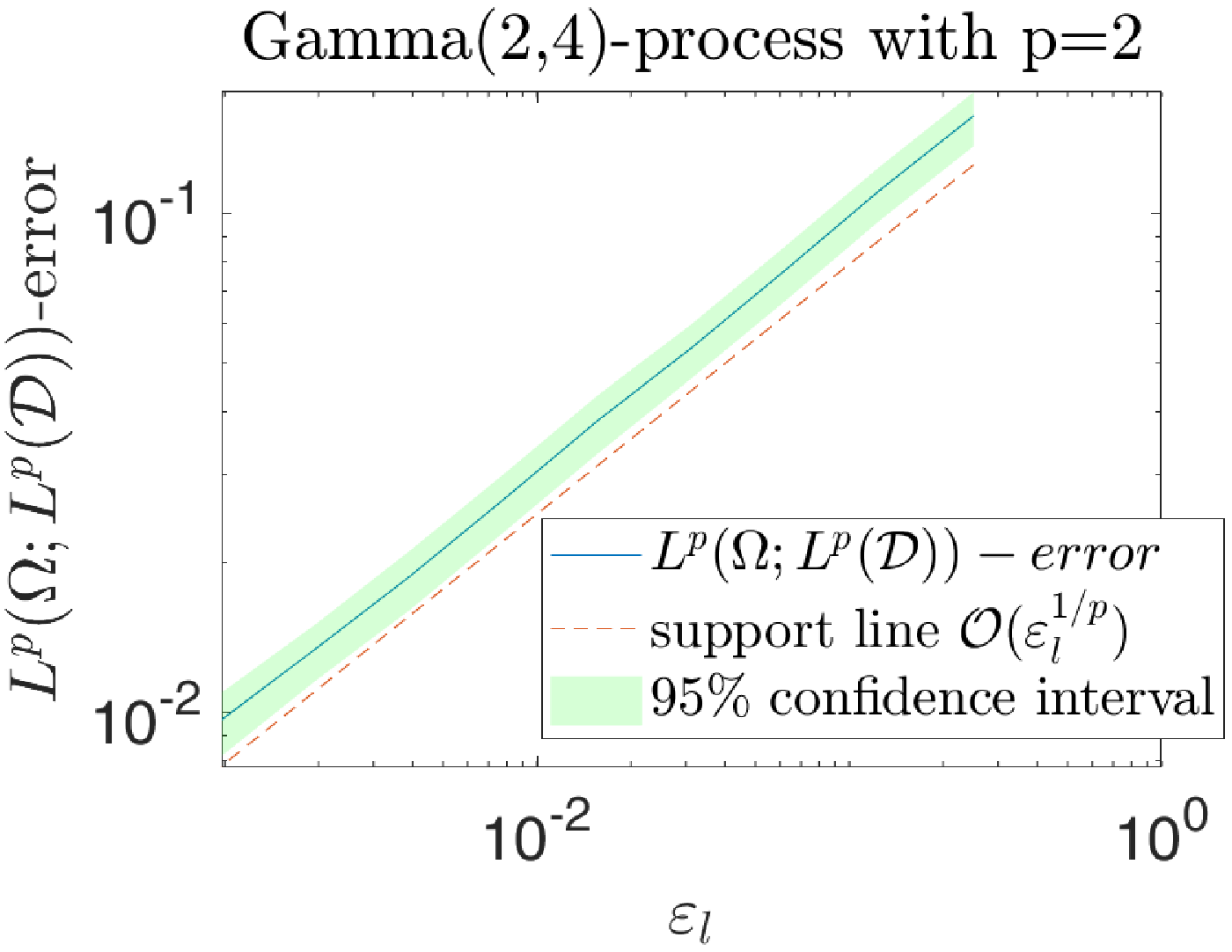}}
\subfigure{\includegraphics[scale=0.24]{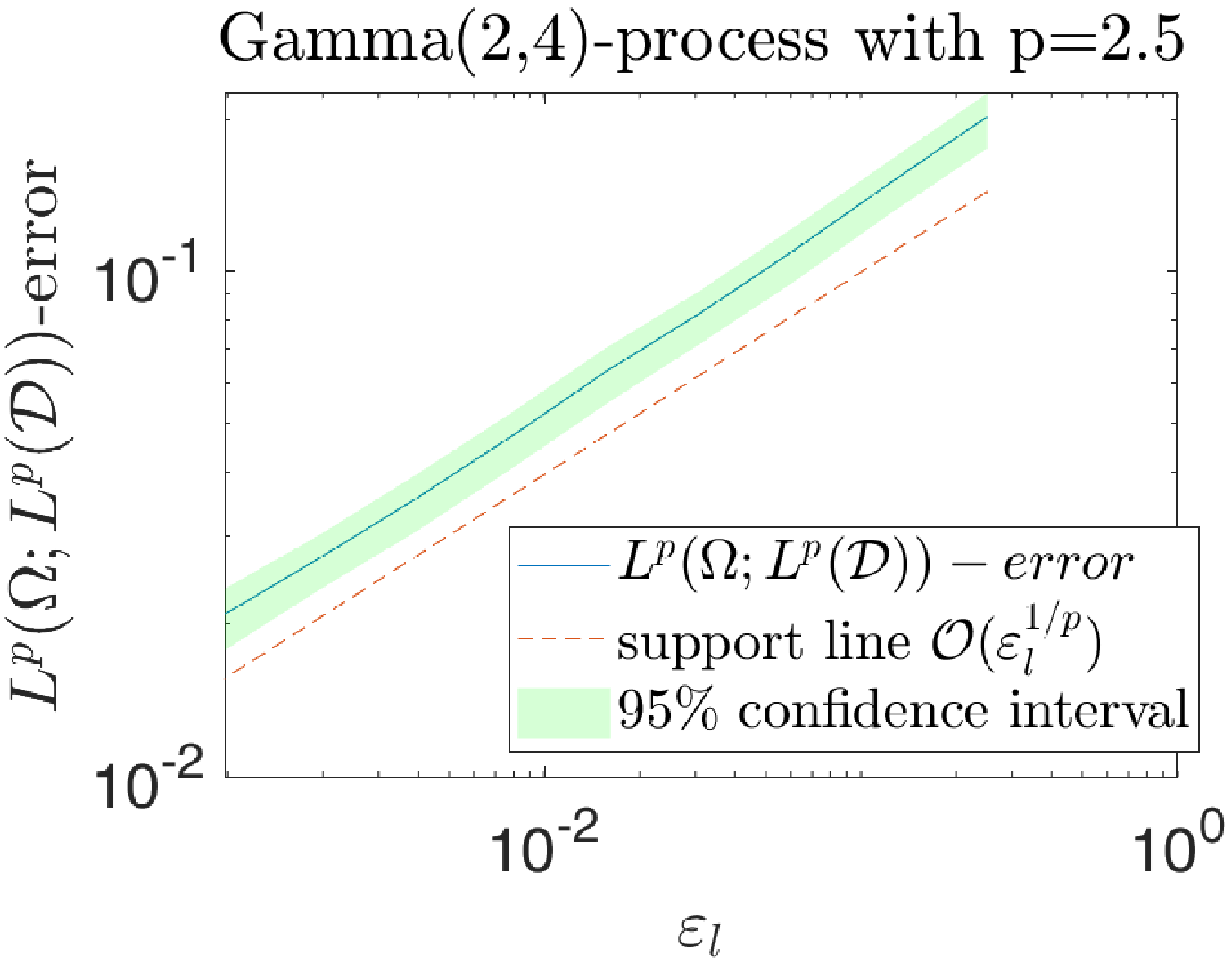}}
\subfigure{\includegraphics[scale=0.24]{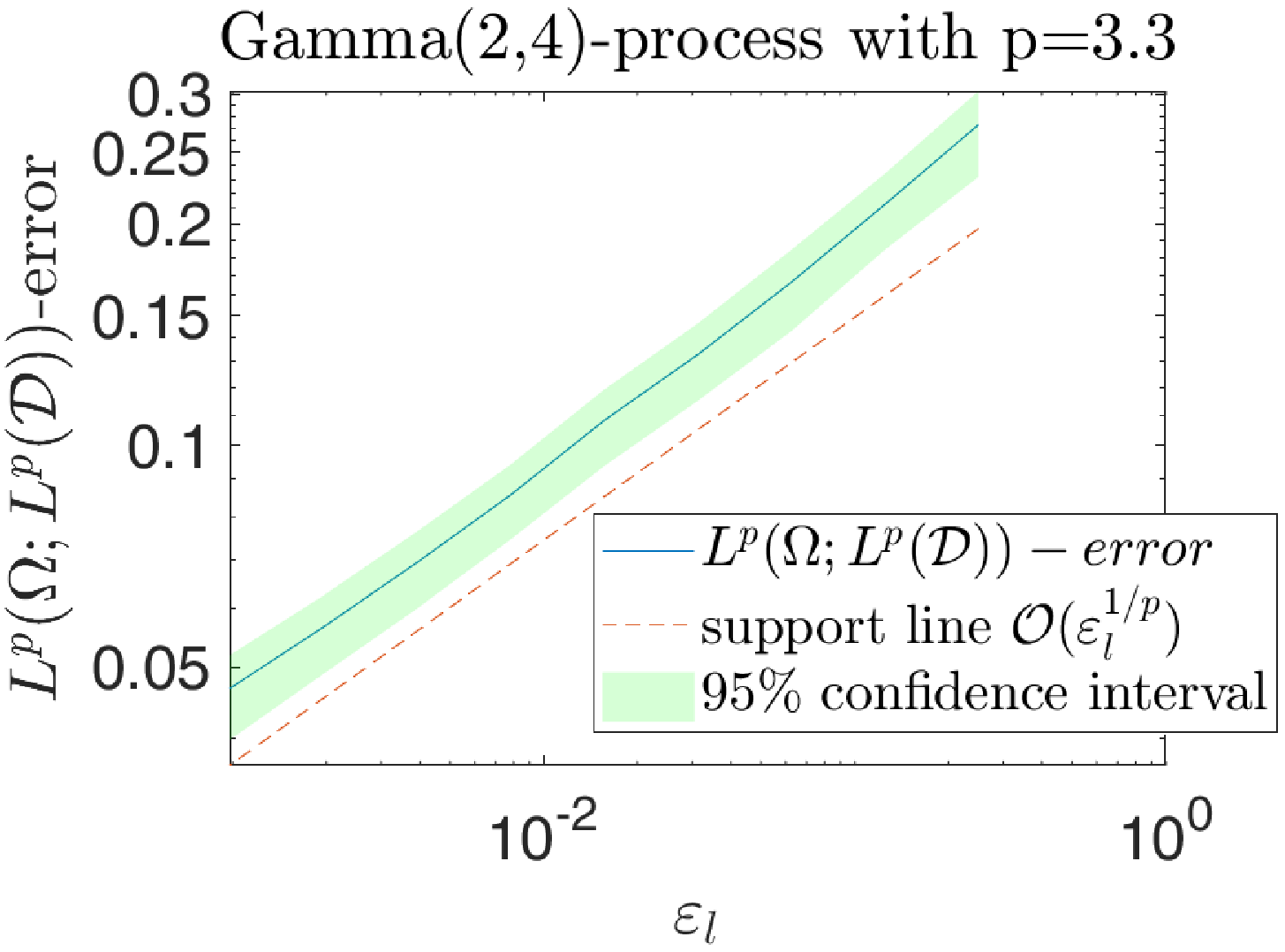}}\\
\subfigure{\includegraphics[scale=0.7]{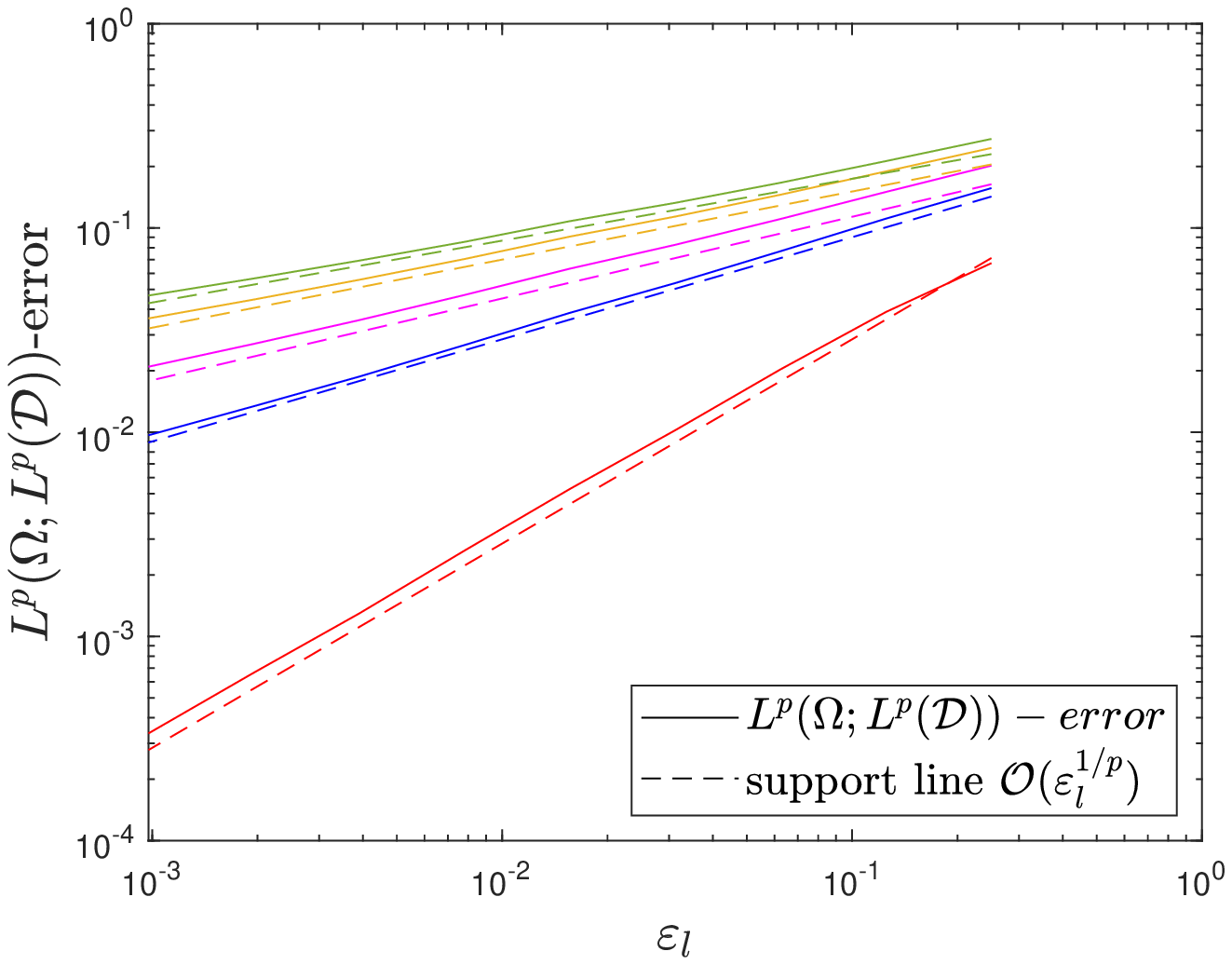}}
\caption{Estimated $L^p$ approximation errors against the discretization parameter $\varepsilon_l$  for different values of $p$ using a Gamma(2,4)-process (top) and overview for $p = \textcolor{r}{1},~\textcolor{b}{2},~\textcolor{m}{2.5},~\textcolor{dy}{3},~\textcolor{dg}{3.3}$ (bottom).}
\label{FIG:NumExApprGSLPGam}
\end{center}
\end{figure}
    
\subsection{Empirical estimation of the covariance of the GSLF}\label{subsec:NumExCov}
Lemma \ref{LE:CovGSLP} gives access to the covariance function of the GSLF. This is of interest in applications, since it is often required that a random field mimics a specific covariance structure which is determined by (real world) data. In this example, we choose specific GSLFs and spatial points to compute the corresponding covariance using Lemma \ref{LE:CovGSLP} and compare it with empirically estimated covariances using samples of the GSLF. We choose $\mathcal{D}=[0,2]^2$ and estimate the covariance with a single level Monte Carlo estimator using a growing number of samples of the field $L$ evaluated at the two considered points $\underline{x},~\underline{x}'\in\mathcal{D}$. If $M$ denotes the number of samples used in this estimation and $E_M(\underline{x},\underline{x}')$ denotes the Monte Carlo estimation of the covariance, the convergence rate of the corresponding RMSE for a growing number of samples is $-1/2$, i.e.
\begin{align*}
RMSE := \|q_L(\underline{x},\underline{x}')-E_M(\underline{x},\underline{x}')\|_{L^2(\Omega)} = \mathcal{O}(1/\sqrt{M}),~M\rightarrow \infty.
\end{align*} 
In our experiments, we use Poisson and Gamma processes and $W$ is set to be a centered GRF with squared exponential covariance function, i.e.
\begin{align*}
q_W(\underline{x},\underline{x}') = \sigma^2 \exp(-\|\underline{x}-\underline{x}'\|_2^2/r^2),~\underline{x},~\underline{x}'\in\mathcal{D},
\end{align*}
with pointwise variance $\sigma^2>0$, correlation length $r>0$ and $F=|\cdot|$. In our first experiment we choose $l$ to be a Gamma process with varying parameters. The RMSE is estimated for a growing number of samples $M$ and 100 independent MC runs. The results are shown in Figure \ref{FIG:CovEstGam}. As expected, the approximated RMSE convergences with order $\mathcal{O}(1/\sqrt{M})$ for both experiments.
\begin{figure}[ht]
\begin{center}
\subfigure{\includegraphics[scale=0.45]{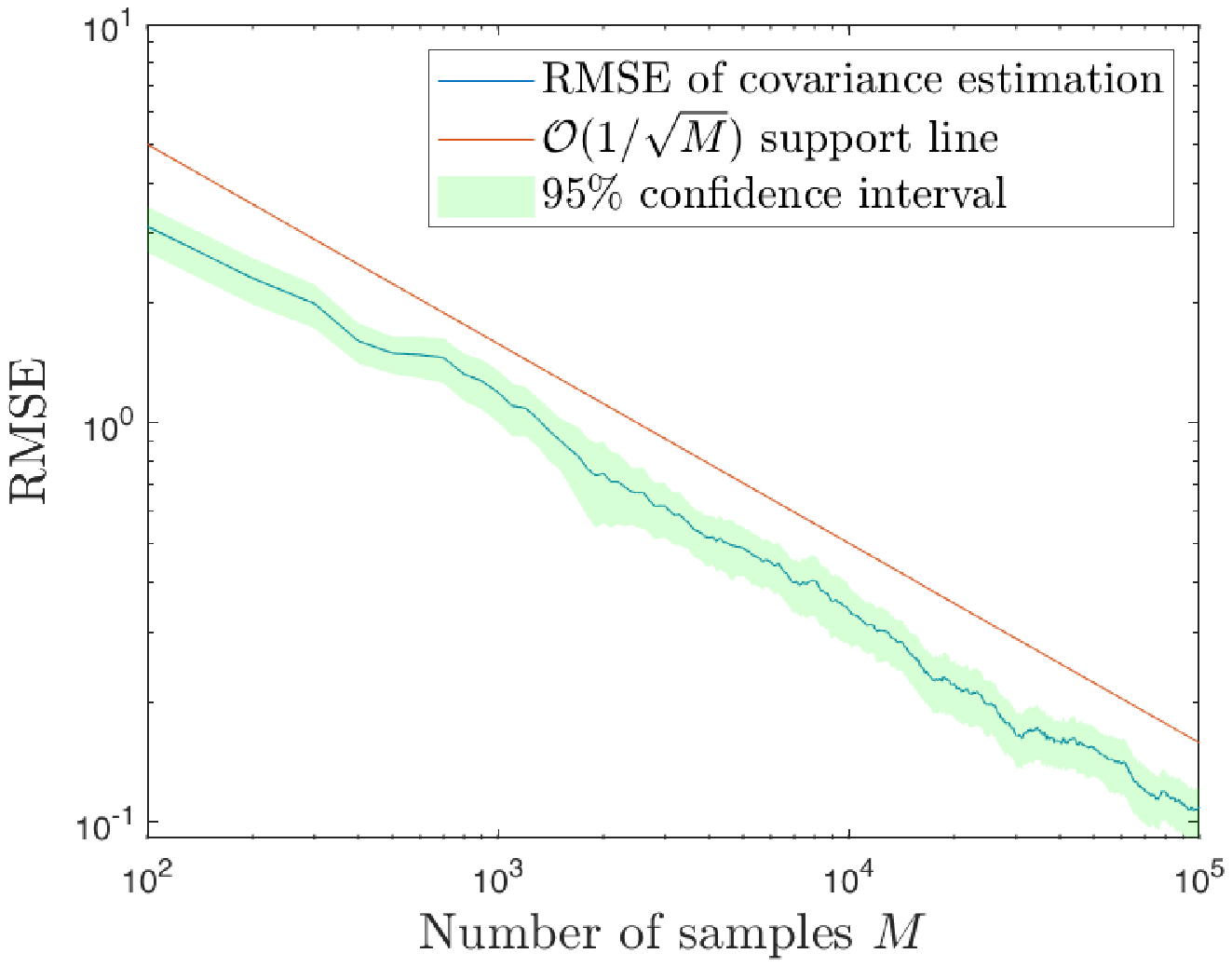}}
\subfigure{\includegraphics[scale=0.45]{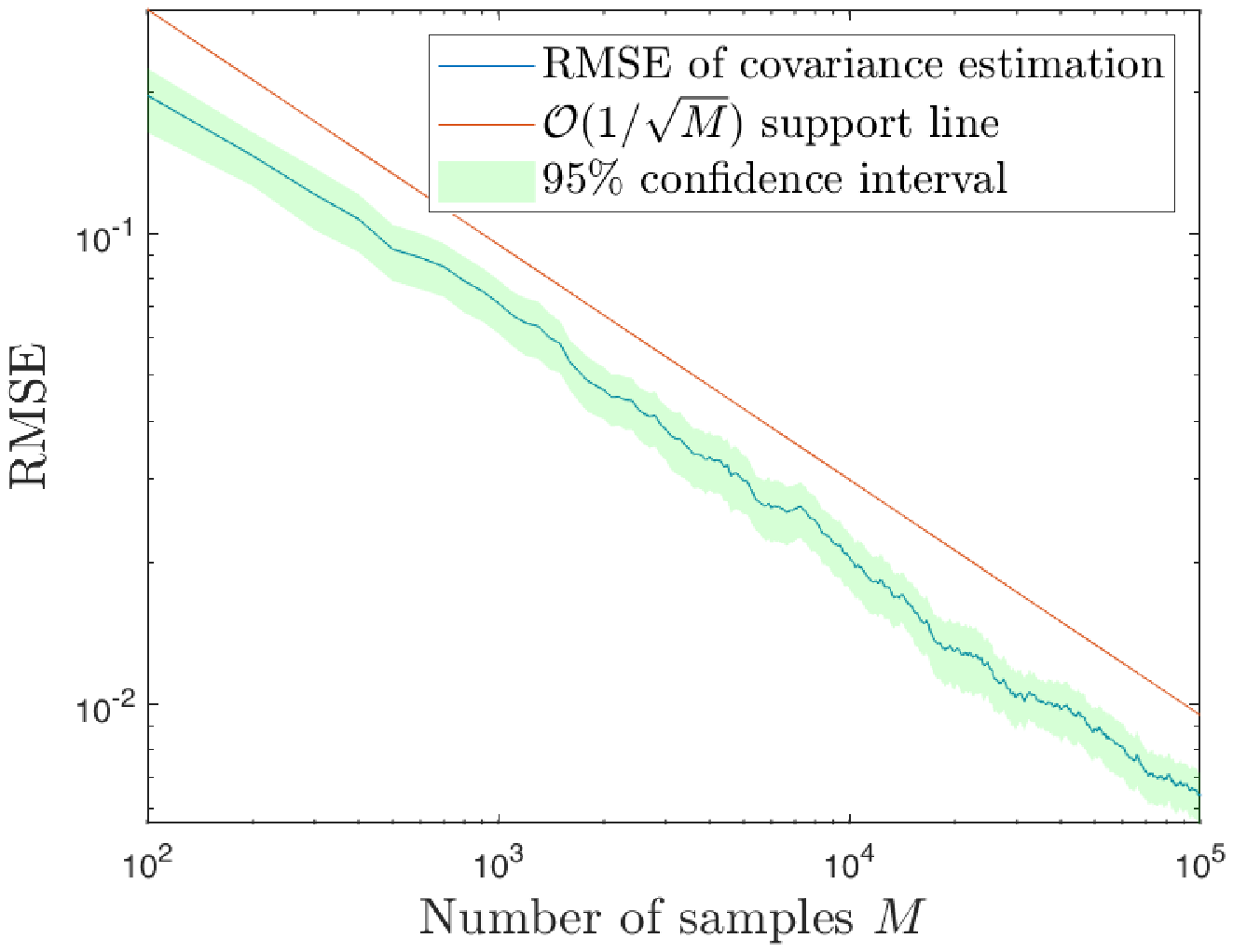}}
\caption{Convergence of RMSE of empirical covariance $q_L(\underline{x},\underline{x}')$; left: $l$ is a Gamma(4,1.5)-process, $r=1$, $\sigma^2 = 4$, $\underline{x} = (0.2,1.5)$, $\underline{x}' = (0.9,0.8)$, $q_L(\underline{x},\underline{x}') \approx 3.0265$; right: $l$ is a Gamma(5,6)-process, $r=1.2$, $\sigma^2 = 1.5^2$, $\underline{x} = (0.9,1.2)$, $\underline{x}' = (1.6,0.5)$, $q_L(\underline{x},\underline{x}') \approx 0.236$.}
\label{FIG:CovEstGam}
\end{center}
\end{figure}
In our second example we set $l$ to be a Poisson process. The results are shown in Figure \ref{FIG:CovEstPoiss}.
\begin{figure}[ht]
\begin{center}
\subfigure{\includegraphics[scale=0.45]{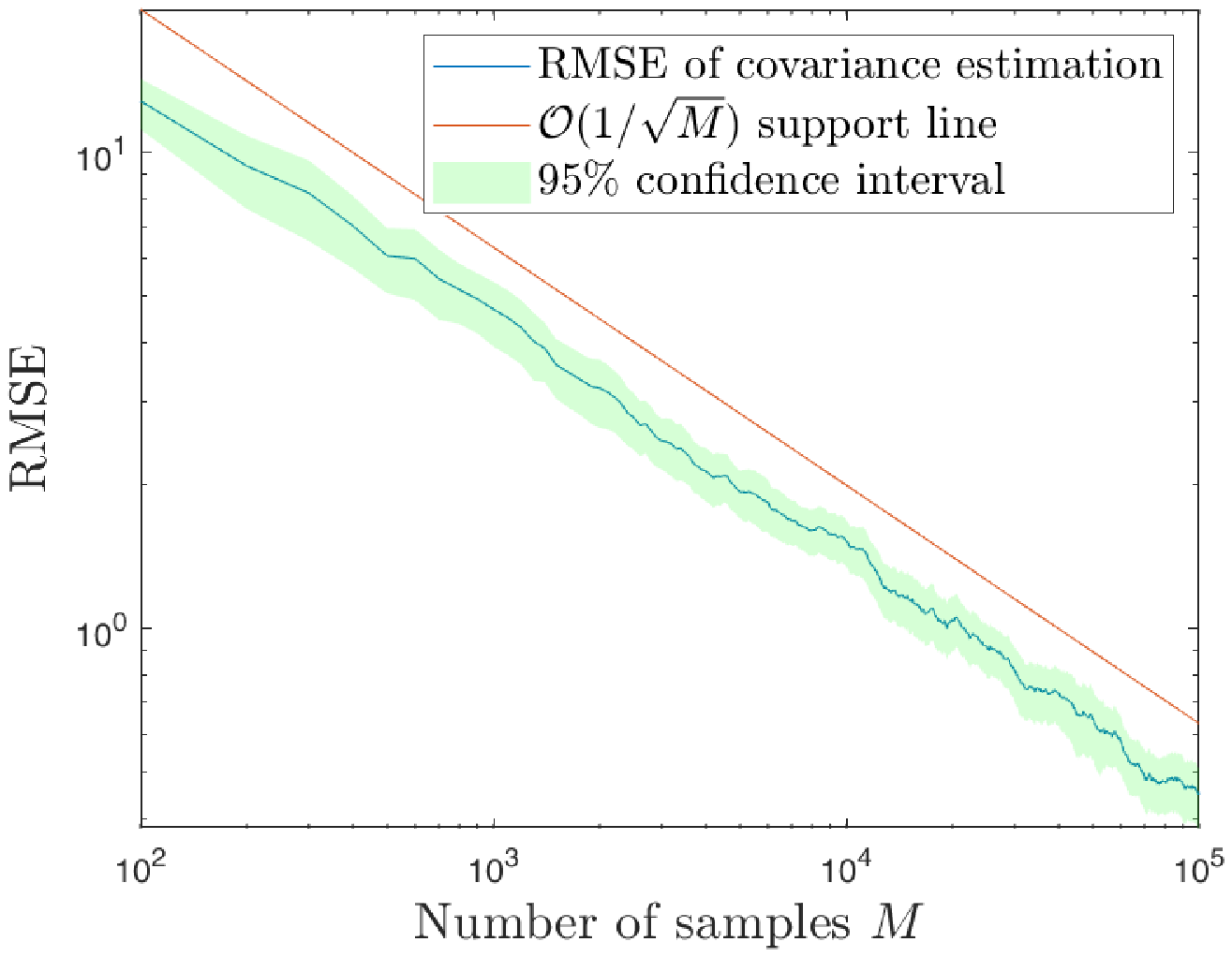}}
\subfigure{\includegraphics[scale=0.45]{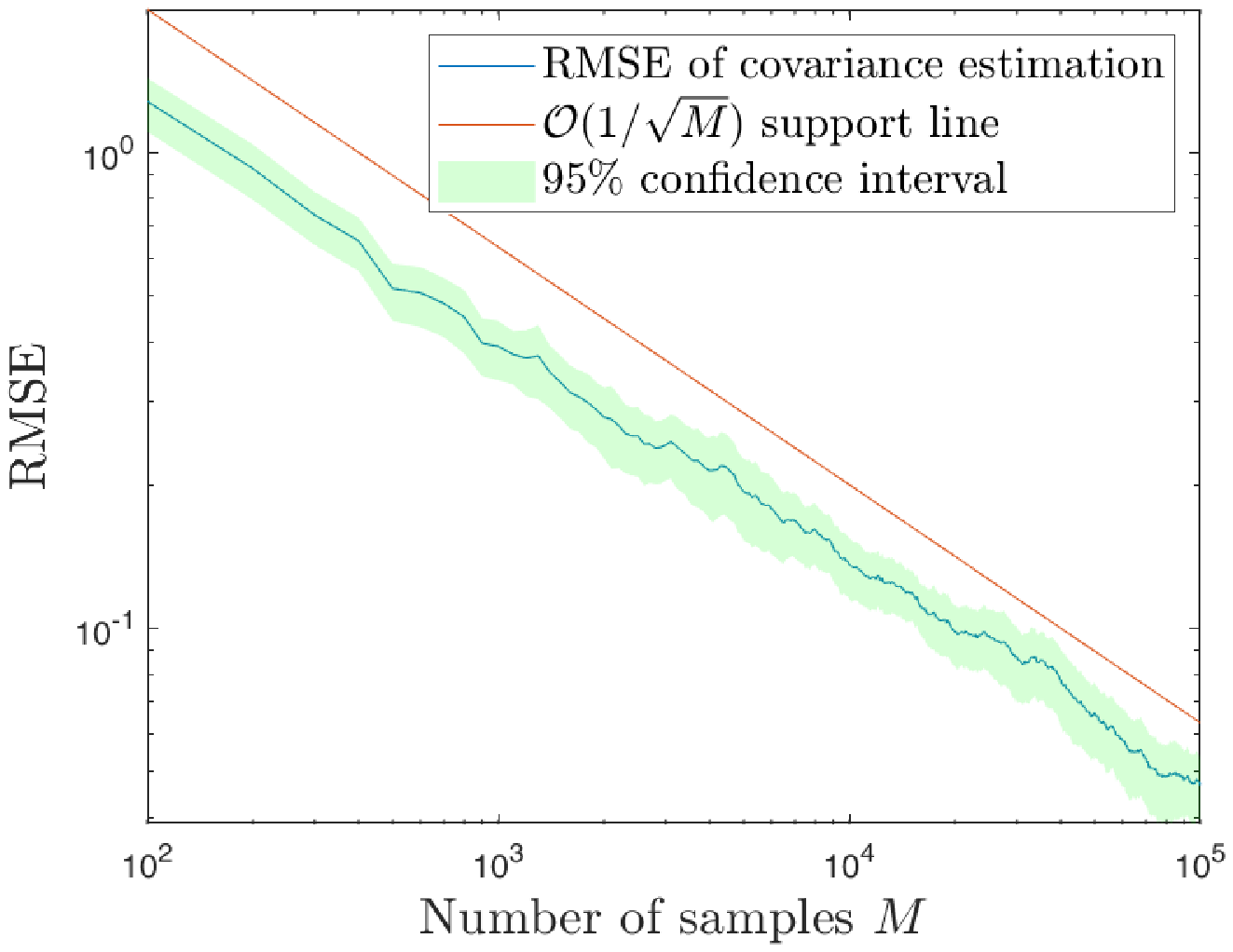}}
\caption{Convergence of RMSE of empirical covariance $q_L(\underline{x},\underline{x}')$; left: $l$ is a Poisson(8)-process, $r=0.5$, $\sigma^2 = 1.5^2$, $\underline{x}=(0.5,0.8)$, $\underline{x}' = (0.6,1.5)$, $q_L(\underline{x},\underline{x}') \approx 6.4807$; right: $l$ is a Poisson(3)-process, $r=1$, $\sigma^2 = 1$, $\underline{x}=(1.2,0.6)$, $\underline{x}' = (0.9,1.7)$, $q_L(\underline{x},\underline{x}') \approx 1.6485$.}
\label{FIG:CovEstPoiss}
\end{center}
\end{figure}
As in the previous example, we see a convergence rate of order $\mathcal{O}(1/\sqrt{M})$ of the MC estimator for the covariance in each experiment. For small sample numbers, the error values for the estimation of the covariances might seem quite high: for example, in the left plot of Figure \ref{FIG:CovEstPoiss}, using $M=100$ samples, we obtain an approximated RMSE which is approximately two times the exact value of the covariance. This seems to large at first glance. However, one has to keep in mind that the RMSE is bounded by $std(L(\underline{x})\cdot L(\underline{x}'))/\sqrt{M}$ and the standard deviation of the product of the evaluated field may become quite large. In fact, in the left hand side of Figure \ref{FIG:CovEstPoiss} we obtain $std(L(\underline{x})\cdot L(\underline{x}'))\approx 138,46$ which fits well to the observed results, being approximately $10 = \sqrt{100}$ times the approximated RMSE for $M=100$ samples. In practice, it is therefore important to keep in mind that the estimation of the covariance based on existing data might require a large number of observations.
    
\section{GSLFs in elliptic PDEs}\label{sec:GSLPellPDE}
In the previous sections, we considered theoretical properties of the GSLF and presented numerical examples to validate and visualize these results. In the last section of this paper we present an application of the GSLF in the context of PDEs. This might be interesting, for example, to model subsurface flow in uncertain heterogeneous or fractured media. In such a modeling situation the media is often modeled by a random field, which should therefore be distributionally flexible and allow for spatial discontinuities, both of which are fulfilled in the case of the GSLF. This motivates the investigation of a general elliptic PDE where the GSLF occurs in the diffusion coefficient. We introduce the considered elliptic PDE in Subsection \ref{subsec:ProblemFormEllPDE} following \cite{SGRFPDEMLMC} and \cite{SGRFPDE}. Spatial discretization methods are discussed in Subsection \ref{subsec:FEMethod} and we conclude this section with numerical experiments in Subsection \ref{subsec:NumExEllPDE}.

\subsection{Problem formulation and existence of solutions}\label{subsec:ProblemFormEllPDE}
Let $\mathcal{D}\subset \mathbb{R}^2$ be a bounded, connected Lipschitz domain\footnote{Note that the extension to dimensions $d>2$ is straightforward.}. Define $H:=L^2(\mathcal{D})$ and consider the elliptic PDE
	\begin{align}\label{EQ:EllProblem}
	-\nabla\cdot(a(\omega,\underline{x})\nabla u(\omega,\underline{x}))=f(\omega,\underline{x}) \text{ in }\Omega\times\mathcal{D},
	\end{align}
	where we impose the boundary conditions
	\begin{align}
	u(\omega,\underline{x})&=0 \text{ on } \Omega\times \Gamma_1,\label{EQ:EllProblemBCD}\\
	a(\omega,\underline{x}) \overrightarrow{n}\cdot\nabla u(\omega,\underline{x})&=g(\omega,\underline{x}) \text{ on } \Omega\times \Gamma_2.\label{EQ:EllProblemBCN}
	\end{align}
	Here, we split the boundary $\partial\mathcal{D}=\Gamma_1\overset{.}{\cup}\Gamma_2$ in two one-dimensional manifolds $\Gamma_1,~\Gamma_2$ and assume that $\Gamma_1 $ is of positive measure and that the exterior normal derivative $\overrightarrow{n}\cdot\nabla v$ on $\Gamma_2$ is well-defined for every $v\in C^1(\overline{\mathcal{D}})$, where $\overrightarrow{n}$ is the outward unit normal vector to $\Gamma_2$. The mappings $f:\Omega\times\mathcal{D}\rightarrow\mathbb{R}$ and $g:\Omega\times\Gamma_2\rightarrow\mathbb{R}$ a measurable functions and $a:\Omega\times\mathcal{D}\rightarrow\mathbb{R}$ is defined by
\begin{align}\label{EQ:DiffCoeffDefi}
a:\Omega\times\mathcal{D}\rightarrow (0,+\infty),~(\omega,x,y)\mapsto \overline{a}(x,y) + \Phi_1(W_1(x,y)) + \Phi_2(l(F(W_2(x,y)))),
\end{align}
where 
\begin{itemize}
\item $\overline{a}:\mathcal{D}\rightarrow (0,+\infty)$ is deterministic, continuous and there exist finite constants $\overline{a}_+,\overline{a}_->0$ with $\overline{a}_-\leq \overline{a}(x,y)\leq \overline{a}_+$ for $(x,y)\in\mathcal{D}$.
\item $\Phi_1,~\Phi_2:\mathbb{R}\rightarrow [0,+\infty)$ are continuous.
\item $F:\mathbb{R}\rightarrow \mathbb{R}_+$ is Lipschitz continuous and globally bounded by $C_F>0$, i.e. $F(x)<C_F, ~x\in\mathbb{R}$.
\item $W_1$ and $W_2$ are zero-mean GRFs on $\overline{\mathcal{D}}$ with $\mathbb{P}-a.s.$ continuous paths.
\item $l$ is a L\'evy process on $[0,C_F]$.
\end{itemize}
Note that we consider the case of a homogeneous Dirichlet boundary condition on $\Gamma_1$ in the theoretical analysis to simplify notation. Non-homogeneous Dirichlet boundary conditions could also be considered, since such a problem can always be formulated as a version of \eqref{EQ:EllProblem} - \eqref{EQ:EllProblemBCN} with modified source term and Neumann data (see also \cite[Remark 2.1]{AStudyOfElliptic}).
    
Next, we shortly introduce the pathwise weak solution of problem \eqref{EQ:EllProblem} - \eqref{EQ:EllProblemBCN} following \cite{SGRFPDE} and \cite{SGRFPDEMLMC}.
We denote by $H^1(\mathcal{D})$ the Sobolev space and by $T$ the trace operator  $T:H^1(\mathcal{D})\rightarrow H^{\frac{1}{2}}(\partial \mathcal{D})$ 
where $Tv=v|_{\partial \mathcal{D}}$ for $v\in C^\infty (\overline{\mathcal{D}})$ (see \cite{TraceTheoremLipDomain}). Further, we introduce the solution space $V\subset H^1(\mathcal{D})$ by
\begin{align*}
V:=\{v\in H^1(\mathcal{D})~|~Tv|_{\Gamma_1}=0\},
\end{align*}
where we take over the standard Sobolev norm, i.e. $\|\cdot\|_V:=\|\cdot\|_{H^1(\mathcal{D})}$. 	We identify $H$ with its dual space $H'$ and work on the Gelfand triplet $V\subset H\simeq H'\subset V'$. Multiplying the left hand side of Equation \eqref{EQ:EllProblem} by a test function $v\in V$ and integrating by parts (see e.g. \cite[Section 6.3]{ValliACompactCourseOnLinPDEs}) we obtain the following pathwise weak formulation of the problem: For fixed $\omega\in\Omega$ and given mappings $f(\omega,\cdot)\in V'$ and $g(\omega,\cdot)\in H^{-\frac{1}{2}}(\Gamma_2)$, find $u(\omega,\cdot)\in V$ such that 
\begin{align}\label{EQ:WeakFormProblem}
B_{a(\omega)}(u(\omega,\cdot),v) = F_\omega(v)
\end{align}
for all $v\in V$. The function $u(\omega,\cdot)$ is then called pathwise weak solution to problem \eqref{EQ:EllProblem} - \eqref{EQ:EllProblemBCN} and the bilinear form $B_{a(\omega)}$ and the linear operator $F_\omega$ are defined by
\begin{align*}
B_{a(\omega)}:V\times V\rightarrow \mathbb{R}, ~(u,v)\mapsto \int_{\mathcal{D}}a(\omega,\underline{x})\nabla u(\underline{x})\cdot \nabla v(\underline{x})d\underline{x},
\end{align*}
and
\begin{align*}
F_\omega:V\rightarrow\mathbb{R}, ~v\mapsto \int_\mathcal{D}f(\omega,\underline{x})v(\underline{x})d\underline{x} + \int_{\Gamma_2}g(\omega,\underline{x})[Tv](\underline{x})d\underline{x},
\end{align*}	
where the integrals in $F_\omega$ are understood as the duality pairings:
\begin{align*}
\int_\mathcal{D}f(\omega,\underline{x})v(\underline{x})d\underline{x} = \prescript{}{V'}{\langle}f(\omega,\cdot),v\rangle_V \text{ and }
\int_{\Gamma_2}g(\omega,\underline{x})[Tv](\underline{x})d\underline{x} = \prescript{}{H^{-\frac{1}{2}}(\Gamma_2)}{\langle	}g(\omega,\cdot),Tv\rangle_{H^\frac{1}{2}(\Gamma_2)},
\end{align*}
for $v\in V$.

\begin{rem}\label{rem:CadlagFunctionsBounded}
Note that any real-valued, càdlàg function $f:[a,b]\rightarrow \mathbb{R}$ on a compact interval $[a,b]$, with $a<b$, is bounded. Otherwise, one could find a sequence $(x_n,~n\in\mathbb{N})\subset[a,b]$ such that $|f(x_n)|>n$ for all $n\in \mathbb{N}$. In this case, since $[a,b]$ is compact, there exists a subsequence $(x_{n_k},~k\in\mathbb{N})\subset[a,b]$ with a limit in $[a,b]$, i.e. $x_{n_k}\rightarrow x^*\in[a,b]$ for $k\rightarrow \infty$. Since $f$ is right-continuous in $x^*$ and $lim_{x\nearrow x^*}f(x)$ exists, $f$ is bounded in a neighborhood of $x^*$, i.e. there exists $\delta,C>0$ such that $|f(x)|\leq C$ for $x\in[a,b]$ with $|x-x^*|<\delta$, which contradicts the fact that $|f(x_{n_k})|>n_k$ since $|x_{n_k}-x^*|<\delta$ and $n_k>C$ both hold for $k$ large enough.
\end{rem}

The diffusion coefficient $a$ is jointly measurable by construction (see Remark \ref{rem:GSLPmeasurable}) and, for any fixed $\omega\in\Omega$, it holds $a(\omega,x,y)\geq \overline{a}_->0$ for all $(x,y)\in\mathcal{D}$ and 
\begin{align*}
a(\omega,x,y) \leq \overline{a}_+ + \underset{(x,y)\in\overline{\mathcal{D}}}{sup}\Phi_1(W_1(\omega,x,y)) + \underset{z\in[0,C_F]}{sup}\Phi_2(l(\omega,z)) < +\infty,
\end{align*}
for all $(x,y)\in\mathcal{D}$, where the finiteness follows by the continuity of $\Phi_1,~\Phi_2,~W_1$ and Remark \ref{rem:CadlagFunctionsBounded}.

It follows by a pathwise application of the Lax-Milgram lemma that the elliptic model problem ~\eqref{EQ:EllProblem} - \eqref{EQ:EllProblemBCN} with the diffusion coefficient $a$ has a unique, measurable pathwise weak solution. 
\begin{theorem}(see \cite[Theorem 3.7, Remark 2.4]{SGRFPDE} and \cite[Theorem 2.5]{AStudyOfElliptic})\label{TH:ExistenceOfSolutionSubord}
Let $f\in L^q(\Omega;H),$ $g\in L^q(\Omega;L^2(\Gamma_2))$ for some $q\in [1,+\infty)$. There exists a unique pathwise weak solution $u(\omega,\cdot)\in V$ to problem~\eqref{EQ:EllProblem} - \eqref{EQ:EllProblemBCN} with diffusion coefficient \eqref{EQ:DiffCoeffDefi} for $\mathbb{P}$-almost every $\omega\in\Omega$. Furthermore, $u\in L^r(\Omega;V)$ for all $r\in[1,q)$ and 
\begin{align*}
\|u\|_{L^r(\Omega;V)}\leq C(\overline{a}_-,\mathcal{D})(\|f\|_{L^q(\Omega;H)} + \|g\|_{L^q(\Omega;L^2(\Gamma_2))}),
\end{align*}
where $C(\overline{a}_-,\mathcal{D})>0$ is a constant depending only on the indicated parameters. 
\end{theorem}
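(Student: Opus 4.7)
The plan is to apply the Lax--Milgram lemma $\omega$-wise and then lift the resulting pathwise existence and estimate to the stochastic setting via a Galerkin approximation. First, I would fix $\omega$ in the full-measure set on which $W_1(\omega,\cdot)$ is continuous on $\overline{\mathcal{D}}$ and $l(\omega,\cdot)$ is càdlàg on $[0,C_F]$. The lower bound $a(\omega,\underline{x})\geq \overline{a}_->0$ combined with Poincaré's inequality on $V$ (available because $\Gamma_1$ has positive surface measure, so $V$-functions vanish on a subset of the boundary with positive measure) yields coercivity of $B_{a(\omega)}$ with constant depending only on $\overline{a}_-$ and $\mathcal{D}$. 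For continuity of $B_{a(\omega)}$ one needs $\|a(\omega,\cdot)\|_{L^\infty(\mathcal{D})}<\infty$; this follows exactly from the discussion preceding the theorem, combining continuity of $\overline{a},\Phi_1,W_1$ on the compact set $\overline{\mathcal{D}}$ with Remark~\ref{rem:CadlagFunctionsBounded}, which ensures that a càdlàg path is bounded on the compact interval $[0,C_F]$. Continuity of $F_\omega$ on $V$ follows from $H\hookrightarrow V'$ and the continuity of the trace $T:V\to L^2(\Gamma_2)$, giving
\begin{equation*}
|F_\omega(v)|\leq C(\mathcal{D})\bigl(\|f(\omega,\cdot)\|_H+\|g(\omega,\cdot)\|_{L^2(\Gamma_2)}\bigr)\|v\|_V.
\end{equation*}
Lax--Milgram then produces a unique $u(\omega,\cdot)\in V$ satisfying \eqref{EQ:WeakFormProblem}, together with the pathwise bound
\begin{equation*}
\|u(\omega,\cdot)\|_V\leq \frac{C(\mathcal{D})}{\overline{a}_-}\bigl(\|f(\omega,\cdot)\|_H+\|g(\omega,\cdot)\|_{L^2(\Gamma_2)}\bigr).
\end{equation*}

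Next, I would establish measurability of $\omega\mapsto u(\omega,\cdot)\in V$ by a Galerkin construction. Pick a countable family $(\phi_k)_{k\geq 1}\subset V$ whose finite linear spans are dense in $V$ (possible since $V$ is separable), set $V_N:=\mathrm{span}(\phi_1,\dots,\phi_N)$, and define $u_N(\omega):=\sum_{k=1}^N c^N_k(\omega)\phi_k$ via the finite-dimensional system
\begin{equation*}
\sum_{k=1}^N B_{a(\omega)}(\phi_k,\phi_j)\,c^N_k(\omega)=F_\omega(\phi_j),\qquad j=1,\dots,N.
\end{equation*}
The entries of this system are real-valued random variables, measurable in $\omega$ by the joint measurability of $a$ (Remark~\ref{rem:GSLPmeasurable}) and of $f,g$; the system matrix is symmetric positive definite pathwise by coercivity, so $c^N(\omega)$ is measurable by Cramer's rule and hence so is $u_N:\Omega\to V$. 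Céa's lemma, applied pathwise with the $\omega$-dependent continuity and coercivity constants, gives $u_N(\omega)\to u(\omega,\cdot)$ in $V$ for almost every $\omega$, whence $u$ is a strongly measurable $V$-valued map as the pointwise limit of measurable ones.

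Finally, raising the pathwise bound to the $r$-th power, integrating over $\Omega$, and using $\|\cdot\|_{L^r(\Omega)}\leq \|\cdot\|_{L^q(\Omega)}$ for $r<q$ (since $\mathbb{P}$ is a probability measure), together with the fact that the prefactor $C(\mathcal{D})/\overline{a}_-$ is deterministic, yields
\begin{equation*}
\|u\|_{L^r(\Omega;V)}\leq C(\overline{a}_-,\mathcal{D})\bigl(\|f\|_{L^q(\Omega;H)}+\|g\|_{L^q(\Omega;L^2(\Gamma_2))}\bigr).
\end{equation*}

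The step I expect to be the main obstacle is the $\mathbb{P}$-almost sure uniform boundedness of $a(\omega,\cdot)$ on $\overline{\mathcal{D}}$: the diffusion coefficient inherits the jumps of $l$ through the spatial ``clock'' $F\circ W_2$, and without a pathwise $L^\infty$ bound the bilinear form $B_{a(\omega)}$ would fail to be bounded and Lax--Milgram would not apply. The crucial use of Remark~\ref{rem:CadlagFunctionsBounded}, which relies on $F$ being globally bounded by $C_F$ so that $l$ need only be controlled on the compact interval $[0,C_F]$, keeps this step elementary; if $F$ were unbounded this route would break down and a different (e.g.\ weighted or Meyers-type) framework would be required.
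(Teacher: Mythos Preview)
Your proposal is correct and follows exactly the approach the paper indicates: a pathwise application of the Lax--Milgram lemma, with coercivity coming from the deterministic lower bound $\overline{a}_-$ and Poincar\'e's inequality, continuity from the $L^\infty$ bound on $a(\omega,\cdot)$ established via Remark~\ref{rem:CadlagFunctionsBounded}, and measurability handled by a Galerkin limit. The paper itself does not spell out these details but simply refers to \cite[Theorem 3.7, Remark 2.4]{SGRFPDE} and \cite[Theorem 2.5]{AStudyOfElliptic}, where the identical argument is carried out; your sketch is a faithful expansion of that reference.
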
	
 
\subsection{Spatial discretization of the elliptic PDE}\label{subsec:FEMethod} 
In the following, we briefly describe numerical methods to approximate the  pathwise solution to the random elliptic PDE, partially following \cite[Section 6]{SGRFPDE}. Our goal is to approximate the weak solution $u$ to problem~\eqref{EQ:EllProblem} - \eqref{EQ:EllProblemBCN} with diffusion coefficient $a$ given by Equation~\eqref{EQ:DiffCoeffDefi}. Therefore, for almost all $\omega\in\Omega$, we aim to approximate the function $u(\omega,\cdot)\in V$ such that
	\begin{align}\label{EQ:VariationalProblemApprSol}
	B_{a(\omega)} (u(\omega,\cdot), v) &:=\int_{\mathcal{D}}a(\omega,\underline{x})\nabla	u(\omega,\underline{x})\cdot\nabla v(\underline{x})d\underline{x} \notag \\ 
	&=\int_\mathcal{D}f(\omega,\underline{x})v(\underline{x})d\underline{x} + \int_{\Gamma_2} g(\omega,\underline{x})[Tv](\underline{x})d\underline{x}=:F_\omega(v),
	\end{align}
	for every $v\in V$. We compute an approximation of the solution using a standard Galerkin approach with linear basis functions. Therefore, assume a sequence of finite-element subspaces is given, which is denoted by $\mathcal{V}=(V_\ell,~\ell\in\mathbb{N}_0)$, where $V_\ell\subset	V$ are subspaces with growing dimension $\dim(V_\ell)=d_\ell$. We denote by $(h_\ell,~\ell\in\mathbb{N}_0)$ the corresponding refinement sizes with $h_\ell \rightarrow 0 $, for $\ell\rightarrow \infty$. Let $\ell\in\mathbb{N}_0$ be fixed and denote by $\{v_1^{(\ell)},\dots,v_{d_\ell}^{(\ell)}\}$ a basis of $V_\ell$. The (pathwise) discrete version of problem \eqref{EQ:VariationalProblemApprSol} reads: Find $u_\ell(\omega,\cdot)\in V_\ell$ such that \begin{align*}
	B_{a(\omega)}(u_\ell(\omega,\cdot),v_i^{(\ell)})=F_\omega(v_i^{(\ell)}) \text{ for all } i=1,\dots,d_\ell.
\end{align*}		
If we expand the solution $u_\ell(\omega,\cdot)$ with respect to the basis $\{v_1^{(\ell)},\dots,v_{d_\ell}^{(\ell)}\}$, we end up with the representation
\begin{align*}
u_\ell(\omega,\cdot)=\sum_{i=1}^{d_\ell} c_iv_i^{(\ell)},
\end{align*}
where the coefficient vector $\textbf{c}=(c_1,\dots,c_{d_\ell})^T\in\mathbb{R}^{d_\ell}$ is determined by the linear system of equations 
\begin{align*}
\textbf{B}(\omega)\textbf{c}=\textbf{F}(\omega),
\end{align*}
with stochastic stiffness matrix $\textbf{B}(\omega)_{i,j}=B_{a(\omega)}(v_i^{(\ell)},v_j^{(\ell)})$ and load vector $\mathbf{F}(\omega)_i=F_\omega (v_i^{(\ell)})$ for $i,j=1,\dots,d_\ell$.
\subsubsection{Standard linear finite elements}
	Let $(\mathcal{K}_\ell,~\ell\in \mathbb{N}_0)$ be a sequence of admissible triangulations of the domain $\mathcal{D}$ (cf. \cite[Definition 8.36]{EllipticDifferentialEquations}) and denote by $\theta_\ell>0$ the minimum interior angle of all triangles in $\mathcal{K}_\ell$, where we assume $\theta_\ell\geq \theta>0$ for a positive constant $\theta$. For $\ell \in \mathbb{N}_0$, we denote by $h_\ell:=\underset{K\in\mathcal{K}_\ell}{\max}\, diam(K)$ the maximum diameter of the triangulation $\mathcal{K}_\ell$ and define the finite dimensional subspaces by $V_\ell:=\{v\in V~|~v|_K\in\mathcal{P}_1,K\in \mathcal{K}_\ell\}$,	where $\mathcal{P}_1$ denotes the space of all polynomials up to degree one. If we assume that there exists a positive regularity parameter $\kappa_a>0$ such that for $\mathbb{P}-$almost all $\omega\in\Omega$ it holds $u(\omega,\cdot)\in H^{1+\kappa_a}(\mathcal{D})$ and $\|u\|_{L^2(\Omega;H^{1+\kappa_a}(\mathcal{D}))}\leq C_u$ for some constant $C_u>\infty$, we immediately obtain the following bound by C\'ea's lemma (see \cite[Section 4]{AStudyOfElliptic}, \cite[Section 6]{SGRFPDE}, \cite[Chapter 8]{EllipticDifferentialEquations})
	\begin{align}\label{EQ:ConvFEM}
	\|u-u_\ell\|_{L^2(\Omega;V)}\leq C\, h_\ell^{\min(\kappa_a,1)},
	\end{align}	
	for some constant $C$ which does not depend on $h_\ell$.
For general deterministic elliptic interface problems, one obtains a discretization error of order $\kappa_a < 1$ and, hence, one cannot expect the full order of convergence $\kappa_a=1$ in general for standard triangulations of the domain (see \cite{TheFiniteElementMethodForELlipticEquationsWithDiscontinuousCoefficients} and~\cite{AStudyOfElliptic}). Therefore, we present one possible approach to enhance the performance of the FE method for the considered elliptic PDE in Subsection \ref{subsubsec:AFEM}. We point out that it is not possible to derive optimal rates $\kappa_a>0$ such that \eqref{EQ:ConvFEM} holds for our general random diffusion coefficient (see also \cite{SGRFPDE}, \cite{AStudyOfElliptic}). However, we investigate the existence of such a constant numerically in Section \ref{subsec:NumExEllPDE}. We close this subsection with a remark on the practical simulation of the GRFs $W_1,~W_2$ and the Lévy process $l$ in the diffusion coefficient \eqref{EQ:DiffCoeffDefi}.

\begin{rem}
It is in general not possible to draw exact samples of the Lévy process and the GRFs in the diffusion coefficient \eqref{EQ:DiffCoeffDefi}. In practice, one has to use appropriate approximations $l^{(\varepsilon_l)}\approx l$ of the Lévy process and $W_1^{N}\approx W_1,~W_2^{N}\approx W_2$ of the GRFs with approximation parameters $\varepsilon_l >0$ and $N\in\mathbb{N}$, as introduced in Section \ref{sec:approximation}. In the context of FE approximations of the PDE \eqref{EQ:EllProblem} - \eqref{EQ:EllProblemBCN} with diffusion coefficient \eqref{EQ:DiffCoeffDefi}, the question arises, how to choose these approximation parameters in practice, given a specific choice of the FE approximation parameter $h_\ell$ in \eqref{EQ:ConvFEM}. Obviously, the choice of $\varepsilon_l$ and $N$ should depend on the FE parameter $h_\ell$, since a higher resolution of the FE approximation will be worthless if the approximation of the diffusion coefficient is poor. 

In \cite{SGRFPDE}, the authors considered the PDE \eqref{EQ:EllProblem} - \eqref{EQ:EllProblemBCN} with a different diffusion coefficient $a$. The coefficient considered in the mentioned paper also incorporates GRFs and Lévy processes, which in turn have to be approximated in practice, leading to an approximation $\tilde{a}\approx a$. The authors derived a rule on the choice of the approximation parameters, such that the error contributions from the approximation of the diffusion coefficient and the FE discretization are equilibrated (see \cite[Section 7.1]{SGRFPDE}). We want to emphasize that this result is essentially based on the quantification of the approximation error $\tilde{a}-a$ of the corresponding diffusion coefficient in an $L^p(\Omega,L^p(\mathcal{D}))$-norm, for some $p\geq 1$ (see \cite[Theorem 4.8 and Theorem 5.11]{SGRFPDE}). 

In Theorem \ref{TH:GSLPAPPR}, we derived an error bound on the approximation error of the GSLF in the $L^p(\Omega,L^p(\mathcal{D}))$-norm under Assumption \ref{ASS:CutProblemEigenvalues}. A corresponding error bound on the approximation of the diffusion coefficient defined in \eqref{EQ:DiffCoeffDefi} immediately follows under mild assumptions on $\Phi_1$ and $\Phi_2$. Therefore, following exactly the same arguments as in \cite{SGRFPDE} together with Theorem \ref{TH:GSLPAPPR}, we obtain the following rule for the practical choice of the approximation parameters $\varepsilon_l$ and $N$ such that the overall error contributions from the approximation of the GRFs, the Lévy process and the FE discretization are equilibrated and the error is dominated by the FE refinement parameter $h_\ell$: 
For $\ell\in\mathbb{N}$, choose $\varepsilon_l$ and $N$ such that
\begin{align*}
\varepsilon_l \simeq h_\ell^{2\kappa_a}\text{ and } R(N) \simeq h_\ell^{\frac{2\kappa_a}{\delta}}.
\end{align*}
For example, if we approximate the GRF by the KLE approach (see Subsection \ref{subsec:ApprGRF}), one should choose the cut-off index such that $N\simeq h_\ell^{-\frac{4\kappa_a}{\beta\delta}}$ with $\beta$ from Assumption \ref{ASS:CutProblemEigenvalues} \textit{i}.
\end{rem}

\subsubsection{Adaptive finite elements}\label{subsubsec:AFEM}
As we pointed out in the last section, one cannot expect full order convergence ($\kappa_a=1$) for the FE method with linear elements in the considered elliptic problem due to the discontinuities in the diffusion coefficient. One common way to improve the FE method is to use triangulations which are adapted to the jump discontinuities in the sense that the spatial jumps lie on edges of elements of the triangulation. This approach leads to sample-dependent triangulations which has been proven to enhance the performance of the FE method significantly compared to the use of standard triangulations, which are not adjusted to the jumps (see for example \cite{AStudyOfElliptic} and \cite{SGRFPDE} and the references therein). In the cited papers, the jump locations are known and the jump geometries allow for an (almost) exact alignment of the triangulation to the interfaces due to their specific geometrical properties. This is, however, not the case for the diffusion coefficient considered in the current paper, where the spatial jump positions are not known explicitly, nor is it possible to align the triangulation exactly in the described sense due to the complex jump geometries. 

Another possible approach to improve the FE method are adaptive Finite Elements (see e.g. \cite{MR1442375} and \cite{MR2115986} and the references therein). The idea is to identify triangles with a high contribution to the overall error of the FE approximation, which are then refined. For a given FE approximation $u_\ell \in V_\ell \subset V$ of the solution $u\in V$, the discretization error $u-u_\ell$ is estimated in terms of the approximated solution $u_\ell$ and no information about the true solution $u$ is needed. This may be achieved by the use of the Galerkin orthogonality and partial integration. For a given triangulation $\mathcal{T} = \{K,~K\subset\mathcal{D} \}$ with corresponding FE approximation $u_\ell$, the approximation error of the FE approximation to the considered elliptic problem \eqref{EQ:EllProblem} - \eqref{EQ:EllProblemBCN} may be estimated by
\begin{align*}
err_\mathcal{T}(u_\ell, a, f,g) = \left(\sum_{K\in\mathcal{T}} err_\mathcal{T}(u_\ell,a,f,g,K)\right)^{\frac{1}{2}}.
\end{align*}
Here, $err_\mathcal{T}(u_\ell,a,f,g,K)$ corresponds the error contribution of the approximation on the element $K\in\mathcal{T}$, which may be approximated by
\begin{align}\label{EQ:ErrContributionTriangle}
err_{\mathcal{T}}(u_\ell, a, f,g, K) &= h_K^2\|f\|_{L^2(K)}^2 +  \sum_{e\in \partial K \cap \Gamma_2} h_e \|g-\overrightarrow{n}_K\cdot a\,\nabla u_\ell{\big|_K}\|_{L^2(e)}^2 \notag\\
&+\sum_{e\in\partial K\setminus(\Gamma_2\cup \Gamma_1)} h_e\|\overrightarrow{n}_K\cdot a\,\nabla u_\ell{\big|_K}+ \overrightarrow{n}_{J(K,e)}\cdot a\,\nabla u_\ell{\big|_{J(K,e)}}\|_{L^2(e)}^2  ,
\end{align}
where we denote by $e\in\partial K$ an edge of the triangle $K\in\mathcal{T}$ and ${J(K,e)}\in\mathcal{T}$ is the unique triangle which is the direct neighbor of the triangle $K$ along the edge $e$. Further, $h_K$ is the diameter of the triangle $K$ and $h_e$ is the length of the edge $e$ (see \cite{MR2115986} and \cite{MR1771781}). The triangle-based estimated error obtained by Equation~\eqref{EQ:ErrContributionTriangle} allows for an identification of the triangles with the largest contribution to the overall error. This may then be used to perform a local mesh refinement, which usually consists in the refinement of triangles with high error contribution. One common strategy is to start with an initial triangulation, compute the error contribution of each triangle according to Equation~\eqref{EQ:ErrContributionTriangle} and refine all triangles which have an approximated error contribution which is at least $50\%$ of the error contribution with the largest approximated error (see, e.g.,  \cite[Section 5]{MR1284252}).
\begin{example}
Consider the PDE problem \eqref{EQ:EllProblem} - \eqref{EQ:EllProblemBCN} with homogeneous Dirichlet boundary conditions on $\mathcal{D}=(0,1)^2$ and $f\equiv 10$. The diffusion coefficient $a$ as defined in Equation~\eqref{EQ:DiffCoeffDefi} is given by 
\begin{align*}
a(x,y) = 0.1 + l(min\{|W_1(x,y)|,20\}),
\end{align*}
where $l$ is a Poisson(2)-process and $W_1$ is a centered GRF with squared exponential covariance function. In order to illustrate the adaptive mesh generation decribed above, we consider 3 samples of the diffusion coefficient and compute the adaptive meshes, where we use an initial mesh with FE refinement parameter $h=0.025$ and refine all triangles which have an estimated error contribution exceeding $50\%$ of the largest estimated error. 
\begin{figure}[ht]
	\centering
	\subfigure{\includegraphics[scale=0.45]{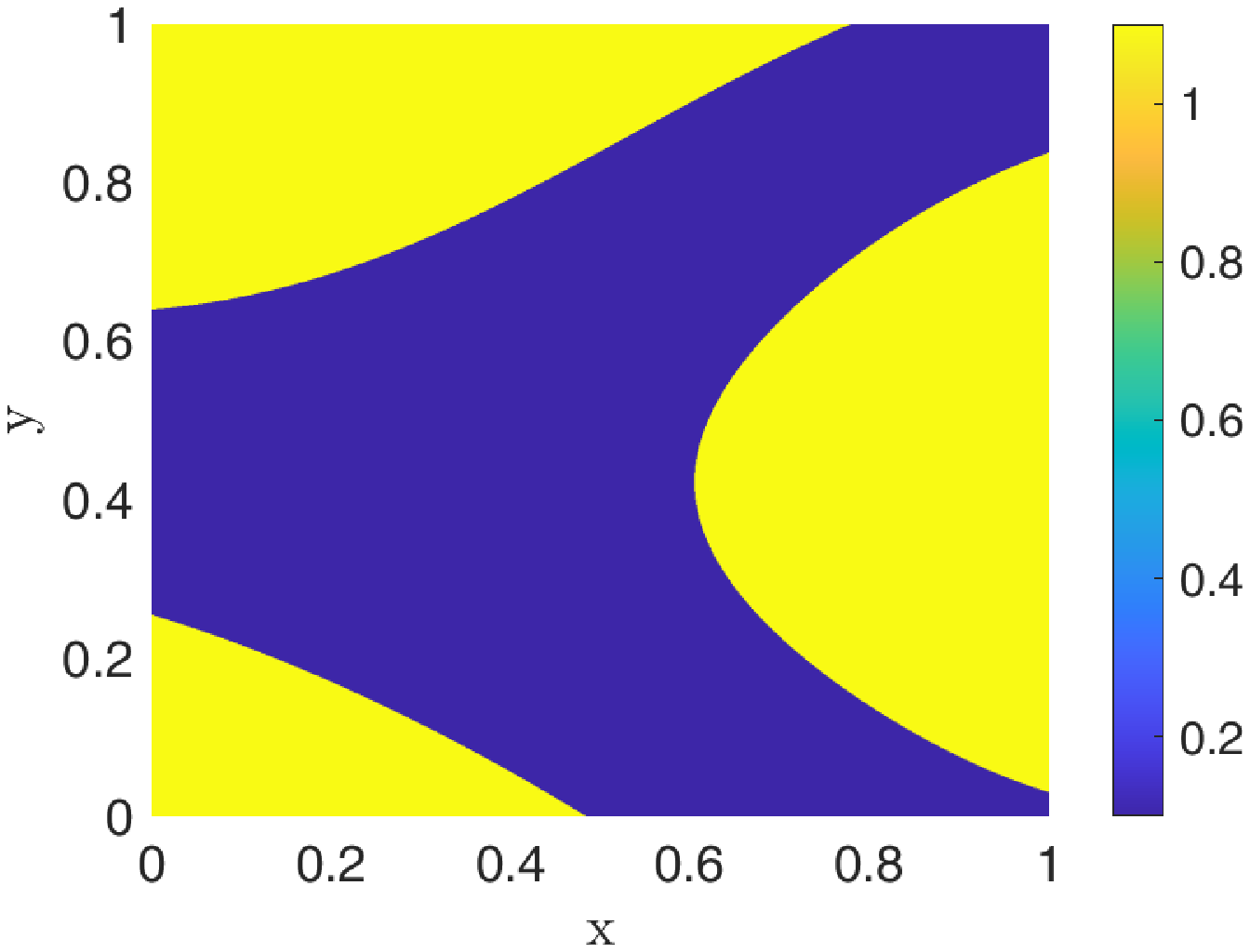}}
	\subfigure{\includegraphics[scale=0.45]{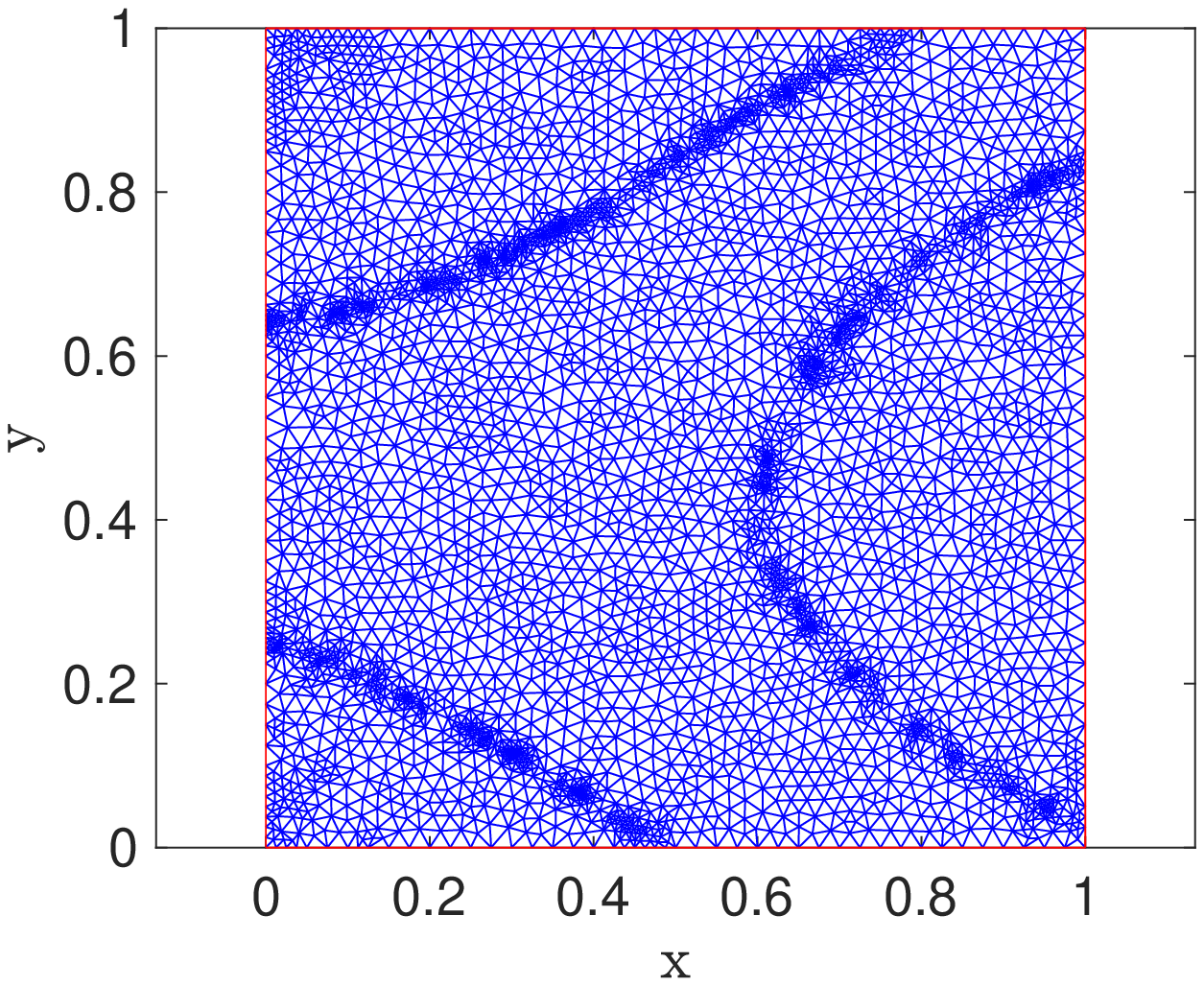}}\\
	\subfigure{\includegraphics[scale=0.45]{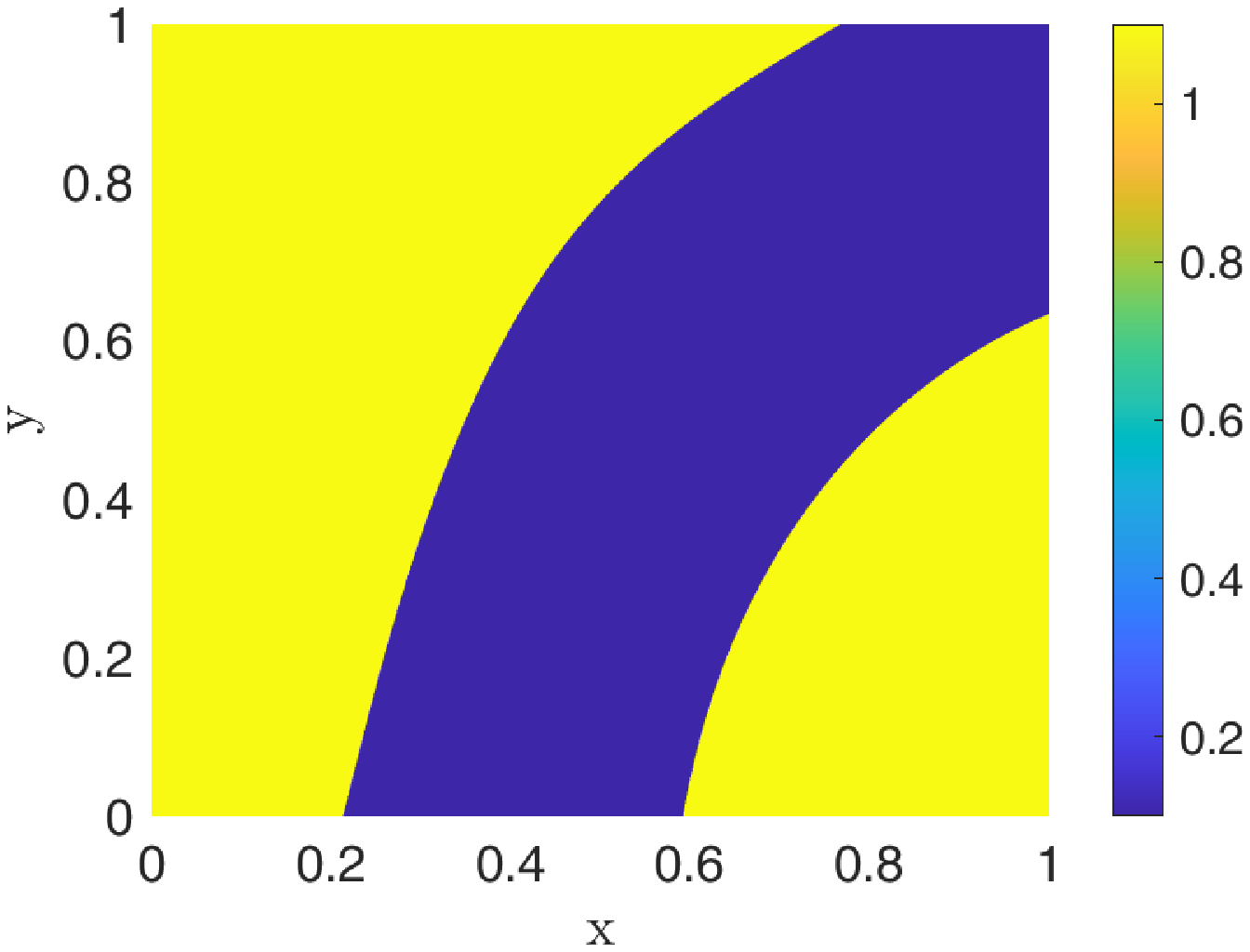}}
	\subfigure{\includegraphics[scale=0.45]{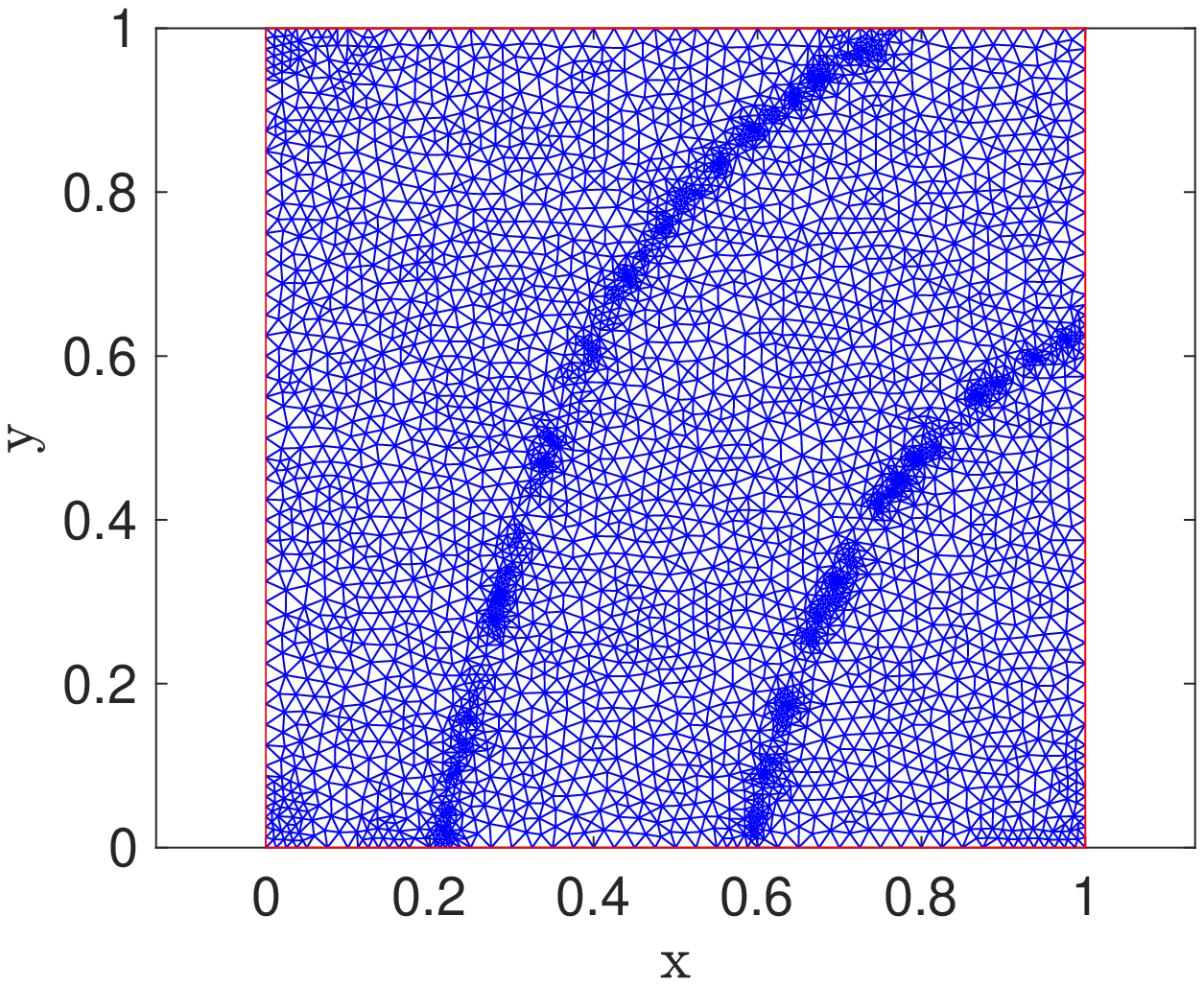}}\\
	\subfigure{\includegraphics[scale=0.45]{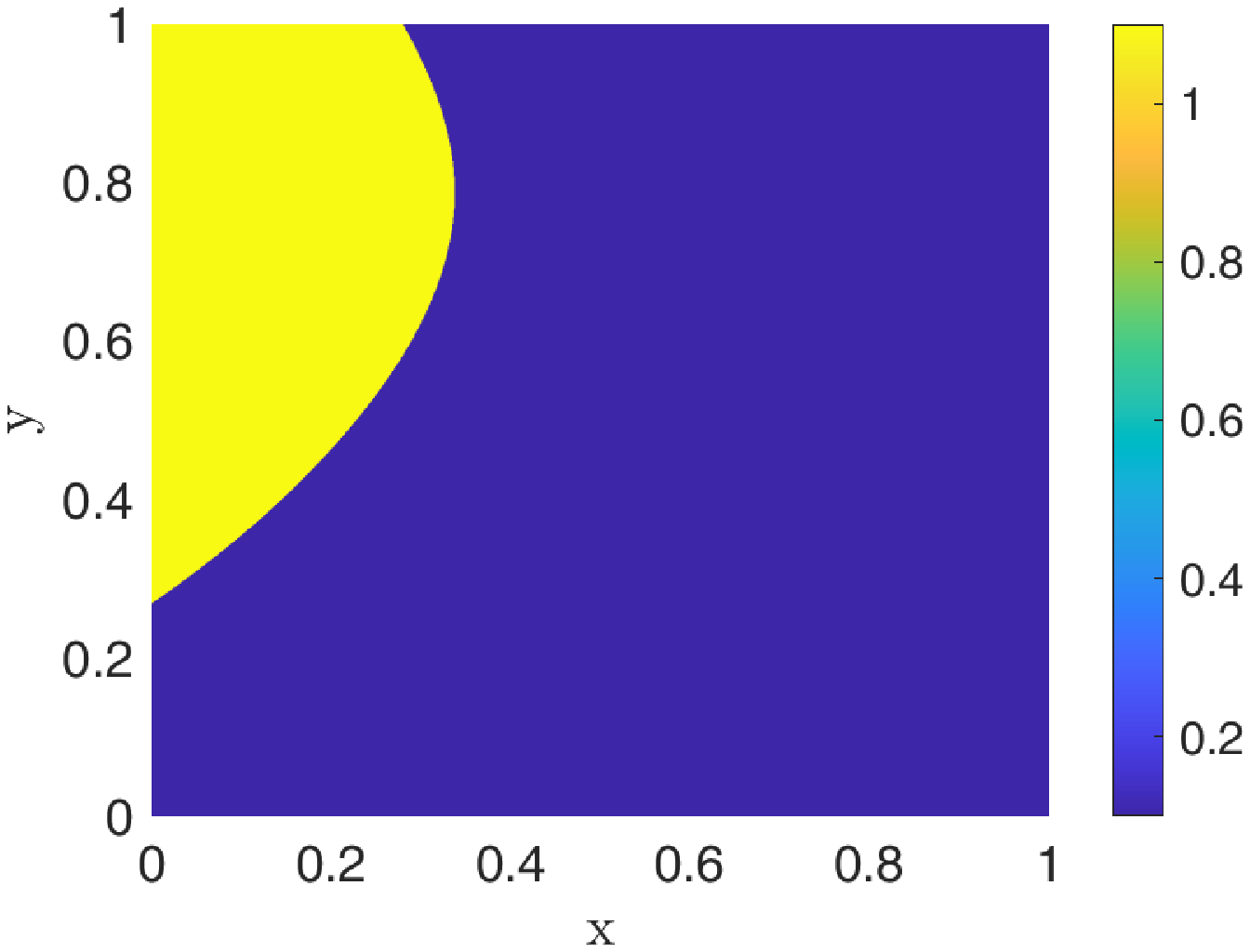}}
	\subfigure{\includegraphics[scale=0.45]{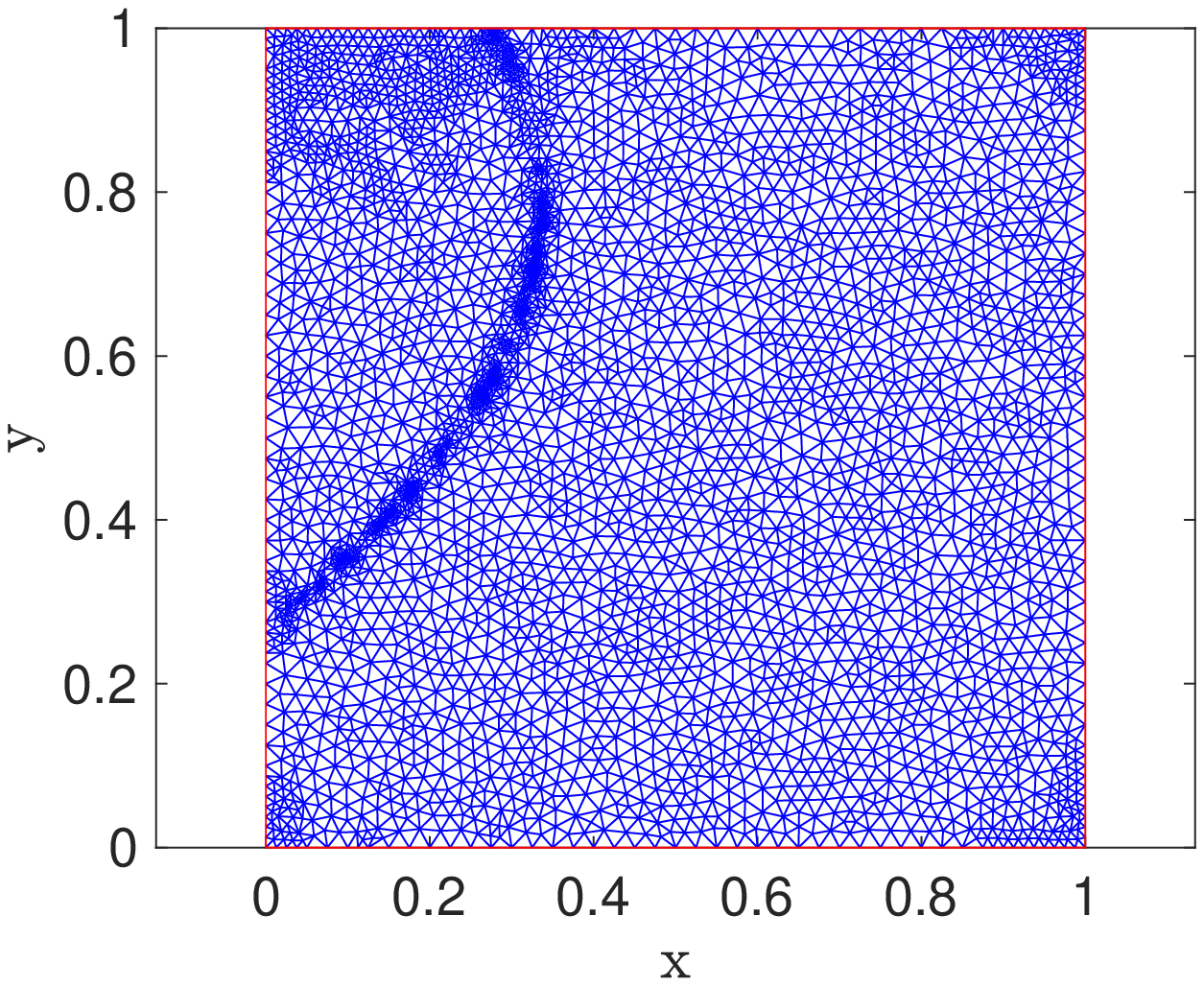}}
	\caption{Samples of the diffusion coefficient $a$ (left) and adaptive triangulations (right).}\label{Fig:AdaptMesh}
\end{figure}
Figure \ref{Fig:AdaptMesh} shows the three samples of the diffusion coefficient and the corresponding adaptive triangulations. It is nicely illustrated, how the element-wise a-posteriori estimation of the error according to Equation~\eqref{EQ:ErrContributionTriangle} enables an identification of the triangles which lie near the discontinuities of the diffusion coefficient and, hence, have a comparatively high error contribution. This results in a local refinement leading to a higher mesh resolution near the discontinuity. 
\end{example}  

\subsection{Numerical experiments for the random elliptic PDE}\label{subsec:NumExEllPDE}
    
In this section, we present numerical examples for the experimental investigation of the existence of a positive parameter $\kappa_a>0$ such that Equation~\eqref{EQ:ConvFEM} holds. Further, in the presented examples, we compare the performance of a standard FE discretization with the FE approach using the local refinement strategy described in Subsection \ref{subsubsec:AFEM}. We consider the domain $\mathcal{D}=(0,1)^2$ and set $f\equiv 10$, $\overline{a}\equiv 0.1$, $\Phi_1 = 0.1\,\exp(\cdot)$, $\Phi_2=|\cdot|$ , $F = \min(|\cdot|, 30)$ in Equation~\eqref{EQ:DiffCoeffDefi}. $W_1$ and $W_2$ are centered GRFs with squared exponential covariance function (see Subsection \ref{subsec:NumExCov}), where we set $\sigma=0.5$ and $r=1$. We use the circulant embedding method (see cf.  \cite{AnalysisOfCirculantEmbeddingMethodsForSamplingStationaryRandomFields} and  \cite{CirculantEmbeddingWithWMCAnalysisForEllipicPDEWithLognormalCoefficients}) to draw samples of the GRFs $W_1,~W_2$ on an equidistant grid with stepsize $h_W=1/200 = 0.005$ and obtain evaluations everywhere on the domain by multilinear interpolation. Due to the high spatial regularity of $W_1$ and $W_2$ and the high correlation length, this stepsize is fine enough to ensure that the approximation error of the GRFs $W_1,~W_2$ is negligible.  The Lévy process $l$ is set to be a Poisson process with intensity parameter $\lambda>0$, which will be specified later. It follows that $l(t)-l(s)\sim Poiss(\lambda(t-s))$ for $0\leq s\leq t$ and we draw samples from the Poisson process by the Uniform Method (see \cite[Section 8.1.2]{LevyProcessesInFinance}). 

The goal of our numerical experiments is to approximate the strong error:
We set $h_\ell=0.25\cdot 2^{-(\ell-1)}$, for $\ell=1,\dots, 8$ and approximate the left hand side of Equation~\eqref{EQ:ConvFEM} by 
\begin{align}\label{EQ:EllPDERMSEDefi}
RMSE^2 := \|u-u_\ell\|_{L^2(\Omega;V)}^2 \approx \frac{1}{M}\sum_{i=1}^M \|u_{ref}^{(i)}-u_\ell^{(i)}\|_V^2,
\end{align} 
for $\ell = 1,\dots,5$, where $(u_{ref}^{(i)}-u_\ell^{(i)}, ~i=1,\dots,M)$ are i.i.d. copies of the random variable $u_{ref}-u_\ell$ and $M\in\mathbb{N}$. We use a reference grid on $\mathcal{D}$ with $801$ grid points for interpolation and prolongation and take $u_{ref}:=u_{8}$ as the pathwise reference solution. The RMSE is estimated for the standard FE method and for the FE method with adaptive refinement as described in Subsection \ref{subsubsec:AFEM}. In order to obtain comparable approximations on each FE level, we compute the adaptive meshes as follows: for $\ell\in\{1,\dots,5\}$, we denote by $n_\ell$ the number of triangles in the non-adaptive triangulation with FE mesh refinement parameter $h_\ell$. The (sample-dependent) adaptive mesh on level $\ell$ is obtained by performing the local refinement procedure described in Subsection \ref{subsubsec:AFEM} until the number of triangles in the adaptive mesh exceeds $n_\ell$. The resulting mesh is then used to compute the adaptive FE approximation on level $\ell$.
  
\subsubsection{Homogeneous Dirichlet boundary conditions}
In our first experiment, we choose homogeneous Dirichlet boundary contitions on $\partial \mathcal{D}$ and set $\lambda=2$. Figure \ref{Fig:GSLPPDEHBCSamples} shows samples of the diffusion coefficient and the corresponding PDE solutions approximated by the FE method.
\begin{figure}[ht]
	\centering
	\subfigure{\includegraphics[scale=0.24]{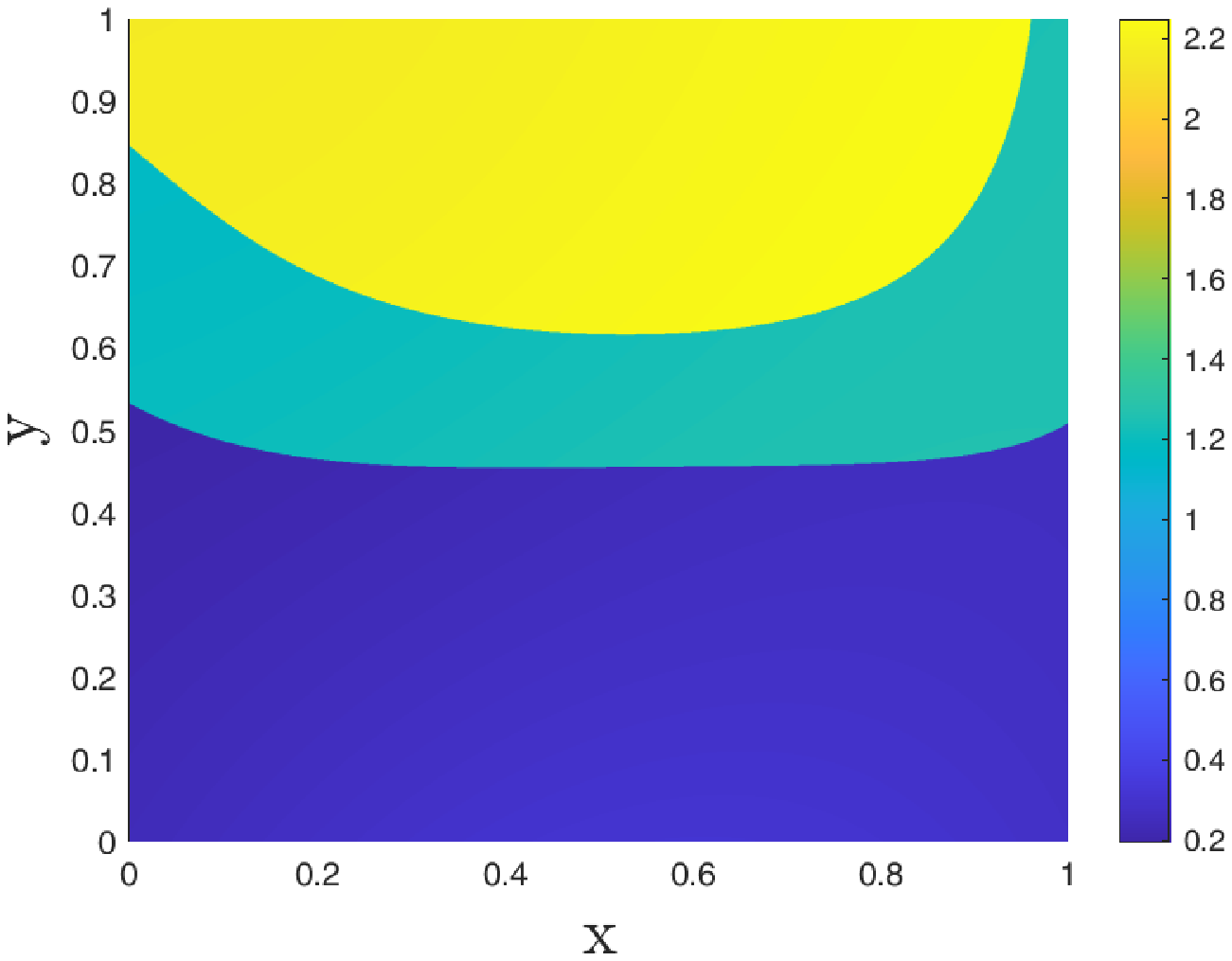}}
		\subfigure{\includegraphics[scale=0.24]{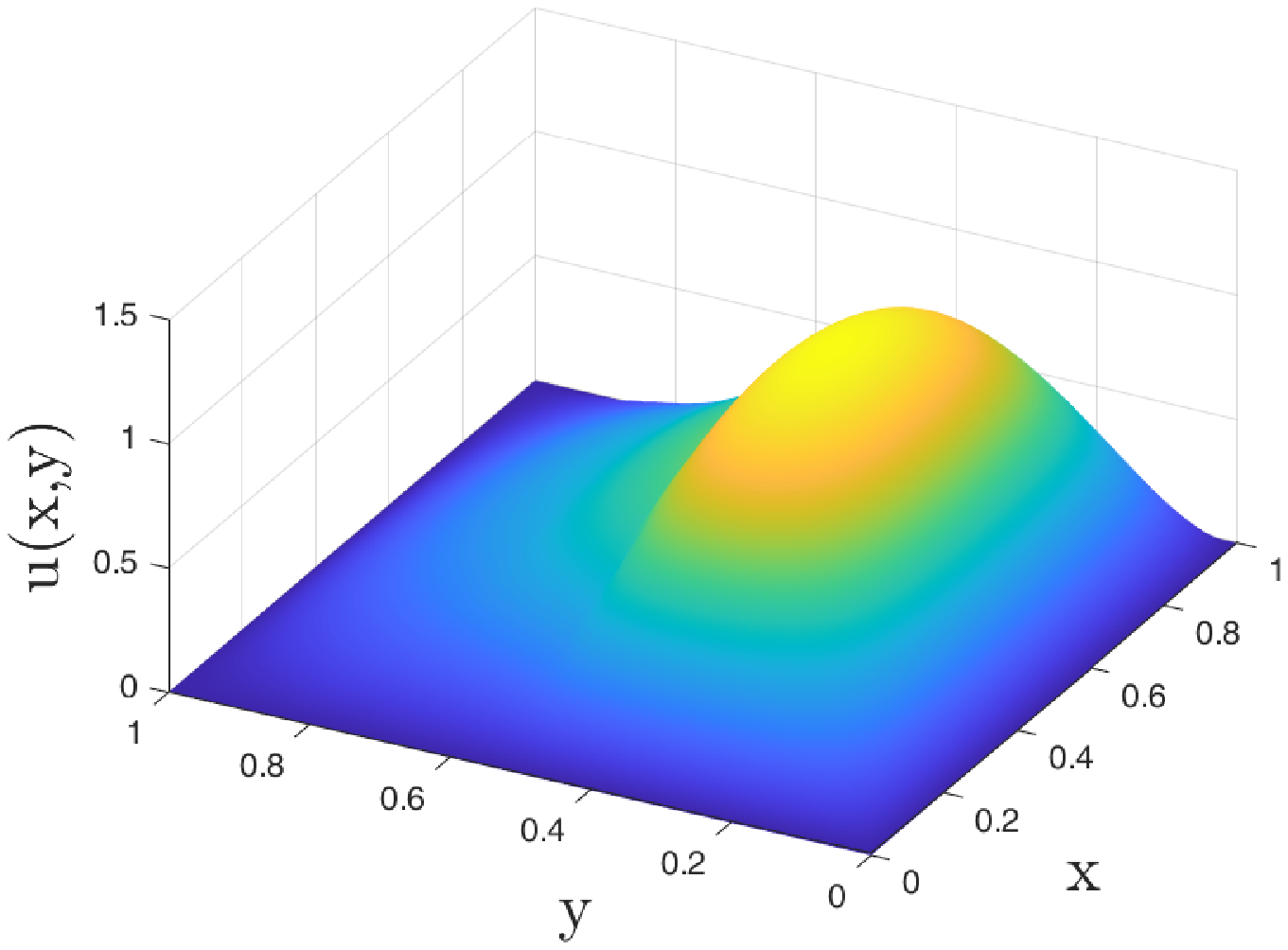}}
		\subfigure{\includegraphics[scale=0.24]{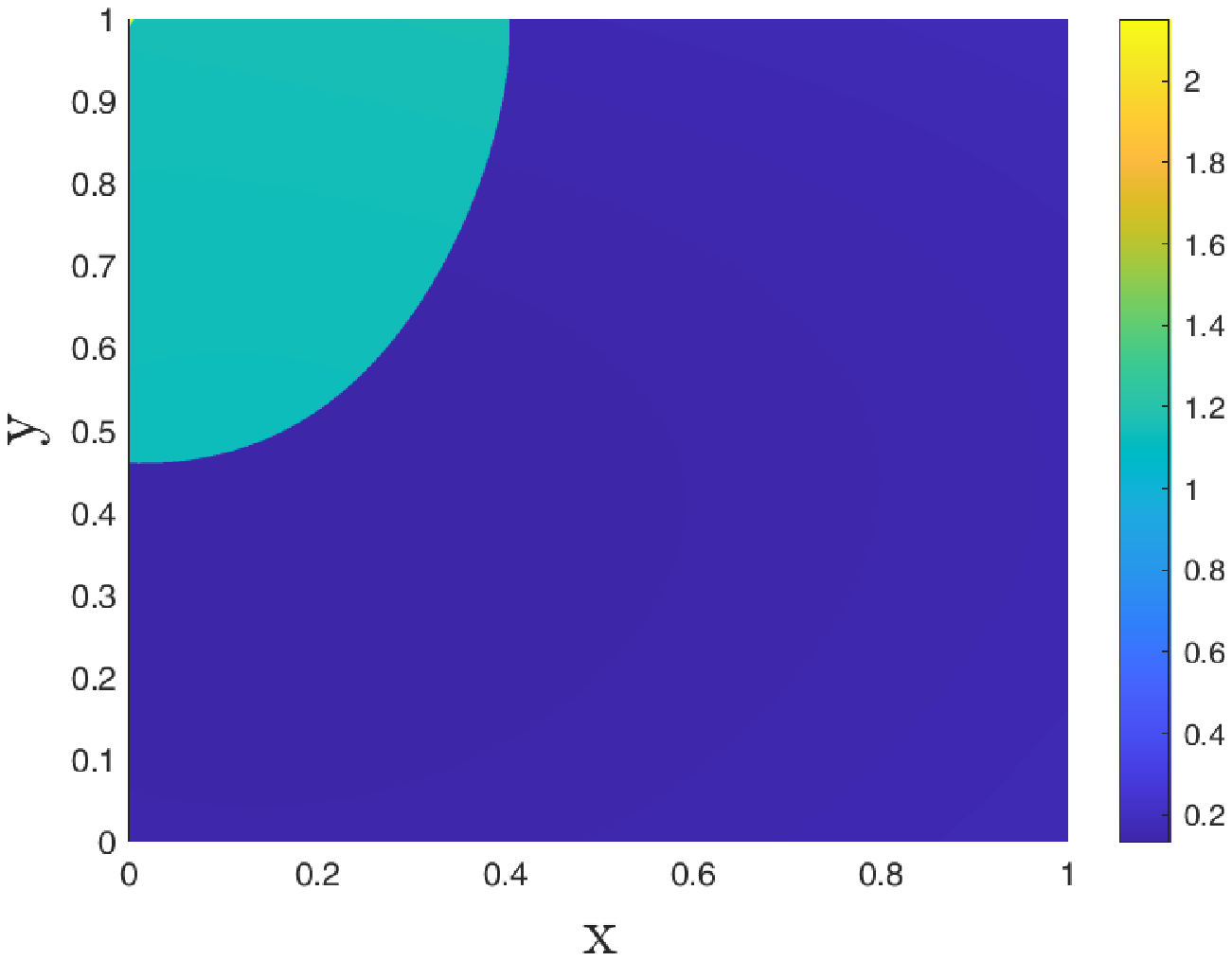}}
		\subfigure{\includegraphics[scale=0.24]{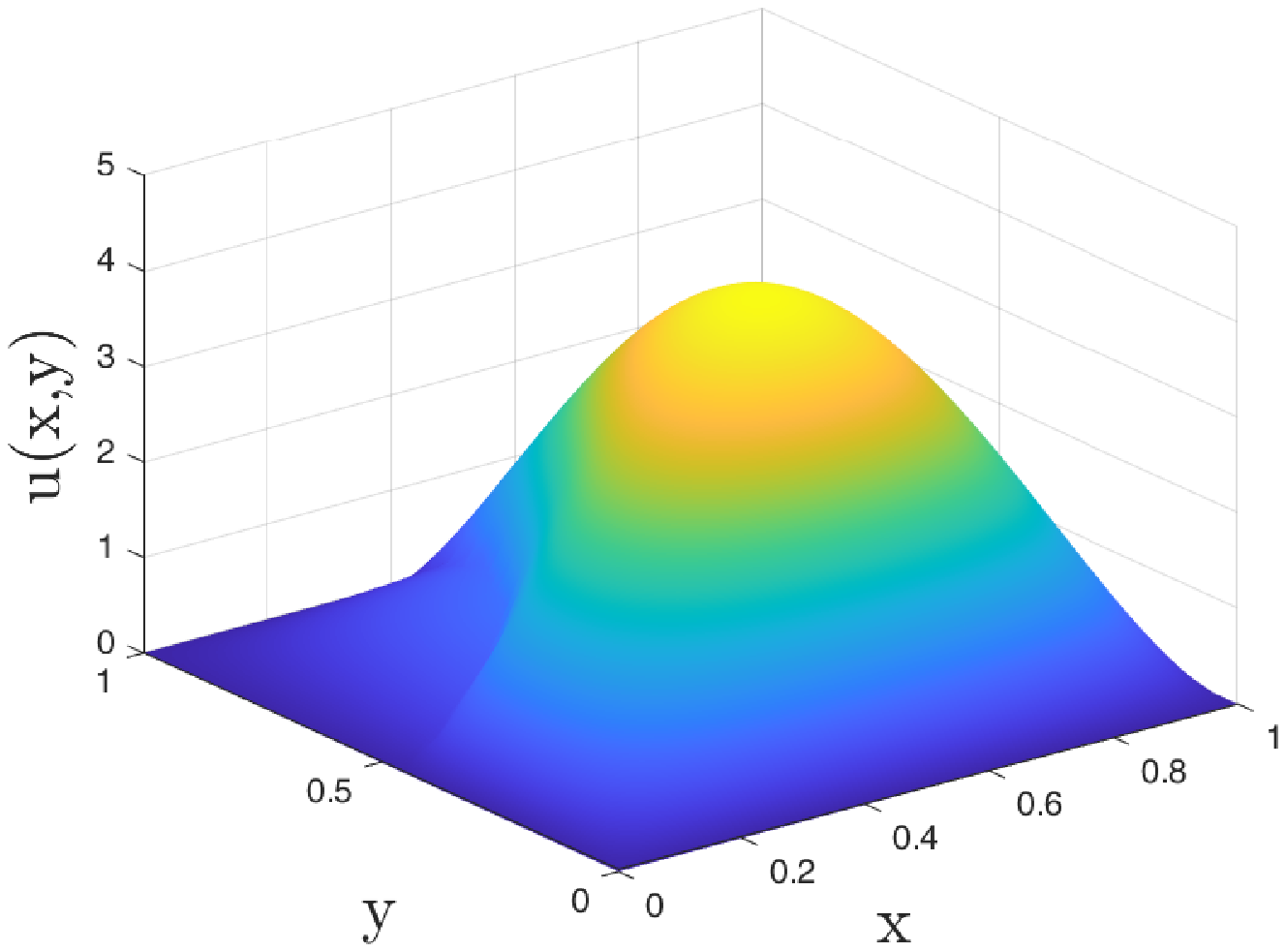}}
	\caption{Samples of the diffusion coefficient with $\lambda=2$ and corresponding FE solution to the elliptic PDE with homogeneous Dirichlet boundary conditions.}\label{Fig:GSLPPDEHBCSamples}
\end{figure}
We use $M=150$ samples so approximate the RMSE according to Equation~\eqref{EQ:EllPDERMSEDefi} with the non-adaptive and the adaptive FE approach. Figure \ref{Fig:ConvFEDBCP2} shows the approximated RMSE plotted against the inverse FE refinement parameter. For the adaptive FE approximations, the sample-dependent mesh refinement parameters on each discretization level are averaged over all $150$ samples.  We see a convergence with rate $\approx 0.65$ for the standard FE method, which is in line with our expectations (see Section \ref{subsec:FEMethod}). Further, we observe that the adaptive refinement strategy yields an improved convergence rate of $\approx 0.85$ and a smaller estimated RMSE on the considered levels.
\begin{figure}[ht]
	\centering
	\subfigure{\includegraphics[scale=0.45]{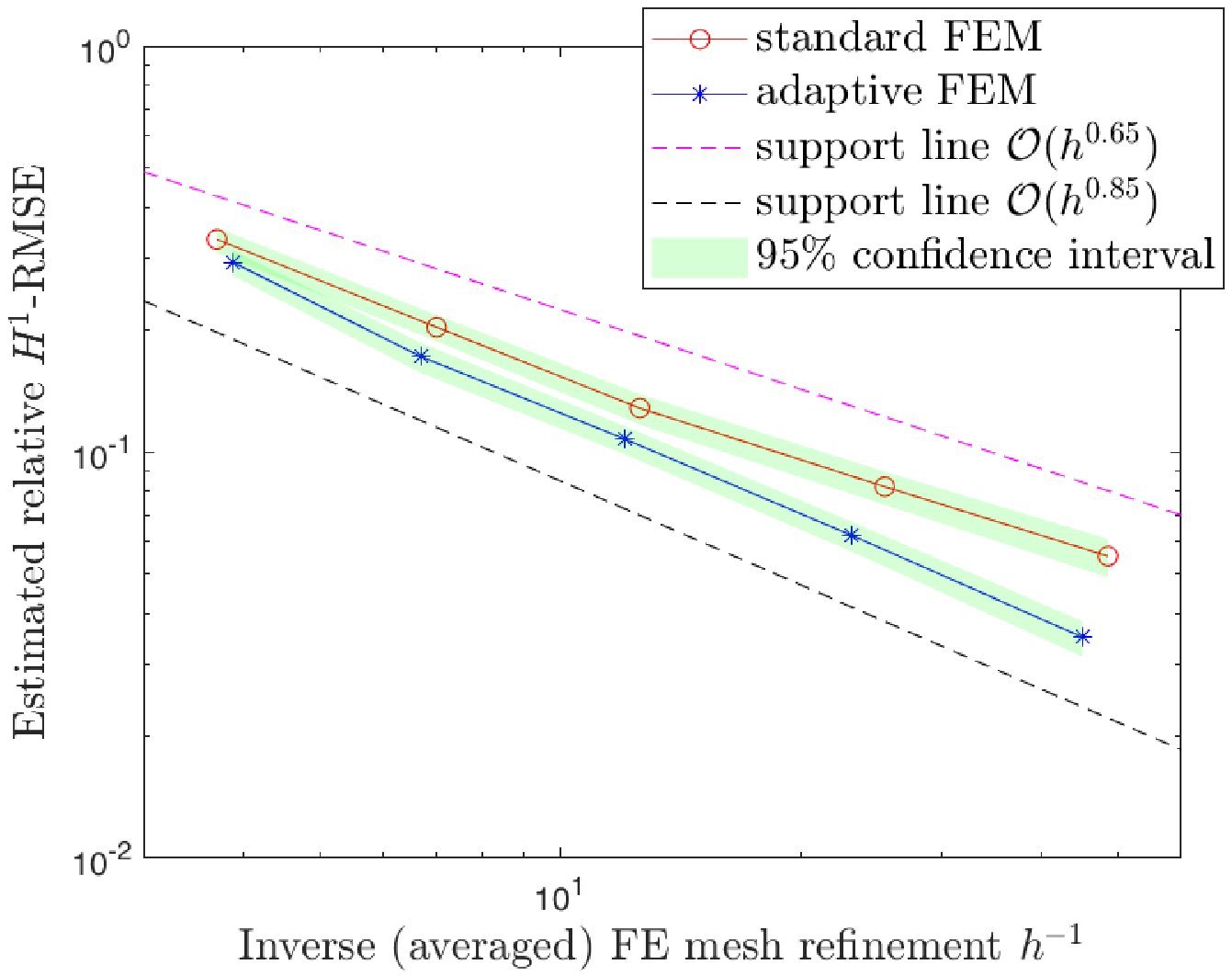}}
		\subfigure{\includegraphics[scale=0.45]{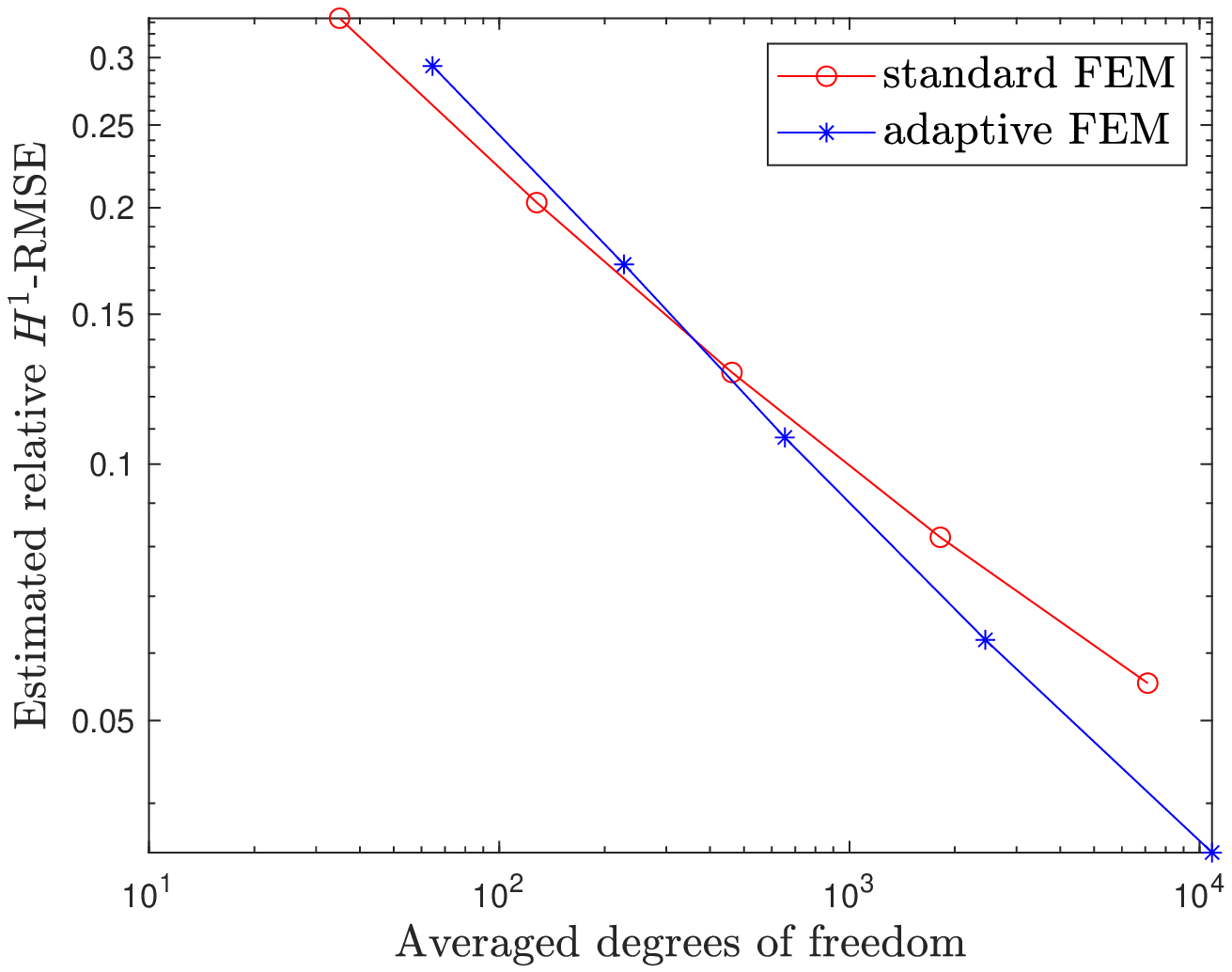}}
	\caption{Convergence of the (standard and adaptive) FE method (left) and degrees-of-freedom to error plot (right) where $l$ is a Poisson(2)-process and we impose homogeneous Dirichlet boundary conditions.}\label{Fig:ConvFEDBCP2}
\end{figure}
The right hand side of Figure \ref{Fig:ConvFEDBCP2} shows the estimated RMSE plotted against the degrees of freedom of the linear FE-system on each discretization level. For the adaptive FE method, the degrees of freedom are averaged over all samples. After a pre-asymptotic behaviour on the first and second level, we see that the adaptive FE method performs more efficient in terms of the degrees of freedom: the adaptive FE method achieves a certain RMSE with less degrees of freedom compared to the standard FE method.

\subsubsection{Mixed Dirichlet-Neumann boundary conditions}
In the second numerical example, we use mixed Dirichlet-Neumann boundary conditions: we split the domain boundary $\partial\mathcal{D}$ by $\Gamma_1=\{0,1\}\times[0,1]$ and $\Gamma_2=(0,1)\times\{0,1\}$ and impose the following pathwise boundary conditions
\begin{align*}
u(\omega,\cdot)=\begin{cases} 0.1  & ~on~ \{0\}\times [0,1] \\ 0.3 &~on~\{1\}\times [0,1]  \end{cases} \text{ and } a\overrightarrow{n}\cdot\nabla u=0 \text{ on } \Gamma_2,
\end{align*}
for $\omega\in\Omega$. Further, we set $\lambda=3$, which results in a higher number of jumps in the diffusion coefficient. Figure \ref{Fig:GSLPPDEMBCSamples} shows samples of the diffusion coefficient and the corresponding PDE solutions approximated by the FE method.
\begin{figure}[ht]
	\centering
	\subfigure{\includegraphics[scale=0.24]{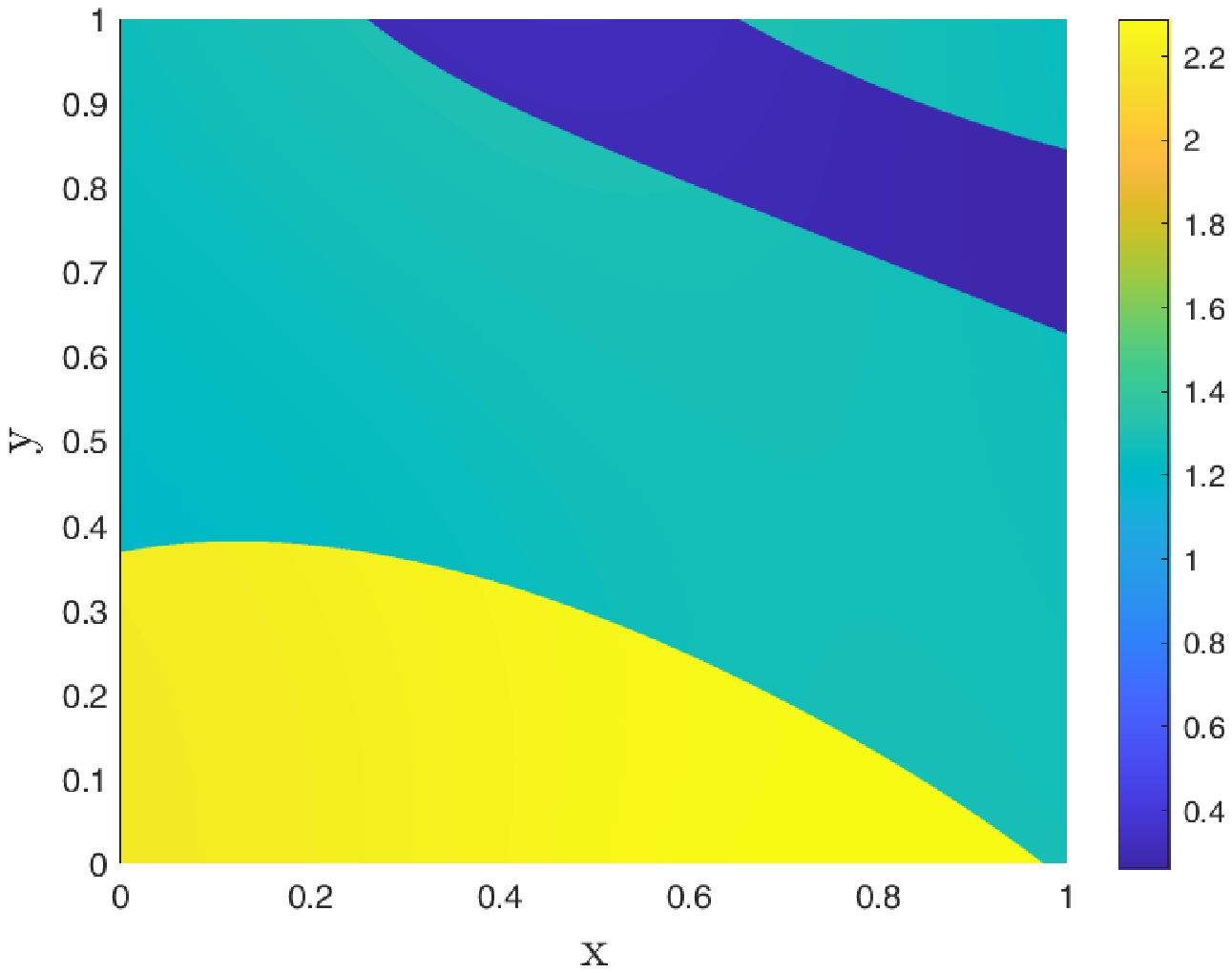}}
		\subfigure{\includegraphics[scale=0.24]{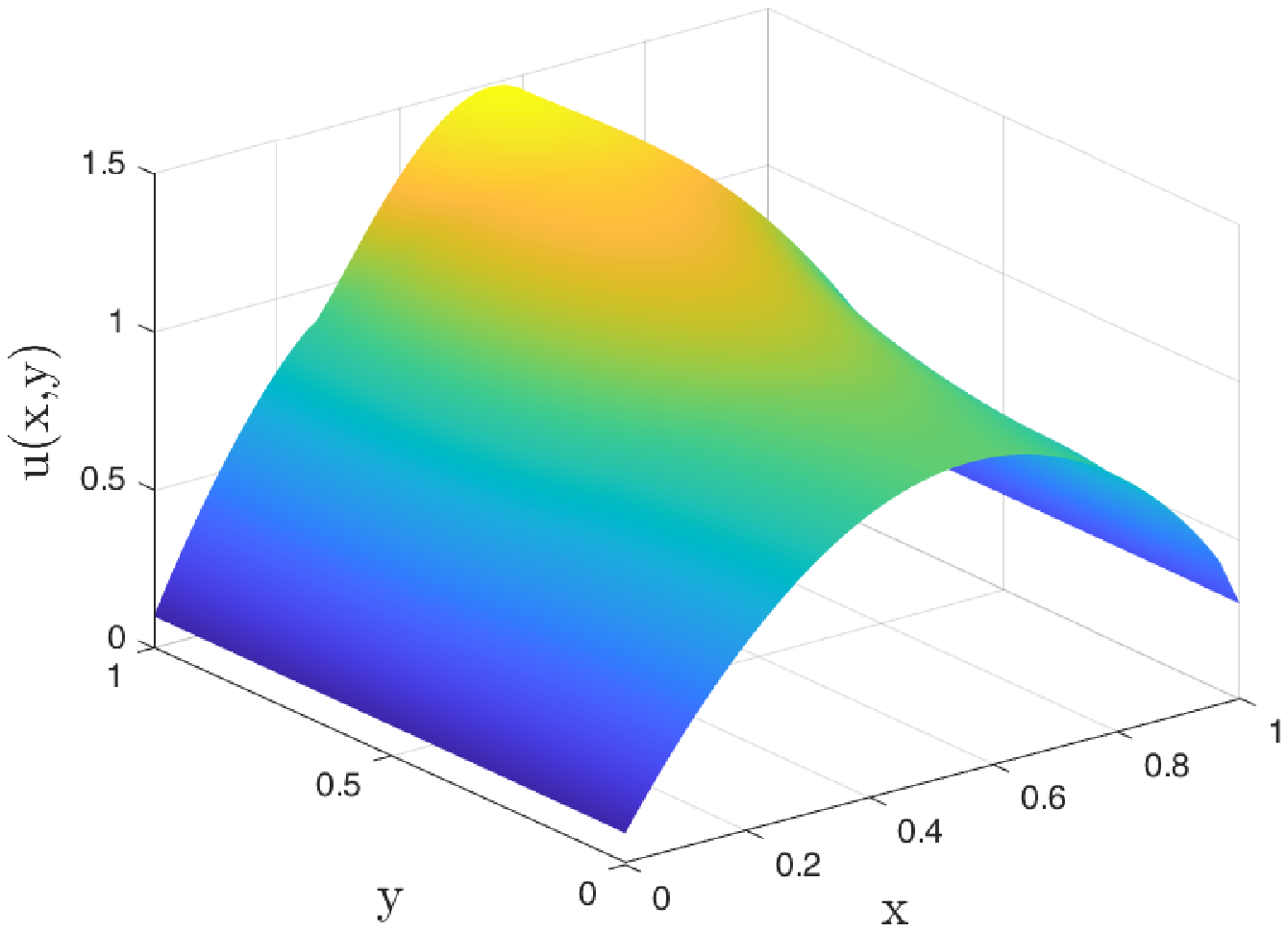}}
		\subfigure{\includegraphics[scale=0.24]{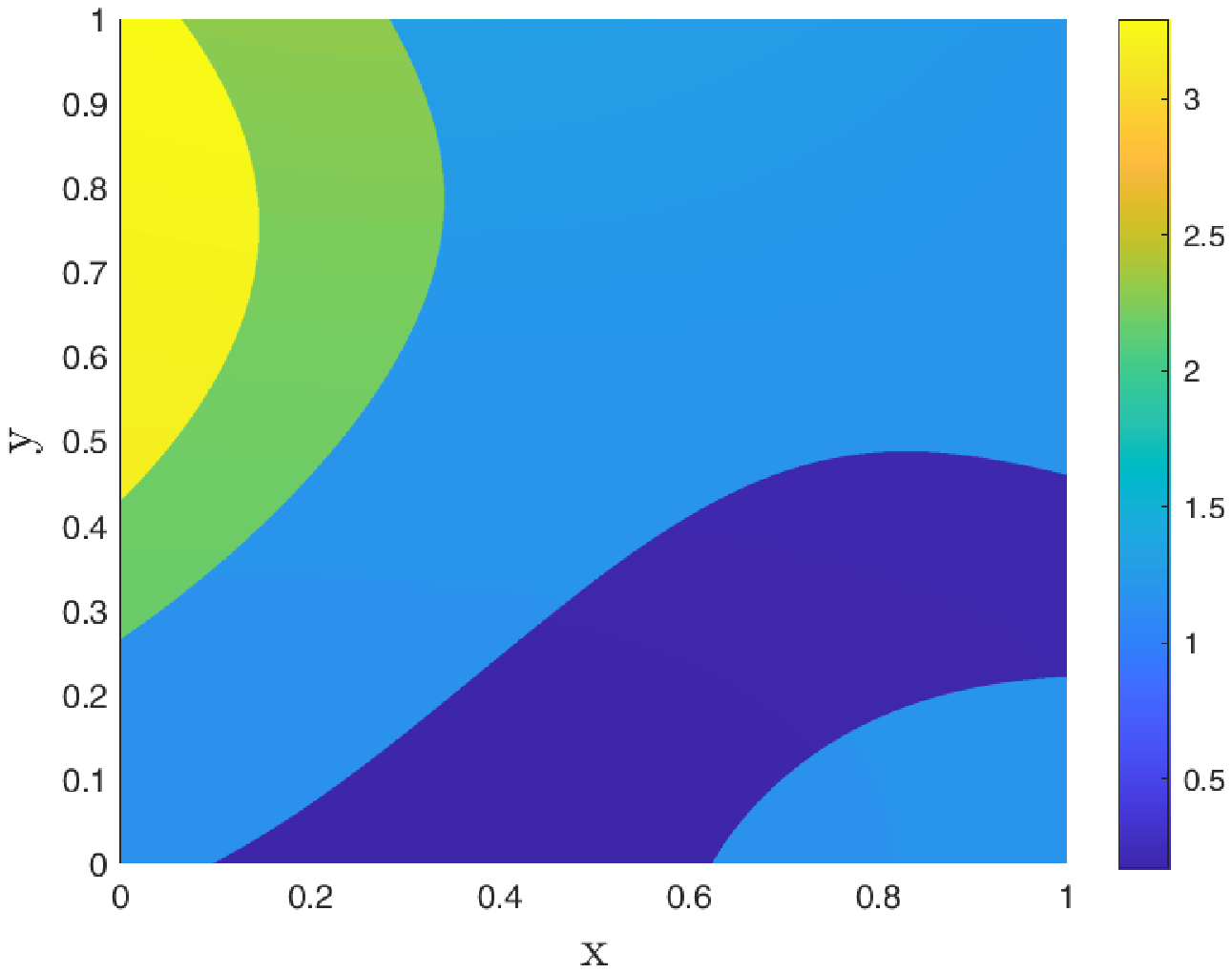}}
		\subfigure{\includegraphics[scale=0.24]{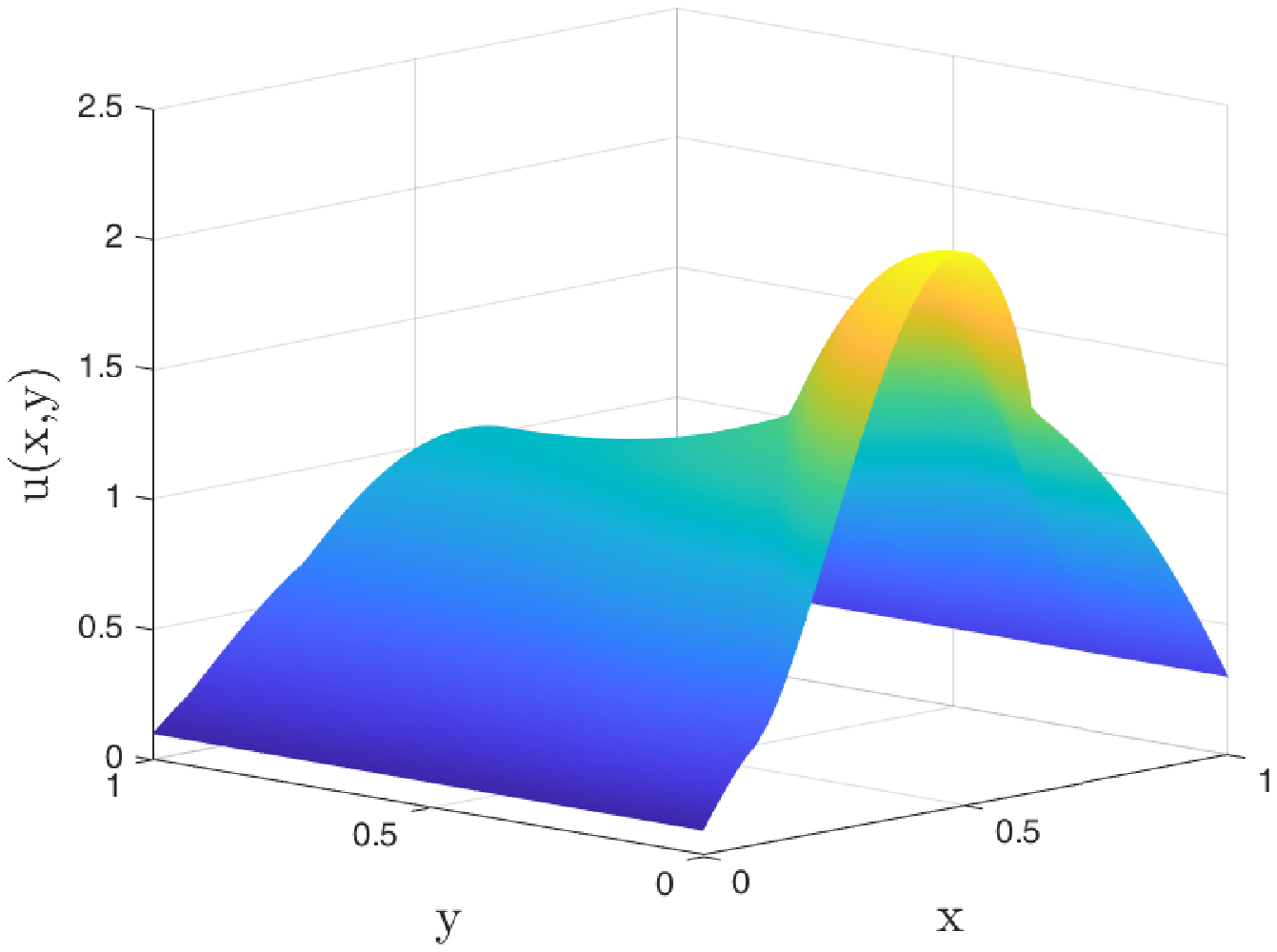}}
	\caption{Samples of the diffusion coefficient with $\lambda=3$ and corresponding FE solution to the elliptic PDE with mixed Dirichlet-Neumann boundary conditions.}\label{Fig:GSLPPDEMBCSamples}
\end{figure}
As in the first experiment, we use $M=150$ samples to approximate the RMSE according to \eqref{EQ:ConvFEM} with the standard FE approach and the adaptive approach. The approximated values are plotted against the  inverse FE refinement parameter. The results are presented in Figure \ref{Fig:ConvFEMBCP3}, which shows a convergence rate of $\approx 0.6$ for the standard FE approach, which is slightly smaller than the observed convergence rate of $\approx 0.65$ in the first numerical example, where we imposed homogeneous Dirichlet boundary conditions and considered a Poisson process with intensity parameter $\lambda = 2$, leading to a smaller expected number of jumps in the diffusion coefficient. Further, Figure \ref{Fig:ConvFEMBCP3} shows a convergence rate of $\approx 0.85$ for the adaptive FE approach and smaller magnitudes of the RMSE compared to the standard FE method. The right plot of Figure \ref{Fig:ConvFEMBCP3} reveals the higher efficiency of the adaptive FE method compared to the standard approach in the sense that the number of degrees of freedom, which are necessary to achieve a certain error, is significantly smaller compared to the standard FE approach. Further, we see that the performance difference of the standard and the adaptive FE method is larger compared to the first example due to the higher number of expected jumps in the diffusion coefficient, which renders the adaptive local refinement strategy even more suitable for this problem. Overall, the results are in line with our expectations. 
\begin{figure}[ht]
	\centering
	\subfigure{\includegraphics[scale=0.45]{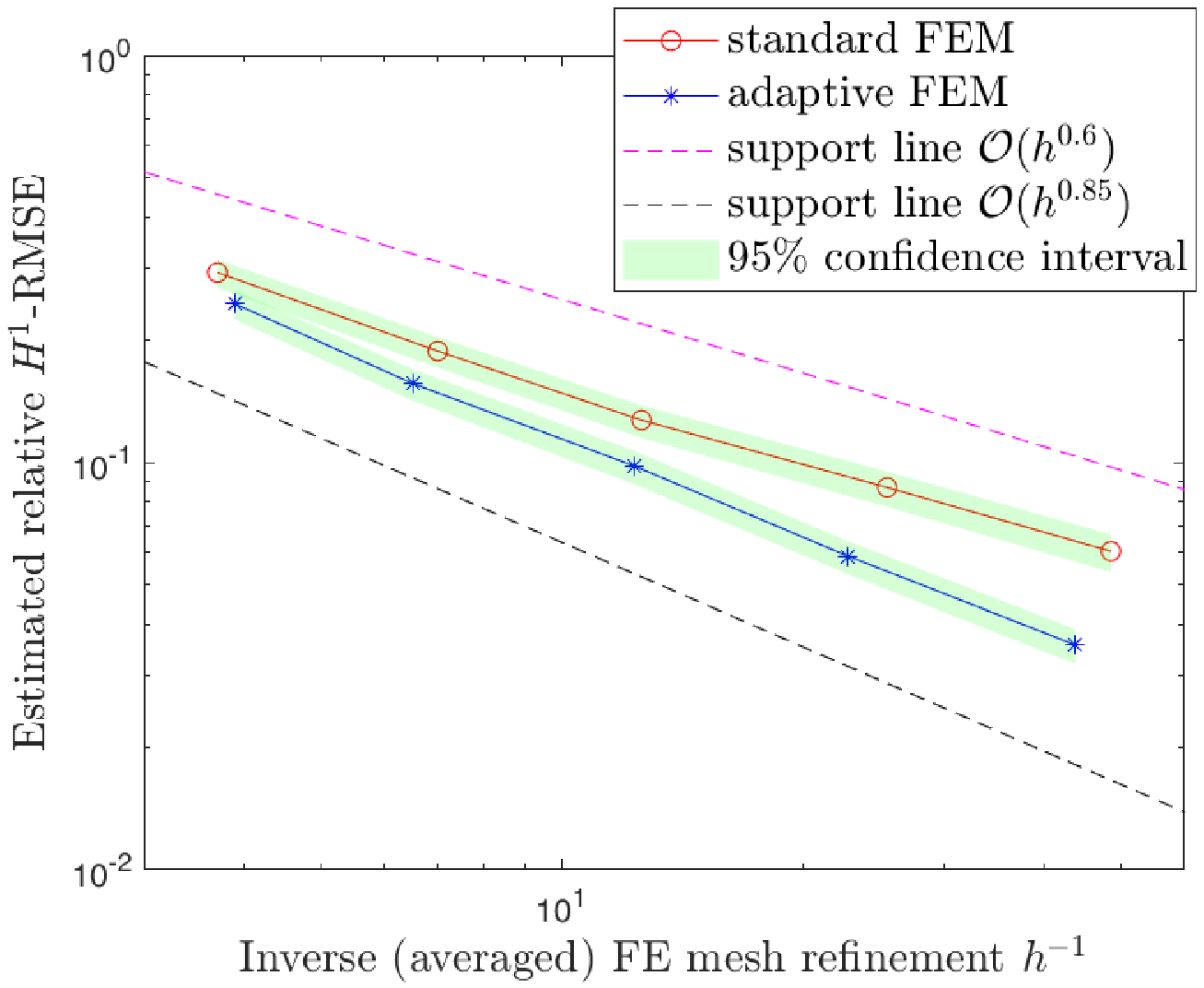}}
		\subfigure{\includegraphics[scale=0.45]{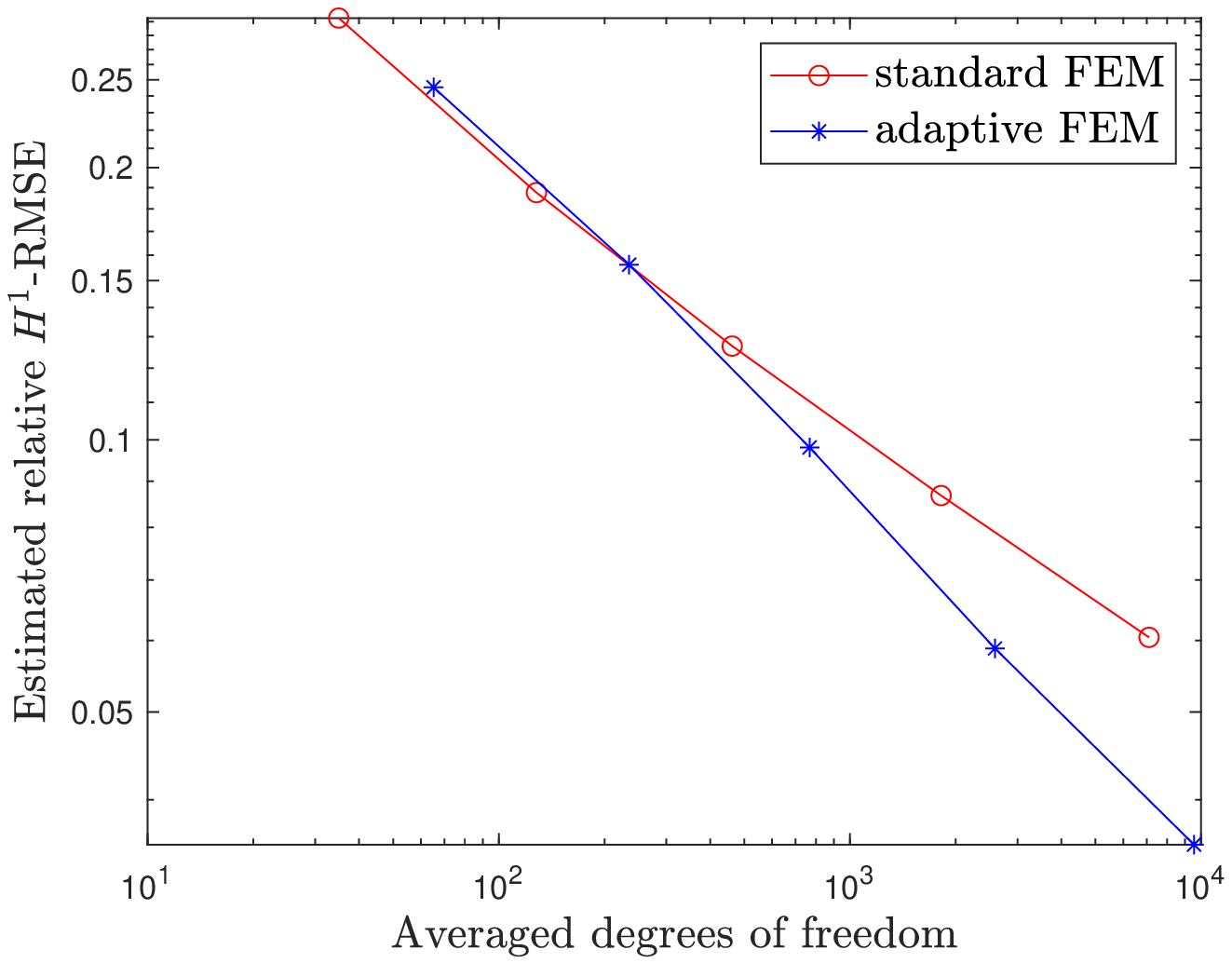}}
	\caption{Convergence of the (standard and adaptive) FE method (left) and degrees-of-freedom to error plot (right) where $l$ is a Poisson(3)-process and we impose mixed Dirichlet-Neumann boundary conditions.}\label{Fig:ConvFEMBCP3}
\end{figure}

\subsubsection{Mixed Dirichlet-Neumann boundary conditions and jump-accentuated coefficients}
In our last experiment we consider mixed Dirichlet-Neumann boundary conditions as in the previous section. The diffusion coefficient is set to be 
\begin{align*}
a(x,y) = 0.01 + 0.01\,\exp(W_1(x,y)) + 30\,l(F(W_2(x,y)),
\end{align*}
for $(x,y)\in\mathcal{D}$, where $l$ is a Poisson(4)-process. This leads to a jump-accentuated coefficient with high contrast. We take $M=500$ samples to estimate the RMSE for the standard FE method and the adaptive approach according to \eqref{EQ:EllPDERMSEDefi}. The results are presented in Figure \ref{Fig:ConvFEMBCP4}.
\begin{figure}[ht]
	\centering
	\subfigure{\includegraphics[scale=0.45]{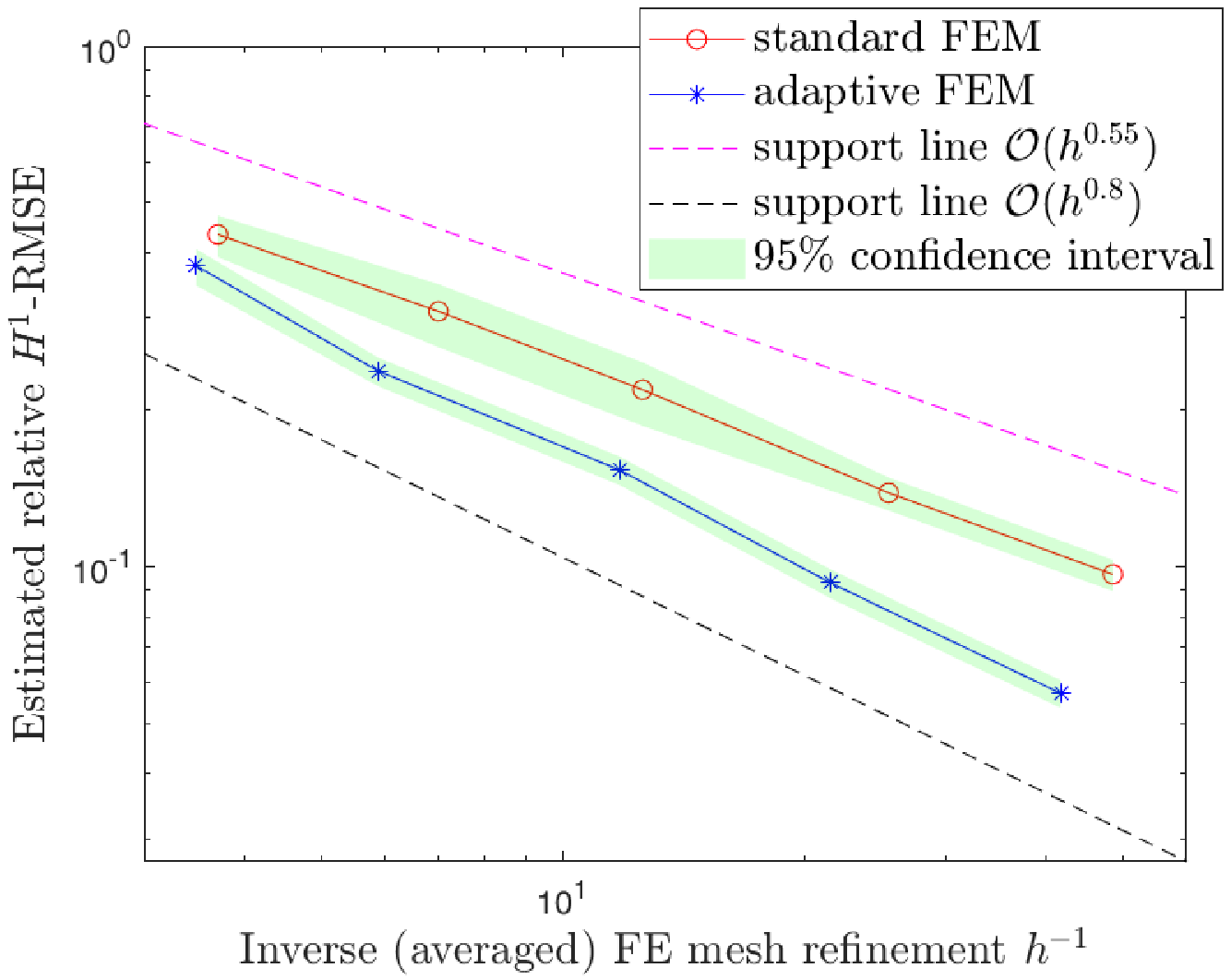}}
		\subfigure{\includegraphics[scale=0.45]{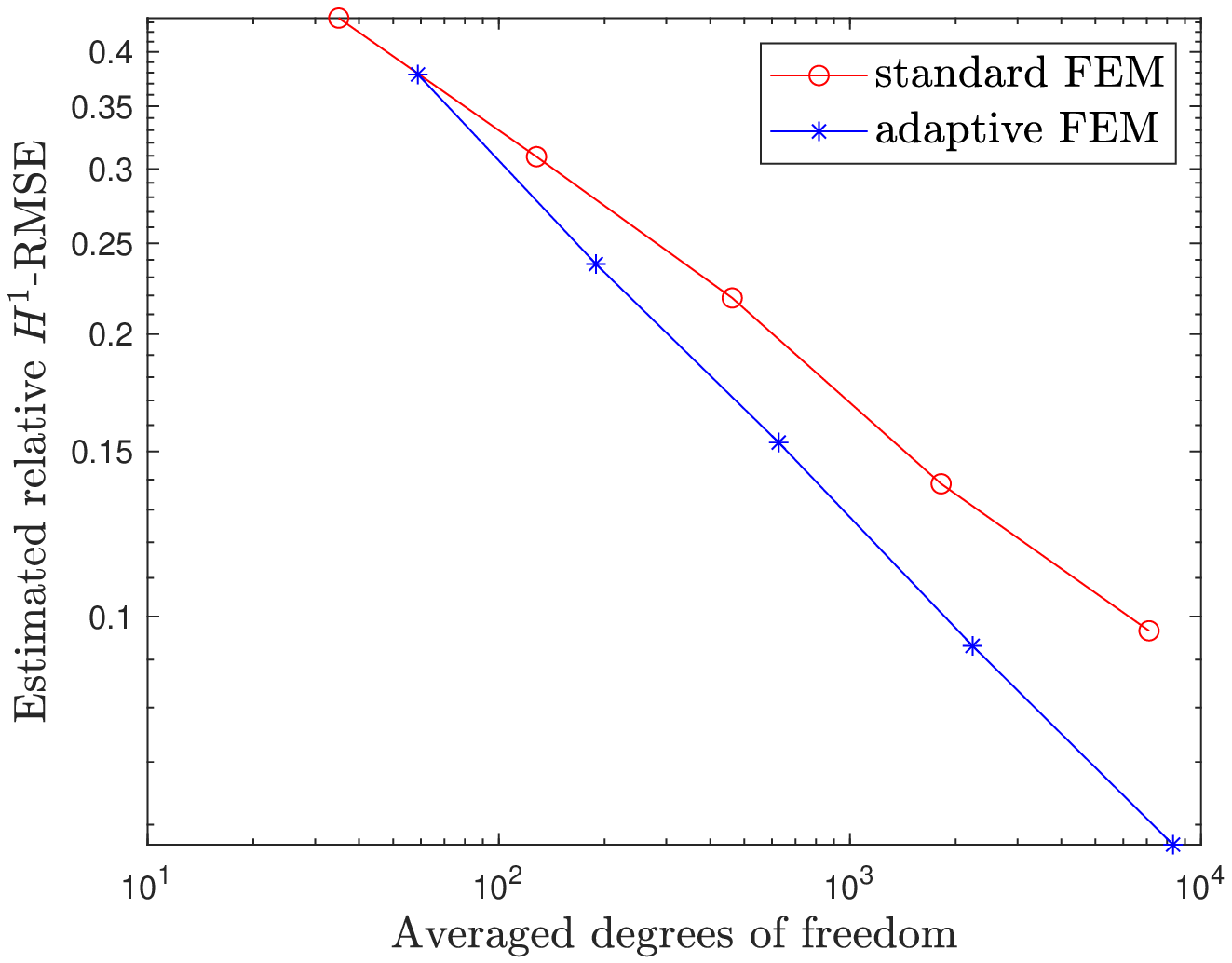}}
	\caption{Convergence of the (standard and adaptive) FE method (left) and degrees-of-freedom to error plot (right) where $l$ is a Poisson(4)-process, we impose mixed Dirichlet-Neumann boundary conditions and use a high-contrast diffusion coefficient.}\label{Fig:ConvFEMBCP4}
\end{figure} 
We obtain a convergence rate of $\approx 0.55$ for the standard FE approach and $\approx 0.8$ for the adaptive FE method, which is slightly slower than the observed rates in the previous experiment. This is in line with our expectations since we have an increased jump intensity in the underlying Poisson process and larger jump heights in the diffusion coefficient, both having a negative influence on the convergence rate of the FE method (see also \cite{SGRFPDE, AStudyOfElliptic, RegularityResultsForLaplaceInterfaceProblemsInTroDimensions}). Figure \ref{Fig:ConvFEMBCP4} also reveals, as expected, that the magnitude of the RMSE is significantly smaller in the adaptive FE approach. The right plot of Figure \ref{Fig:ConvFEMBCP4} demonstrates that the adaptive refinement strategy is able to achieve a certain error with significantly less degrees of freedom in the corresponding linear FE system, than the standard FE approach.


	\section*{Acknowledgments}
	Funded by Deutsche Forschungsgemeinschaft (DFG, German Research Foundation) under Germany's Excellence Strategy - EXC 2075 - 390740016.
	

	\bibliographystyle{siam}
	\bibliography{references}

\end{document}